\documentclass[reqno]{amsart}
\usepackage{microtype}
\usepackage[sortcites=true]{biblatex}
\usepackage{links}
\usepackage{graphicx}
\usepackage{amsmath,amssymb,amsfonts}
\usepackage{amsthm}
\usepackage[title]{appendix}
\usepackage{xcolor}
\usepackage{textcomp}
\usepackage{booktabs}
\usepackage{standard}
\usepackage{mathtools}
\usepackage[nameinlink]{cleveref}
\usepackage{subcaption}

\newcommand{\ER}{\R \cup \{+\infty\}}
\newcommand{\trace}{\gamma}

\DeclareMathOperator{\li}{Li}
\DeclareMathOperator{\dist}{dist}

\newtheorem{theorem}{Theorem}
\newtheorem{lemma}{Lemma}
\newtheorem{proposition}{Proposition}
\newtheorem{assumption}{Assumption}
\newtheorem{corollary}{Corollary}

\newtheorem{remark}{Remark}

\newtheorem{definition}{Definition}

\Crefname{equation}{}{}
\crefname{equation}{}{}

\title[Analysis of Bregman Proximal Point for the Obstacle Problem]{Analysis of Bregman Proximal Point \\ for the Obstacle Problem}
\author{Brendan Keith, Haojun Qin, Noe Reyes Rivas}
\keywords{Obstacle problem, Bregman proximal point method, entropic Poisson equation, Legendre function, convergence rates}

\begin{document}

\begin{abstract}
We study the Bregman proximal point method for the obstacle problem, a
fundamental variational inequality arising in contact mechanics, optimal
design, and mathematical finance. Each Bregman proximal step regularizes the
energy through a Bregman divergence generated by a Legendre function, leading
to a semilinear elliptic subproblem. We first establish a well-posedness and
strict feasibility theory for these subproblems. We then analyze the
convergence of the resulting iteration in the \(H^1\)-norm. Our convergence
analysis is based on a sequential strict local minimality inequality. We show
abstractly how the order of minimality determines the convergence rate.
Assuming that the initial guess lies above the exact solution and a one-sided
Bregman growth condition holds, we establish sequential strict local minimality
of order $s \geq 2$ and derive sublinear convergence rates in the $H^1$-norm of
the form $\mathcal{O}(k^{-\zeta})$, where $\zeta\in(1/2,1]$ depends on the
choice of Legendre function. For the Shannon and Tsallis entropies, we show
that these rates are sharp in a uniform worst-case sense. Under additional
assumptions on the free boundary and the obstacle, we establish sequential strict local minimality of order $s = 1$ with the Shannon and Spence entropies. This
yields linear convergence of the form $\mathcal{O}(\varrho^k)$ for some
$\varrho\in(0,1)$.
\end{abstract}

\keywords{obstacle problem, Bregman proximal point method, entropic Poisson equation, Legendre function, convergence rates}

\maketitle

\section{Introduction}

Variational inequalities with pointwise constraints arise naturally in contact
mechanics \cite{kikuchi1988contact,wriggers2007computational}, mathematical
image processing \cite{ambrosio1990approximation}, mathematical finance
\cite{cont2003financial}, and many other areas of applied analysis. A
prototypical example is the obstacle problem, where one minimizes an energy
functional over a closed convex set of functions constrained not to pass through
a prescribed obstacle \cite{bartels2015numerical}. Even for the quadratic
Dirichlet energy, this inequality constraint introduces a free boundary and
destroys the direct Euler--Lagrange equation structure available for
unconstrained elliptic problems \cite{kinderlehrer2000introduction}. As a
consequence, both the design and analysis of solution algorithms require tools
that are sensitive to the geometry of the feasible set.

Bregman proximal point methods provide an elegant framework for solving
variational inequalities with pointwise constraints. The basic idea is to
replace the original constrained problem by a sequence of regularized
subproblems in which the feasible set's geometry is incorporated into a suitable
Legendre function \cite{rockafellar1967conjugates}. This viewpoint has recently
been used in numerical algorithms for variational problems, including the
proximal Galerkin method \cite{keith2024proximal,keith2025priori}.

For the obstacle problem with feasible set
\begin{equation*}
    K = \{u \in H_g^1(\Omega) \mid u \geq \phi \text{ a.e.\ in } \Omega\},
\end{equation*}
a canonical choice of Legendre function is the shifted Shannon entropy
\begin{equation}
    \label{eq:GeneralizedShannonEntropy}
    R(u) = (u - \phi) \log(u - \phi) - (u - \phi).
\end{equation}
The precise definition of a Legendre function and its associated Bregman
divergence will be given in \Cref{sec:problem-setting}, below. With the choice
\eqref{eq:GeneralizedShannonEntropy}, the Bregman proximal subproblems take the
form of semilinear elliptic partial differential equations (PDEs). For example,
to minimize the Dirichlet energy over the feasible set,
\begin{equation*}
    E(u)
    = \frac{1}{2} \int_{\Omega} |\nabla u|^2 \diff x - \int_{\Omega} fu \diff x,
    \quad u \in K,
\end{equation*}
the formal optimality condition at the \(k\)-th step is
\begin{equation}
    \label{eq:entropic-poisson}
    (\nabla u^k,\nabla v) + \alpha_k^{-1} (\log(u^k - \phi),v)
    = (f,v) + \alpha_k^{-1} (\log(u^{k-1} - \phi),v)
    \quad \forall v \in H_0^1(\Omega).
\end{equation}
We refer to \eqref{eq:entropic-poisson} as \textit{the entropic Poisson
equation}. The logarithmic singularity in \eqref{eq:entropic-poisson} keeps the
iterates strictly feasible, while still allowing the limiting solution to make
contact with the obstacle.

The entropic Poisson equation is attractive because it transforms the
constrained problem into a sequence of (locally) smooth semilinear elliptic
PDEs. However, its rigorous analysis is subtle. The logarithm is
meaningful only when \(u^k-\phi\) is bounded away from zero, a property that
cannot be assumed a priori because the solution of the obstacle problem may touch
the obstacle on sets of positive measure. Thus, a fundamental task is to prove
that each proximal subproblem is well-posed and that its solution is strictly
separated from the obstacle in the \(L^\infty\) sense. In this paper, we
establish this property by introducing a regularized entropy, deriving uniform
\(L^\infty\) bounds for the corresponding entropy variables, and then passing
back to the original logarithmic equation.

Our second goal is to quantify the convergence rate of the resulting Bregman
proximal point iteration. Recent work shows that these rates can depend
strongly on the geometry induced on the feasible set by the underlying Legendre
function. In the finite-dimensional setting, \cite{azizian2024rate} studies
last-iterate convergence of Bregman proximal methods for variational
inequalities through the associated local geometry. Their Legendre exponent
quantifies the local growth of the Bregman divergence near the solution and
leads to rates that reflect this geometry. In the present infinite-dimensional
setting, the analogous role will be played by a one-sided Bregman growth
condition adapted to both the positive invariance of the iterates and the PDE
formulation.

For the obstacle problem specifically, \cite{keith2024proximal} introduces and
analyzes a Bregman proximal point method based on the Shannon entropy. The
estimate
\begin{equation*}
    \|u^k-u^*\|_{H^1(\Omega)}
    \lesssim \left(\sum_{j=1}^k\alpha_j\right)^{-1/2}
\end{equation*}
is established there, yielding the baseline rate \(\mathcal O(k^{-1/2})\) under
constant step sizes. This worst-case estimate follows from a classical proximal
point argument \cite{chen1993convergence} and does not exploit finer properties
of the underlying Legendre function, the Lagrange multiplier, or the free
boundary. Numerical experiments reported in \cite{keith2024proximal} suggest
faster worst-case convergence for the Shannon entropy, while
\cite[Remark~4.18]{keith2024proximal} conjectures that the rate can be
(further) improved under a strict complementarity assumption. These
observations leave open how the Legendre geometry can be used to obtain sharper
algebraic rates in the infinite-dimensional obstacle setting and whether
additional free-boundary structure leads to even faster convergence.

The main contributions of this work are therefore threefold. First, we give a
self-contained well-posedness and regularity theory for the generalized
entropic Poisson equations associated with the Bregman proximal point method.
Second, we identify a convergence mechanism that relates the convergence rate
to the order of minimality. Under a one-sided Bregman growth condition
with exponent \(\theta\in(0,1]\), we establish sequential strict local minimality
of order \(s=2/\theta\), yielding the dimension-independent \(H^1\)-convergence
rate \(\mathcal O(k^{-1/(2-\theta)})\) for constant step sizes and sharpening
the estimate in \cite[Theorem 4.13]{keith2024proximal}. For the Shannon and
Tsallis entropies, we also construct families of admissible obstacle problems
to prove that the resulting sublinear rates are sharp in a uniform worst-case
sense. Third, we show that additional assumptions on the obstacle and free
boundary yield sequential strict local minimality of order $s = 1$ for the
Shannon and Spence entropies, and consequently linear convergence of both the
Bregman divergence and the natural energy error.

The rest of the paper is organized as follows. \Cref{sec:problem-setting}
introduces the obstacle problem including the energy form and free boundary,
the class of Legendre functions considered in this work, and the Bregman
proximal point iteration. \Cref{sec:main-results} states the well-posedness
results, the a priori properties of the iterates, and the convergence
estimates. \Cref{sec:wellposedness-regularity} proves existence, uniqueness,
and some a priori estimates of the generalized entropic Poisson equation.
\Cref{sec:convergence-rate} contains the proof of the sublinear and linear
convergence estimates. We conclude in \Cref{sec:conclusion}. The appendices
verify the structural assumptions for the model Bregman divergences used
throughout the paper and collect several auxiliary results.

\section{Algorithm and Problem Setting}
\label{sec:problem-setting}

Let \(\Omega \subset \R^d\) be a bounded Lipschitz domain. We denote by
$H_0^1(\Omega)$ the homogeneous space of admissible variations. When
nonhomogeneous Dirichlet data are prescribed, we write
\begin{equation*}
    H_g^1(\Omega)
    \coloneqq \{v \in H^1(\Omega) \mid \trace v = g \text{ on } \partial\Omega\}
\end{equation*}
for the associated affine space, where $g \in H^{1/2}(\partial \Omega)$ and
\(\trace\) denotes the trace operator \(\trace \colon H^1(\Omega) \to
H^{1/2}(\partial\Omega)\). Let $\phi \in H^1(\Omega)$ be a prescribed obstacle.
The feasible set is
\begin{equation*}
    K
    \coloneqq  \{v \in H_g^1(\Omega) \mid v \geq \phi \text{ a.e.\ in } \Omega\}.
\end{equation*}
Throughout the paper, we assume that \(K\) is nonempty. More precisely, we
impose the strict boundary compatibility condition that there exists a constant
\(\delta_0 > 0\) such that
\begin{equation}
\label{eq:boundary-compatibility}
    g - \trace \phi > \delta_0
    \quad \text{ a.e.\ on } \partial\Omega.
\end{equation}

\subsection{Obstacle problems}

We consider the obstacle problem
\begin{equation}
\label{eq:obstacle-problem}
    u^* \in \argmin_{u \in K} E(u),
\end{equation}
where \(E\) is a quadratic energy of divergence form:
\begin{equation}
\label{eq:energy-general-A}
    E(u)
    = \frac{1}{2} \int_\Omega A(x) \nabla u \cdot \nabla u \diff x - F(u).
\end{equation}
Here \(F\) is a bounded linear functional, and \(A(x) = (a_{ij}(x))_{i,j=1}^d\)
is a symmetric, uniformly elliptic, \(C^{1,\alpha}\) matrix field on
\(\overline \Omega\). More precisely, we assume that there exists a constant $0
< \Lambda < \infty$ such that
\begin{equation}
\label{eq:uniform-ellipticity}
    \frac{1}{\Lambda} |\xi|^2
    \leq A(x) \xi \cdot \xi
    \leq \Lambda |\xi|^2
    \quad \text{ for a.e. } x \in \Omega,\, \xi \in \R^d.
\end{equation}
We write
\begin{equation*}
    a(u,v)
    \coloneqq  \int_\Omega A(x) \nabla u \cdot \nabla v \diff x.
\end{equation*}
Then
\begin{equation}
\label{eq:energy-general}
    E(u)
    = \frac{1}{2} a(u,u) - F(u).
\end{equation}
The ellipticity condition \eqref{eq:uniform-ellipticity} implies that
\(a(\cdot,\cdot)\) is continuous and coercive on \(H_0^1(\Omega)\). Consequently, since
\(K\) is closed, convex, and nonempty, the obstacle problem
\eqref{eq:obstacle-problem} admits a unique minimizer \(u^* \in K\).

The minimizer \(u^* \in K\) is equivalently characterized by the first-order
optimality condition
\begin{equation}
\label{eq:abstract-obstacle-vi}
    \langle E'(u^*),v - u^*\rangle
    \geq 0
    \quad \forall v \in K ;
\end{equation}
cf.\ \cite[Theorem 6.2]{kinderlehrer2000introduction}. For the quadratic energy
\eqref{eq:energy-general}, this condition becomes the variational inequality
\begin{equation}
\label{eq:obstacle-vi}
    a(u^*,v - u^*) \geq F(v - u^*)
    \quad \forall v \in K .
\end{equation}
When \(F(v) = (f,v)\), the distributional residual
$
\lambda^* \coloneqq -\nabla \cdot (A \nabla u^*) - f
$
may be interpreted as the obstacle multiplier. Formally, the variational
inequality corresponds to the complementarity system
\begin{equation*}
    u^* \geq \phi,
    \qquad \lambda^* \geq 0,
    \qquad \lambda^*(u^* - \phi) = 0.
\end{equation*}
This formulation separates the domain into the inactive and active regions.
Formally, the inactive set is
\begin{equation*}
    \Omega_+
    \coloneqq  \{x \in \Omega \mid u^*(x) > \phi(x)\},
\end{equation*}
where the constraint is inactive and hence \(\lambda^* = 0\). On this set, the
solution satisfies the (unconstrained) Euler--Lagrange equation
\begin{equation*}
    -\nabla \cdot (A \nabla u^*)
    = f.
\end{equation*}
The active set is
\begin{equation*}
    \Omega_0
    \coloneqq  \{x \in \Omega \mid u^*(x) = \phi(x)\},
\end{equation*}
where the solution touches the obstacle and the multiplier \(\lambda^*\) may be
nonzero. The interface between these two regions,
\begin{equation*}
    \Gamma
    \coloneqq \partial\Omega_0 \cap \Omega,
\end{equation*}
is the free boundary; see \Cref{fig:free-boundary}. The presence of this free
boundary is one of the main features that distinguishes the obstacle problem
from a standard elliptic boundary value problem.

\begin{figure}[htbp]
    \centering
    \includegraphics[width=0.5\textwidth]{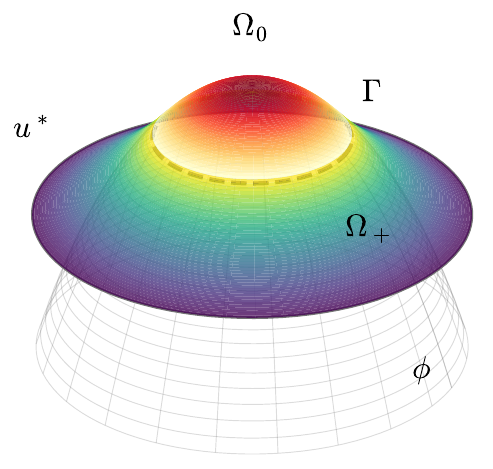}
    \caption{Exact solution $u^*$ and radial obstacle $\phi$ on the unit disk
    $\Omega = \{x \in \R^2 \mid |x| < 1\}$. The central region is the active
    set \(\Omega_0\), the outer region is the inactive set \(\Omega_+\), and
    the dashed curve denotes the free boundary \(\Gamma\).}
    \label{fig:free-boundary}
\end{figure}

\subsection{Legendre functions and Bregman divergence}
\label{subsec:leg-func-breg-div}

We next introduce the Legendre functions used in the Bregman proximal point
method to encode the obstacle constraint.

\begin{definition}[Legendre function]
Let the essential domain of a function \(R \colon \R \to \ER\) be defined by
\begin{equation*}
    \dom R
    \coloneqq \{a \in \R \colon R(a) < +\infty\}.
\end{equation*}
We call a proper, lower-semicontinuous, convex function \(R\) a Legendre
function if
\begin{itemize}
    \item \(\intr(\dom R) \neq \emptyset\);
    \item \(R\) is differentiable on \(\intr(\dom R)\);
    \item for every \(a \in \partial(\dom R)\) and every \(b \in \intr(\dom R)\),
    \begin{equation*}
        \lim_{t \to 0^+} R'(a + t(b-a)) (b-a)
        = -\infty;
    \end{equation*}
    \item \(R\) is strictly convex on \(\intr(\dom R)\).
\end{itemize}
\end{definition}

This class of convex functions was introduced by Rockafellar
\cite{rockafellar1967conjugates,rockafellar1997convex}. We denote by \(R^*\)
the convex conjugate of \(R\), namely
\begin{equation*}
    R^*(a^*)
    = \sup_{a\in\R}\, \{aa^* - R(a)\}.
\end{equation*}
For Legendre functions, the gradient map
\begin{equation*}
    R' \colon \intr(\dom R) \to \intr(\dom R^*)
\end{equation*}
is a topological isomorphism, and its inverse is the gradient of the convex
conjugate:
\begin{equation}
\label{eq:legendre-duality}
    (R^*)'=(R')^{-1};
\end{equation}
see \cite[Theorem~1]{rockafellar1967conjugates}. Thus, the inverse gradient
maps the dual domain into the interior of the feasible domain. We refer the
interested reader to \cite{keith2024proximal,dokken2025latent} for the
associated latent-variable interpretation.

The Legendre function \(R\) induces a non-symmetric notion of distance within
\(\dom R\) via its Bregman divergence \cite{bregman1967relaxation}:
\begin{equation*}
    D_R(a,b)
    = R(a) - R(b) - R'(b)(a-b),
    \quad a \in \dom R,
    \quad b \in \intr(\dom R).
\end{equation*}
This quantity measures the error in the first-order Taylor expansion of \(R\).
Since \(R\) is strictly convex,
\begin{equation*}
    D_R(a,b) \geq 0,
    \qquad
    D_R(a,b) = 0 \iff a = b.
\end{equation*}
We also recall the three-point identity \cite[Lemma~3.1]{chen1993convergence}:
\begin{equation}
\label{eq:three-point-identity}
    D_R(a,b)-D_R(a,c)+D_R(b,c)
    = \bigl(R'(b)-R'(c)\bigr)(b-a),
\end{equation}
which will be used repeatedly in the convergence analysis.

Four scalar Legendre functions considered in this paper are summarized in
\Cref{tab:four-legendre-functions}.
\begin{table}[h]
\scriptsize
    \centering
    \renewcommand{\arraystretch}{2.0}
    \begin{tabular}{llll}
        \toprule
        Type & \(r(s)\) & \(r'(s)\) & \((r^*)'(\psi) = (r')^{-1}(\psi)\) \\
        \midrule
        Shannon entropy
            & \(s\log s - s\)
            & \(\log s\)
            & \(\exp(\psi)\)
            \\
        Tsallis entropy, \(1 < q < 2\)
            & \(\displaystyle
                \frac{s^{2-q} - (2-q)s + (1-q)}{(q-1)(q-2)}
              \)
            & \(\displaystyle
                \frac{s^{1-q} - 1}{1-q}
              \)
            & \(\displaystyle
                (1 - (q-1)\psi)^{-1/(q-1)}
              \)
            \\
        Kaniadakis entropy, \(0 < \kappa < 1\)
            & \(\displaystyle
                \frac{s^{\kappa+1}}{2\kappa(\kappa+1)}
                - \frac{s^{1-\kappa}}{2\kappa(1-\kappa)}
              \)
            & \(\displaystyle
                \frac{s^\kappa - s^{-\kappa}}{2\kappa}
              \)
            & \(\displaystyle
                \left(\sqrt{1 + \kappa^2\psi^2} + \kappa\psi\right)^{1/\kappa}
              \)
            \\
        Spence entropy
            & \(\displaystyle
                \frac{1}{2}s^2 + \li_2(e^{-s}) - \frac{\pi^2}{6}
              \)
            & \(\log(e^s - 1)\)
            & \(\log(1 + e^\psi)\)
            \\
        \bottomrule
    \end{tabular}
    \caption{Four model Legendre functions. The shifted functions are
    \(R_i(x, u) = r_i(u - \phi(x))\).}
    \label{tab:four-legendre-functions}
\end{table}

\begin{remark}[Legendre functions singular at $s=0$]
The Legendre functions in \Cref{tab:four-legendre-functions} admit finite
continuous extensions to $s=0$. This property is important because the
exact solution may satisfy $u^*-\phi=0$ on the active set. By contrast, the
Burg entropy \(r_{\mathrm B}(s)=-\log s\) and the superlinear Burg-type
entropy \(r_{\mathrm{SB}}(s)= -\log(s) + s^2\) satisfy \(r(0)=+\infty\).
The associated Bregman divergence \(D_R(u^*,u^k)\) need not be finite when
\(u^*\) touches the obstacle. Consequently, these Legendre functions are
not covered by the present convergence framework.
\end{remark}

The scalar Legendre functions $r$ in \Cref{tab:four-legendre-functions}
satisfy: \(\dom r = [0,\infty)\) with \(r''(s) > 0\) for all \(s \in
(0,\infty)\).

An additional growth condition is often imposed on Legendre functions---that
is, superlinearity \cite{dokken2025latent,keith2024proximal,keith2025priori}.
There are many characterizations of superlinear functions; for our purposes, a
function $r : \R \to \ER$ is said to be superlinear if $\lim_{|a| \to \infty}
r(a)/|a| = \infty$. For Legendre functions, superlinearity allows its convex
conjugate to be finite-valued everywhere, i.e., $\dom r^* = \R$, which allows
for simpler analysis.

In this work, we do not assume the Legendre functions under consideration are
superlinear, as is the case with the Tsallis entropy. Instead, we weaken this
condition: in addition to our prior assumptions $\dom r = [0,\infty)$ and
$r''(s) > 0$ for $s \in (0,\infty)$, we assume $\lim_{a \to \infty} r(a)/a \geq
0$. Superlinear functions trivially satisfy this condition; for the Tsallis
entropy, $\lim_{a \to \infty} r_q(a)/a = 1/(q - 1) > 0$ for $1 < q < 2$.
Moreover, we highlight that the condition $\lim_{a \to \infty} r(a)/a \geq 0$
is not implied by the definition of Legendre functions or the assumption $\dom
r = [0,\infty)$. Indeed, the twice-differentiable Legendre function $a \mapsto
-\sqrt{a} - a$ satisfies $\dom r = [0,\infty)$, yet $\lim_{a \to \infty} r(a)/a
= -1$.

In the following lemma, we state several characterizations of the condition
$\lim_{a \to \infty} r(a)/a \geq 0$ for Legendre functions.
\begin{lemma}
\label{lem:weak-superlineraity}
    Let $r : \R \to \ER$ be a Legendre function satisfying $\dom r =
    [0,\infty)$. Then
    \begin{equation*}
        \lim_{a \to \infty} \frac{r(a)}{a} \geq 0
        \iff \lim_{a \to \infty} r'(a) \geq 0
        \iff (-\infty,0) \subset \ran r'.
    \end{equation*}
\end{lemma}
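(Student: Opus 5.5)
Since \(r\) is convex on \(\dom r = [0,\infty)\) and differentiable on \((0,\infty)\), the derivative \(r'\) is nondecreasing there. Hence the limit \(L \coloneqq \lim_{a\to\infty} r'(a) \in (-\infty,+\infty]\) exists; strict convexity even makes \(r'\) strictly increasing. The first step is to check that \(\lim_{a\to\infty} r(a)/a\) also exists and equals \(L\). This is a standard fact for convex functions, and I will prove it directly rather than cite it. For \(a > 1\), the mean value theorem gives
\[
\frac{r(a)}{a} = \frac{r(1)}{a} + \frac{a-1}{a}\, r'(\xi_a), \qquad \xi_a \in (1,a).
\]
Since \(r'\) is monotone, \(r'(\xi_a) \le r'(a)\). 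Moreover, for any fixed \(b > 1\) and all \(a > b\), we have \(r(a) \ge r(b) + r'(b)(a-b)\). Dividing by \(a\) and letting \(a\to\infty\) yields \(\liminf r(a)/a \ge r'(b)\) for every \(b\), and hence \(\liminf r(a)/a \ge L\). In the other direction, \(\limsup r(a)/a \le \limsup r'(a) = L\). So both limits exist and coincide, and the first equivalence \(\lim r(a)/a \ge 0 \iff \lim r'(a) \ge 0\) follows at once.

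For the second equivalence, I will first pin down the range of \(r'\) on \(\intr(\dom r) = (0,\infty)\). The Legendre boundary condition at \(a = 0\) says that \(\lim_{t\to0^+} r'(tb)\, b = -\infty\) for \(b > 0\), that is, \(r'(s) \to -\infty\) as \(s \to 0^+\). The derivative of a differentiable convex function on an open interval is continuous; alternatively one can appeal to the topological isomorphism in \eqref{eq:legendre-duality}. Combined with strict monotonicity, this gives \(\ran r' = (-\infty, L)\), an open interval.

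Both directions of the second equivalence then follow. If \(L \ge 0\), then \((-\infty,0) \subset (-\infty,L) = \ran r'\). Conversely, if \((-\infty,0) \subset (-\infty,L)\), then \(L \ge 0\).

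The only delicate point is the claim that \(\ran r'\) is exactly \((-\infty,L)\). This requires two facts: the continuity of \(r'\), and the boundary blow-up \(r'(0^+) = -\infty\) coming from the Legendre definition. Both are routine, so I expect no genuine obstacle. The one care needed is to allow \(L = +\infty\), in which case every step above still holds.
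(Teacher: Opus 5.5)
Your proof is correct and complete. The paper states this lemma without proof, so there is no competing argument to compare against. It uses the lemma only through the consequence that $(-\infty,0)$ lies in the range of $r'$: explicitly in the proof of \Cref{lem:preimage-l-infty}, and implicitly in that of \Cref{thm:entropic-pde}, where it ensures $(r')^{-1}(-L)$ is defined.

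Your argument supplies exactly what is omitted. You show that $\lim_{a\to\infty} r(a)/a$ exists and equals $L=\lim_{a\to\infty} r'(a)\in(-\infty,+\infty]$, using the tangent-line lower bound and the chord/mean-value upper bound. You then identify the range of $r'$ on $(0,\infty)$ as $(-\infty,L)$, from continuity, monotonicity and the Legendre blow-up $r'(0^+)=-\infty$. Along the way you correctly establish the existence of $\lim_{a\to\infty} r(a)/a$, which the statement tacitly presupposes.

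One minor remark: strict convexity is not essential here. With $r'$ merely nondecreasing, its range on $(0,\infty)$ is still an interval with infimum $-\infty$ and supremum $L$, possibly containing $L$, and the equivalence with $L\geq 0$ is unchanged.
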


With a slight abuse of notation we extend these concepts to Carath\'{e}odory
functions,
\begin{equation*}
    R \colon \Omega \times \R \to \ER,
\end{equation*}
to capture the geometry of the feasible set $K$. In particular, we assume the
map \(R(x,\cdot)\) is a Legendre function with
\begin{equation*}
    \dom R(x,\cdot)
    = [\phi(x),\infty)
    \quad \text{for a.e.\ } x \in \Omega.
\end{equation*}
In the sequel, all model choices are of the shifted scalar form
\begin{equation}
\label{eq:shifted-legendre-form}
    R(x,y) = r(y - \phi(x)),
    \quad y \geq \phi(x).
\end{equation}

Allowing a further abuse of notation, we use the same symbol \(R\) for the
corresponding superposition operator,
\begin{equation*}
    R(u)(x)
    = R(x,u(x)),
    \quad u \in K.
\end{equation*}
For appropriate $u \in K$, we denote
\begin{equation*}
    R'(u)(x)
    = \partial_u R(x,u(x)).
\end{equation*}
For the shifted form \eqref{eq:shifted-legendre-form}, this reduces to
\begin{equation*}
    R'(u)(x)
    = r'(u(x)-\phi(x)).
\end{equation*}

The superposition operator \(R\) also induces the pointwise Bregman divergence
\begin{equation*}
    D_R(u,v)(x)
    \coloneqq  R(x,u(x)) - R(x,v(x)) - \partial_v R(x,v(x))(u(x) - v(x)),
\end{equation*}
defined for \(u(x)\geq \phi(x)\) and \(v(x)>\phi(x)\). Typically, we suppress the explicit \(x\)-dependence and write
\begin{equation*}
    D_R(u,v)
    = R(u) - R(v) - R'(v)(u - v).
\end{equation*}

\subsection{Bregman proximal point iteration}

We now formally define the Bregman proximal point iteration associated with the
obstacle problem. Let
\begin{equation*}
    K^\circ
    \coloneqq \{u \in K \cap L^\infty(\Omega) \mid \essinf_{\Omega} (u - \phi) > 0\}
\end{equation*}
denote the set of strictly feasible functions. Let \(u^0 \in K^\circ\) be an
initial iterate and let \(\{\alpha_k\}_{k \geq 1}\) be a sequence of positive
proximity parameters. The Bregman proximal point iteration is
\begin{equation}
\label{eq:general-bpp-step}
    u^k
    \in \argmin_{u \in K}
    \left\{
        E(u) + \frac{1}{\alpha_k} \int_\Omega D_R(u,u^{k-1}) \diff x
    \right\},
    \qquad k=1,2,\ldots.
\end{equation}
The Bregman term acts as an adaptive regularization whose geometry is determined
by the Legendre function \(R\). In particular, the singularity of \(R'\) at the
boundary of the feasible set encodes the obstacle constraint in the proximal
subproblem.

As demonstrated below, we may differentiate \eqref{eq:general-bpp-step} along
arbitrary directions \(v \in H_0^1(\Omega)\) if $u^k \in K^\circ$. This gives the formal
optimality condition
\begin{equation}
\label{eq:general-bpp-weak-form-A}
    \alpha_k a(u^k,v)
    +
    (R'(u^k),v)
    =
    \alpha_k F(v)
    +
    (R'(u^{k-1}),v)
    \qquad \forall v\in H_0^1(\Omega).
\end{equation}
When \(F(v)=(f,v)\), the corresponding formal strong form is
\begin{equation}
\label{eq:generalized-entropic-poisson-strong}
    -\alpha_k\nabla\cdot(A\nabla u^k)
    +
    R'(u^k)
    =
    \alpha_k f
    +
    R'(u^{k-1})
    \qquad \text{in }\Omega,
\end{equation}
with boundary condition
\begin{equation*}
    u^k = g
    \quad \text{on } \partial\Omega.
\end{equation*}
We refer to \eqref{eq:generalized-entropic-poisson-strong} as \textit{the
generalized entropic Poisson equation} associated with the proximal step $k$.
Taking the Shannon entropy, \(R(u) = (u-\phi)\log(u-\phi)-(u-\phi)\), and
setting \(A=I\), reduces \eqref{eq:general-bpp-weak-form-A} to the entropic
Poisson equation \eqref{eq:entropic-poisson}.

\section{Main Results}
\label{sec:main-results}

Here, we state the main well-posedness, a priori estimates, and convergence
results.

\subsection{Well-posedness and a priori estimates}

At this stage, equations \eqref{eq:general-bpp-weak-form-A} and
\eqref{eq:generalized-entropic-poisson-strong} should be understood as formal
optimality conditions. Their rigorous validity requires proving that the
minimizer of \eqref{eq:general-bpp-step} is strictly separated from the
obstacle, so that \(R'(u^k)\) is a well-defined \(L^\infty\)-function. This
positivity and regularity theory is developed below.

We first establish the well-posedness of each Bregman proximal subproblem
\eqref{eq:general-bpp-step}, including the strict feasibility needed to justify
its variational formulation \eqref{eq:general-bpp-weak-form-A}. We then record
several a priori properties of the resulting iterates that will provide the
basic structural control for the convergence analysis.

\subsubsection{Existence and uniqueness}

The existence and uniqueness of solutions to \eqref{eq:general-bpp-step} can be
argued using the direct method of the calculus of variations and convexity.
However, rigorously deriving \eqref{eq:general-bpp-weak-form-A} requires
establishing additional regularity of the iterate \(u^{k}\) since
\begin{equation*}
    u \mapsto \int_\Omega D_R(u,u^{k-1}) \diff x
    ,
    \qquad u,\, u^{k-1} \in K^\circ
    ,
\end{equation*}
is generally not continuously Fr\'{e}chet differentiable with respect to the
\(H^1(\Omega)\) norm topology. One of our first main results is that \(u^{k}
\in K^\circ\), which allows us to differentiate the functional in the
\(L^\infty(\Omega) \cap H^1(\Omega)\) norm topology, leading to
\eqref{eq:general-bpp-weak-form-A}.

In the case of the Shannon entropy, the authors in \cite{keith2024proximal} used
a truncation argument to show that \(u^k \in K^\circ\) for all \(k \geq 1\);
cf.\ \cite[Theorem 4.7]{keith2024proximal}. Here, we take a different approach
that can be generalized to the large family of Legendre
functions in \Cref{subsec:leg-func-breg-div}. In particular, we \emph{regularize} the function \(r\) through its
second-order Taylor series expansion. To this end, recall
\eqref{eq:shifted-legendre-form} and define a map \(T \colon \R \times \intr(\dom r)
\to \R\) by
\begin{equation*}
    T(a;b)
    = r(b) + r'(b) (a - b) + \frac{r''(b)}{2} (a - b)^2.
\end{equation*}
This map \(T\) is the second-order Taylor approximation of \(r\) centered at
\(b\in \intr(\dom r) = (0,\infty)\). We now define a regularized entropy
function \(r_N \colon \R \to \R\) for a given \(N > 1\),
\begin{equation*}
    r_N(a) =
    \begin{cases}
        r(a) & \text{if } a \in [1/N,N], \\
        T(a; 1/N) & \text{if } a < 1/N, \\
        T(a; N) & \text{if } a > N,
    \end{cases}
\end{equation*}
and set $R_N(u)(x) \coloneqq r_N(u(x) - \phi(x))$.

In \Cref{fig:reg-shannon}, we provide plots of the regularized Shannon entropy
\(r_N\) and its derivative \(r_N'\).
\begin{figure}
    \centering
    \includegraphics[height=0.35\textheight]{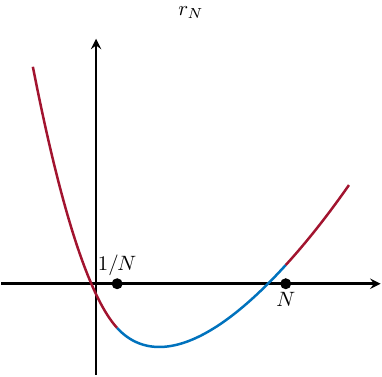}
    \includegraphics[height=0.35\textheight]{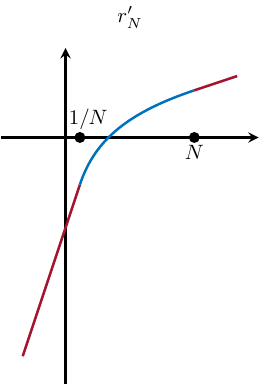}
    \caption{Plots of the regularized Shannon entropy and its derivative for
    given \(N\). The blue region denotes the region where \(r_N = r\) and $r_N'
    = r'$, and the red region denotes the region of regularization.}
    \label{fig:reg-shannon}
\end{figure}

Classical techniques in the calculus of variations can be used to establish the
existence and uniqueness of solutions \(u_N \in H_g^1(\Omega)\) to the regularized form of
\eqref{eq:general-bpp-weak-form-A},
\begin{equation}
    \label{eq:entropic-pde-reg-ve}
        \alpha_k \langle E'(u_N), v\rangle + (R_N'(u_N), v) = (R_N'(u^{k-1}),v) \quad \forall v \in H_0^1(\Omega)
    .
\end{equation}
As the arguments are standard, we omit the proof and refer the reader to
\cite[Chapter~3.4]{Dacorogna2008} for details.

The reason for studying~\eqref{eq:entropic-pde-reg-ve} lies in the fact that
\(\|R_N'(u_N)\|_{L^\infty(\Omega)}\) is bounded uniformly in \(N\), as stated in
the following lemma, which is proven in \Cref{sec:wellposedness-regularity}.

\begin{lemma}
    \label{lem:rn-prime-un-l-infty}
    Let \(\alpha_k > 0\) and \(u^{k-1} \in K^\circ\). Suppose \(f \in
    L^\infty(\Omega)\), \(g \in H^{1/2}(\partial \Omega)\), and \(\dv(A \nabla
    \phi) \in L^\infty(\Omega)\) with \(\esssup_{\partial \Omega} (g - \phi) <
    \infty\) and \(\essinf_{\partial \Omega} (g - \phi) > 0\). Finally, suppose
    $r$ satisfies $\dom r = [0,\infty)$ and $\lim_{a \to \infty} r(a)/a \geq
    0$. If \(u_N \in H_g^1(\Omega)\) solves \eqref{eq:entropic-pde-reg-ve}, then
    \begin{multline*}
        \|R_N'(u_N)\|_{L^\infty(\Omega)}
        \leq \max\bigl\{r_N'(\textstyle\esssup_{\partial \Omega} (g - \phi)),
             -r_N'(\textstyle\essinf_{\partial \Omega} (g - \phi)), \\
             \|\alpha_k f + R'(u^{k-1}) + \alpha_k \dv(A \nabla \phi)\|_{L^\infty(\Omega)}\bigr\}.
    \end{multline*}
    In particular, \(\|R_N'(u_N)\|_{L^\infty(\Omega)}\) is bounded uniformly in \(N\).
\end{lemma}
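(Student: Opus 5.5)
We prove the bound by a maximum-principle (Stampacchia truncation) argument applied to the entropy variable $\psi_N \coloneqq R_N'(u_N) = r_N'(w_N)$, where $w_N \coloneqq u_N - \phi$. Write $M$ for the right-hand side of the claimed estimate. It suffices to show $\psi_N \le M$ and $\psi_N \ge -M$ a.e.; the two bounds are symmetric, so we describe the upper one.

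First rewrite \eqref{eq:entropic-pde-reg-ve} in terms of $w_N$. Since $\langle E'(u_N),v\rangle = a(u_N,v) - (f,v)$ and $a(\phi,v) = -(\dv(A\nabla\phi),v)$ for $v\in H_0^1(\Omega)$ (this uses $\dv(A\nabla\phi)\in L^\infty$), we get
\begin{equation*}
    \alpha_k a(w_N,v) + (r_N'(w_N),v) = (h,v)
    \quad \forall v\in H_0^1(\Omega),
    \qquad h \coloneqq \alpha_k f + \alpha_k\dv(A\nabla\phi) + R_N'(u^{k-1}).
\end{equation*}
We need $R_N'(u^{k-1})$ to match the $R'(u^{k-1})$ appearing in the statement. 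Since $u^{k-1}\in K^\circ$, the difference $u^{k-1}-\phi$ lies in a compact subset of $(0,\infty)$, hence in $[1/N,N]$ for all large $N$. At those $N$ the two coincide. For small $N$ one can instead use the monotone-bound fact $|r_N'|\le |r'|$, which follows from convexity of $r$: the Taylor extension has a derivative lying between $r'(1/N)$ and $r'$, and similarly for the upper piece. Hence $h\in L^\infty$ with $\|h\|_\infty$ controlled by $M$.

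Next, set $c \coloneqq \max\{r_N'(\esssup_{\partial\Omega}(g-\phi)),\ \|h\|_\infty\}$. Since $r_N'$ is a continuous strictly increasing bijection of $\R$ (the quadratic extensions make it onto), there is a unique $t_c$ with $r_N'(t_c)=c$. Test with $v=(w_N-t_c)^+$. This is admissible in $H_0^1(\Omega)$ because on $\partial\Omega$ we have $w_N = g-\phi$, and $r_N'(g-\phi)\le c$ gives $g-\phi\le t_c$. We obtain
\begin{equation*}
    \alpha_k a\bigl((w_N-t_c)^+,(w_N-t_c)^+\bigr) + \bigl(r_N'(w_N)-h,(w_N-t_c)^+\bigr)=0 .
\end{equation*}
On the support of $v$ we have $r_N'(w_N)>c\ge h$, so both terms are nonnegative. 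Therefore $v=0$, i.e. $w_N\le t_c$, and hence $\psi_N\le c\le M$. The lower bound is identical, using $v=(w_N-t_{c'})^-$ with $r_N'(t_{c'})=-c'$ and $c' \coloneqq \max\{-r_N'(\essinf_{\partial\Omega}(g-\phi)),\|h\|_\infty\}$.

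The main obstacle is uniformity in $N$. The lower part of $M$ is harmless, because $-r_N'(\essinf(g-\phi))\le -r'(\essinf(g-\phi))$ and $r'$ is finite at a positive argument. The upper part is where the hypothesis $\lim_{a\to\infty} r(a)/a\ge 0$ enters. By \Cref{lem:weak-superlineraity}, this hypothesis ensures that negative levels $-c'$ lie in $\ran r'$. So for large $N$, $t_{c'}$ lies in $[1/N,N]$, where $r_N'=r'$, and the level sets do not drift with $N$. Likewise $r_N'(\esssup(g-\phi))=r'(\esssup(g-\phi))$ once $N$ exceeds $\esssup(g-\phi)$. This yields an $N$-independent bound.
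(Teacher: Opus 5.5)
Your argument is correct in the regime the paper actually treats. That regime is $N$ large enough that $1/N\le u^{k-1}-\phi\le N$ a.e., so that $h$ coincides with $\alpha_k f + R'(u^{k-1}) + \alpha_k\dv(A\nabla\phi)$. It is the same Stampacchia truncation idea as the paper's; only the test function differs.

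The paper truncates in the entropy variable. It tests with $v_t = H(t\,r_N'(w_N)-L_t)$ for an auxiliary $C^1$ cutoff $H$, which requires the chain rule through $r_N'$ (hence its Lipschitz continuity). It signs the gradient term via $r_N''\ge 0$, and concludes because the zeroth-order term $(L_t - t\,r_N'(w_N))\,v_t$ is strictly negative on the bad set.

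You instead truncate the primal variable at the preimage level, $(w_N-t_c)^+$ with $r_N'(t_c)=c$. This is legitimate because the quadratic extensions make $r_N'$ a continuous strictly increasing bijection of $\R$, so $\{w_N>t_c\}=\{r_N'(w_N)>c\}$. The gradient term becomes $\alpha_k a(v,v)$, and you close by coercivity and Poincar\'e. Your version is shorter and needs no cutoff $H$ and no regularity of $r_N'$ beyond monotonicity and surjectivity. The paper's version works directly on level sets of the entropy variable without inverting $r_N'$.

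Two corrections to your surrounding remarks.

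First, drop the small-$N$ aside. The bound $|r_N'|\le|r'|$ does not follow from convexity of $r$. On the lower piece it needs something like $r''$ nonincreasing. On the upper piece it fails even for the Shannon entropy: the tangent line $\log N+(a-N)/N$ of the concave function $\log$ lies strictly above $\log a>0$ for $a>N$. Moreover, even a pointwise bound $|R_N'(u^{k-1})|\le|R'(u^{k-1})|$ would not give $\|h\|_{L^\infty(\Omega)}\le\|\alpha_k f+R'(u^{k-1})+\alpha_k\dv(A\nabla\phi)\|_{L^\infty(\Omega)}$, because $\alpha_k f+\alpha_k\dv(A\nabla\phi)$ may cancel against $R'(u^{k-1})$. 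For small $N$, your argument honestly proves the bound with $R_N'(u^{k-1})$ in place of $R'(u^{k-1})$. The paper simply takes $N$ large, which is all the uniformity claim requires.

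Second, the hypothesis $\lim_{a\to\infty}r(a)/a\ge0$ plays no role in this lemma. Once $N\ge\max\{\esssup_{\partial\Omega}(g-\phi),\,1/\essinf_{\partial\Omega}(g-\phi)\}$ and $R_N'(u^{k-1})=R'(u^{k-1})$, your constants $c$ and $c'$ are already independent of $N$. Where $t_{c'}$ lies is irrelevant to the bound on $\psi_N$. That hypothesis is used afterwards, in the proof of \Cref{thm:entropic-pde}, to turn $r_N'(w_N)\ge -L_k$ into $w_N\ge (r')^{-1}(-L_k)>0$.
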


\Cref{lem:rn-prime-un-l-infty} shows that there exists a critical value of
\(N\) beyond which all solutions of \eqref{eq:entropic-pde-reg-ve} coincide.
Consequently, their common solution also solves
\eqref{eq:general-bpp-weak-form-A}. This leads to the main well-posedness
result, also proven in \Cref{sec:wellposedness-regularity}.

\begin{theorem}
    \label{thm:entropic-pde}
    Under the assumptions of \Cref{lem:rn-prime-un-l-infty}, there exists a
    unique \(u^k \in K^\circ\) that satisfies
    \eqref{eq:general-bpp-weak-form-A}. Moreover, \(u^k\) is the unique
    minimizer of the energy functional
    \begin{equation*}
        K \ni v \mapsto E(v) + \alpha_k^{-1} \int_\Omega D_R(v,u^{k-1}) \diff x.
    \end{equation*}
\end{theorem}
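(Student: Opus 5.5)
The plan is to pass from the regularized problem \eqref{eq:entropic-pde-reg-ve} to \eqref{eq:general-bpp-weak-form-A} using the uniform bound of \Cref{lem:rn-prime-un-l-infty}, and then to identify the resulting function with the minimizer via convexity.

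\emph{Step 1: choosing $N$.} Let $M$ denote the right-hand side of the bound in \Cref{lem:rn-prime-un-l-infty}. It depends on $N$ only through $r_N'$ evaluated at the boundary values $\esssup_{\partial\Omega}(g-\phi)$ and $\essinf_{\partial\Omega}(g-\phi)$. These values lie in $(0,\infty)$, so once $N$ is large enough that both belong to $[1/N,N]$, we have $r_N'=r'$ there and $M$ becomes independent of $N$. Since $u^{k-1}\in K^\circ$, the quantity $R'(u^{k-1})$ lies in $L^\infty$, and likewise $R_N'(u^{k-1})=R'(u^{k-1})$ as soon as $u^{k-1}-\phi\in[1/N,N]$ a.e.

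Now use the structure of $r_N'$. On $[1/N,N]$ it equals $r'$. Below $1/N$ it is affine with slope $r''(1/N)>0$, so it is $\le r'(1/N)$ there, and $r'(1/N)\to-\infty$ by the Legendre boundary condition. Above $N$ it is $\ge r'(N)$. The map $r_N'$ is a strictly increasing bijection onto $\R$. The bound $|r_N'(u_N-\phi)|\le M$ therefore confines $u_N-\phi$ to $[(r_N')^{-1}(-M),(r_N')^{-1}(M)]$.

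Choose $N$ so large that $r'(1/N)<-M$ and $r'(N)>M$. This is possible on the upper side only if $r'(N)\to\infty$, and that is the main obstacle. For Tsallis, $r'$ is bounded above by $1/(q-1)$. In that case I would instead use that the upper branch of $r_N'$ does not need to be controlled. The key fact is that $\ran r'\supset(-\infty,0)$ by \Cref{lem:weak-superlineraity}. I would then revisit the lemma's proof, a maximum principle comparing with level sets, to obtain a one-sided bound of the form $R_N'(u_N)\le M$ and $u_N-\phi\le C$ from the $L^\infty$ data.

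The honest fix is a separate upper $L^\infty$ bound on $u_N-\phi$, obtained by comparison with the solution of $-\alpha_k\dv(A\nabla w)=\alpha_k f+\text{const}$. This uses the fact that $r_N'$ is increasing, which gives a Stampacchia truncation estimate with $(u_N-\phi-c)_+$ as test function. With that bound in hand, take $N\ge C$. In either case we get $1/N\le u_N-\phi\le N$ a.e., so $R_N'(u_N)=R'(u_N)$ and $u_N$ solves \eqref{eq:general-bpp-weak-form-A}, with $u_N\in K^\circ$. Boundedness of $u_N$ in $L^\infty$ follows from the same bounds together with $\phi\in L^\infty$, or more generally from $u-\phi\in L^\infty$ as needed in the definition of $K^\circ$.

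\emph{Step 2: minimality and uniqueness.} Let $u^k$ be the function from Step 1 and let $v\in K$. Since $u^k\in K^\circ$, the quantity $R'(u^k)$ is finite. Convexity of $E$ and the identity
\[
\int_\Omega D_R(v,u^{k-1})-D_R(u^k,u^{k-1})\,dx=\int_\Omega D_R(v,u^k)+(R'(u^k)-R'(u^{k-1}))(v-u^k)\,dx
\]
(the three-point identity \eqref{eq:three-point-identity}) are the two ingredients. Testing \eqref{eq:general-bpp-weak-form-A} with $v-u^k\in H_0^1$ gives
\[
E(v)+\alpha_k^{-1}\int D_R(v,u^{k-1})\ge E(u^k)+\alpha_k^{-1}\int D_R(u^k,u^{k-1})+\alpha_k^{-1}\int D_R(v,u^k).
\]
The last integral is nonnegative and vanishes only if $v=u^k$. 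Hence $u^k$ is the unique minimizer.

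There is a technical point here: the test function $v-u^k$ is only in $H_0^1$, not in $L^\infty$. The pairing $(R'(u^k)-R'(u^{k-1}),v-u^k)$ is nevertheless well defined, because both $R'$ terms lie in $L^\infty$ and $v-u^k\in L^2$. The same inequality shows that any solution of \eqref{eq:general-bpp-weak-form-A} in $K^\circ$ is this minimizer, which proves uniqueness of the solution of \eqref{eq:general-bpp-weak-form-A} as well.
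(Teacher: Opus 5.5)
Your proposal is correct and follows the paper's proof essentially step for step. The uniform bound of \Cref{lem:rn-prime-un-l-infty} together with $r'(1/N)\to-\infty$ gives $u_N-\phi\geq 1/N$. The $N$-independent upper bound needed for non-superlinear $r$ such as Tsallis comes from comparison with a bounded linear supersolution: the paper shows $w_N\leq G+C_k\psi$, where $\psi$ is the bounded solution of $-\nabla\cdot(A\nabla\psi)=1$ and $C_k$ is built from $\|\widetilde f_k\|_{L^\infty(\Omega)}$ and the lemma's bound $L_k$. Minimality and uniqueness then follow from the weak form, convexity of $E$, and the three-point identity exactly as you wrote.

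Your intermediate idea of obtaining $u_N-\phi\leq C$ from the one-sided bound $R_N'(u_N)\leq M$ cannot work when $r'$ is bounded above. Drop it in favor of your ``honest fix.''
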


By applying an induction argument, the following corollary of
\Cref{thm:entropic-pde} is immediate.
\begin{corollary}
    Let $u^0 \in K^\circ$ and $\{\alpha_k\}_{k \geq 1}$ be a sequence of
    positive step sizes. Suppose \(f \in L^\infty(\Omega)\), \(g \in
    H^{1/2}(\partial \Omega)\), and \(\dv(A \nabla \phi) \in L^\infty(\Omega)\)
    with \(\esssup_{\partial \Omega} (g - \phi) < \infty\) and
    \(\essinf_{\partial \Omega} (g - \phi) > 0\). Finally, suppose $r$
    satisfies $\dom r = [0,\infty)$ and $\lim_{a \to \infty} r(a)/a \geq
    0$. Then the sequence $\{u^k\}_{k \geq 1}$ defined by
    \eqref{eq:general-bpp-step} is well-defined. Moreover, the iterates $u^k$
    lie in $K^\circ$ and uniquely satisfy \eqref{eq:general-bpp-weak-form-A}.
\end{corollary}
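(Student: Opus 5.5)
The corollary follows by induction on \(k\), with \Cref{thm:entropic-pde} as the inductive step. The hypotheses of \Cref{lem:rn-prime-un-l-infty} split into two kinds. Most concern only the data \(f,g,\phi,A,r\) and the step size \(\alpha_k>0\): \(f\in L^\infty(\Omega)\), \(g\in H^{1/2}(\partial\Omega)\), \(\dv(A\nabla\phi)\in L^\infty(\Omega)\), the two boundary bounds on \(g-\phi\), and \(\dom r=[0,\infty)\) with \(\lim_{a\to\infty} r(a)/a\geq 0\). These are assumed in the corollary and do not change with \(k\). The only hypothesis that refers to the iteration is \(u^{k-1}\in K^\circ\). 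So the induction hypothesis is the statement \(P(k)\): ``\(u^{k-1}\) is well-defined and lies in \(K^\circ\).''

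\emph{Base case.} \(P(1)\) holds because \(u^0\in K^\circ\) by assumption.

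\emph{Inductive step.} Assume \(P(k)\) holds. All hypotheses of \Cref{thm:entropic-pde} are then satisfied with \(\alpha_k>0\). The theorem yields a unique \(u^k\in K^\circ\) satisfying \eqref{eq:general-bpp-weak-form-A}, and this \(u^k\) is the unique minimizer of \(v\mapsto E(v)+\alpha_k^{-1}\int_\Omega D_R(v,u^{k-1})\diff x\) over \(K\).

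Uniqueness of the minimizer means the argmin in \eqref{eq:general-bpp-step} is a singleton. Hence the ``\(\in\)'' in \eqref{eq:general-bpp-step} determines \(u^k\) unambiguously, so \(u^k\) is well-defined. Since \(u^k\in K^\circ\), \(P(k+1)\) holds. The same theorem gives that \(u^k\) is the unique solution of \eqref{eq:general-bpp-weak-form-A} in \(K^\circ\).

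By induction the whole sequence \(\{u^k\}_{k\ge1}\) is well-defined, lies in \(K^\circ\), and each \(u^k\) uniquely satisfies \eqref{eq:general-bpp-weak-form-A}.

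The only point that needs checking is that the \(L^\infty\) bound in \Cref{lem:rn-prime-un-l-infty} is meaningful at every step. It involves \(\|R'(u^{k-1})\|_{L^\infty(\Omega)}\), which is finite precisely because \(u^{k-1}\in K^\circ\). Indeed, \(u^{k-1}-\phi\) is then bounded below by a positive constant. It is also bounded above, since \(u^{k-1}\in L^\infty(\Omega)\) and \(\phi\) is bounded, as implicitly required for \(K^\circ\) to be the relevant class. Because \(r'\) is continuous on \((0,\infty)\), \(r'(u^{k-1}-\phi)\) is therefore essentially bounded.

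This is exactly the property propagated by the induction, so nothing beyond \Cref{thm:entropic-pde} is required.
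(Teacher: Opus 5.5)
Your proposal is correct and is exactly the paper's argument: induction on \(k\), with \Cref{thm:entropic-pde} as the inductive step, since its only iteration-dependent hypothesis is \(u^{k-1}\in K^\circ\), and uniqueness of the minimizer makes the argmin in \eqref{eq:general-bpp-step} single-valued. The boundedness of \(\phi\) that you describe as implicitly required in fact follows from the hypotheses: \(u^0\in K^\circ\) forces \(g=\trace u^0\in L^\infty(\partial\Omega)\), hence \(\trace\phi\in L^\infty(\partial\Omega)\), and the weak maximum principle with \(\nabla\cdot(A\nabla\phi)\in L^\infty(\Omega)\) then gives \(\phi\in L^\infty(\Omega)\).
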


\subsubsection{A priori properties of the iterates}
\label{subsec:apriori-properties}

In this section, we record several properties of the Bregman proximal iterates
that will be used throughout the convergence analysis. Positive invariance and
uniform boundedness from above provide one-sided pointwise control of the
iterates relative to the exact solution, while energy dissipation yields a
uniform \(H^1\)-bound. Together, these estimates provide the order, amplitude,
and Sobolev control needed in both convergence regimes developed below.

For simplicity, from this point onward we restrict to the homogeneous setting
\begin{equation}
\label{eq:homogeneous-setting}
F=0,
\qquad
g=0.
\end{equation}
This homogeneous setting is assumed throughout the a priori and convergence
analysis below.
\begin{remark}[Reduction of nonhomogeneous data]
Let $z\in H_g^1(\Omega)$ solve
\[
    a(z,v)=F(v)
    \qquad \forall v\in H_0^1(\Omega).
\]
Then the change of variables
\[
    \widetilde u=u-z,
    \qquad
    \widetilde\phi=\phi-z
\]
reduces the obstacle problem to the
homogeneous setting. However, the additional assumptions used in \Cref{sec:linear-convergence} must
then be imposed on the transformed obstacle \(\widetilde\phi\).
\end{remark}

The following positive invariance
result is fundamental.

\begin{proposition}[Positive invariance]
\label{prop:main-positive-invariance}
Assume that $u^0\geq u^*$ a.e. in $\Omega$. Then
\begin{equation}
    u^k
    \geq
    u^*
    \qquad
    \text{a.e. in }\Omega,
    \qquad
    \forall k\geq0.
\end{equation}
\end{proposition}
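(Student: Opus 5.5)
We argue by induction on \(k\). The case \(k=0\) is the hypothesis. Assume \(u^{k-1}\geq u^*\) a.e.; we show \(u^k\geq u^*\). The idea is a comparison (Stampacchia-type) argument: test the Euler--Lagrange equation \eqref{eq:general-bpp-weak-form-A} with \(v=(u^*-u^k)^+\) and the variational inequality \eqref{eq:obstacle-vi} with a suitable competitor, then combine using monotonicity of \(r'\).

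First, \(v\coloneqq (u^*-u^k)^+\) is an admissible test function. Both \(u^*\) and \(u^k\) lie in \(H_0^1(\Omega)\) in the homogeneous setting \eqref{eq:homogeneous-setting}, so \(v\in H_0^1(\Omega)\). By \Cref{thm:entropic-pde}, \(u^k\in K^\circ\), so \(R'(u^k)\in L^\infty\) and \eqref{eq:general-bpp-weak-form-A} holds for all test functions in \(H_0^1\). Since \(F=0\), testing with \(v\) gives
\begin{equation*}
\alpha_k a(u^k,v) = \bigl(R'(u^{k-1})-R'(u^k),v\bigr).
\end{equation*}

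Next, the competitor \(w\coloneqq u^*-v=\min\{u^*,u^k\}\) lies in \(K\), because both \(u^*\) and \(u^k\) are \(\geq\phi\) and vanish on the boundary. Plugging \(w\) into \eqref{eq:obstacle-vi} yields \(a(u^*,-v)\geq 0\), that is, \(a(u^*,v)\leq 0\). Subtracting the two relations gives
\begin{equation*}
\alpha_k a(u^*-u^k,v) \leq \bigl(R'(u^k)-R'(u^{k-1}),v\bigr).
\end{equation*}
The left-hand side equals \(\alpha_k a(v,v)\), since \(\nabla v=\nabla(u^*-u^k)\) on \(\{v>0\}\) and \(\nabla v=0\) elsewhere.

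On \(\{v>0\}\) we have \(u^k<u^*\leq u^{k-1}\). Since \(r'\) is increasing (\(r''>0\)), this gives \(R'(u^k)-R'(u^{k-1})=r'(u^k-\phi)-r'(u^{k-1}-\phi)\leq 0\) there. Note that \(u^{k-1}\in K^\circ\), so \(R'(u^{k-1})\) is finite. Hence the right-hand side is \(\leq 0\), and
\begin{equation*}
\alpha_k a(v,v)\leq 0.
\end{equation*}
Coercivity of \(a\) on \(H_0^1\) from \eqref{eq:uniform-ellipticity} then gives \(v=0\), i.e., \(u^k\geq u^*\) a.e.

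The main point needing care is the justification of the test function and the finiteness of the pairing \((R'(u^k)-R'(u^{k-1}),v)\). This rests on \(u^k,u^{k-1}\in K^\circ\) from \Cref{thm:entropic-pde} and its corollary, which make both \(R'\) terms \(L^\infty\) functions; the argument would fail without this strict feasibility. The algebra itself is routine.
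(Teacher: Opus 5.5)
Your proof is correct and follows the same Stampacchia-type truncation argument as the paper: induction, testing the proximal equation with a positive part of $u^*-u^k$, using the variational inequality for $u^*$, monotonicity of $r'$, and coercivity. It is slightly leaner. The paper uses the shifted test function $(u^*-u^k-1/n)_+$ so that $R'(u^*)$ is bounded on its support, compares both $R'(u^k)$ and $R'(u^{k-1})$ against $R'(u^*)$, and then lets $n\to\infty$. You instead compare $R'(u^k)$ with $R'(u^{k-1})$ directly, so $R'(u^*)$ (which can be $-\infty$ on the active set) never appears and no limiting step is needed.
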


Positive invariance complements the strict feasibility established in
\Cref{thm:entropic-pde}. Indeed, strict feasibility ensures that $u^k-\phi>0$,
whereas \Cref{prop:main-positive-invariance} provides the additional order
relation
\begin{equation}
u^*-\phi\leq u^k-\phi
\qquad
\text{a.e. in }\Omega.
\end{equation}

\begin{remark}[Constructive initial iterate]
    The condition \(u^0\geq u^*\) can be enforced without prior knowledge
    of \(u^*\). A sufficient and directly verifiable construction based on
    supersolutions is provided in \Cref{app:ordered-initialization}.
\end{remark}

The iterates are also uniformly bounded from above as stated in
\Cref{lem:uniform-bound-above}.

\begin{lemma}[Uniform boundedness from above]
\label{lem:uniform-bound-above}
Define
\begin{equation}
\overline
    M_1
    \coloneqq
    \max
    \left\{
        \esssup_\Omega u^0,\,
        \esssup_\Omega \phi,\,
        0
    \right\}
    +1
\end{equation}
and
\begin{equation}
    M_1
    \coloneqq
    \overline M_1-\essinf_\Omega\phi.
\end{equation}
Then $\{u^k\}_{k\geq0}$ is uniformly bounded from above:
\begin{equation}
    u^k
    \leq
    \overline M_1
    \qquad
    \text{a.e. in }\Omega,
    \qquad
    k\geq0.
\end{equation}
Consequently,
\begin{equation}
\label{eq:M1-unfiorm-bound}
    u^k-\phi
    \leq
    M_1
    \qquad
    \text{a.e. in }\Omega,
    \qquad
    k\geq0.
\end{equation}
\end{lemma}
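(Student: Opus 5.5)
We argue by induction on \(k\), using a weak maximum principle (Stampacchia truncation) for the optimality condition \eqref{eq:general-bpp-weak-form-A} in the homogeneous setting \eqref{eq:homogeneous-setting}. For \(k=0\) there is nothing to prove, since \(u^0\leq \esssup_\Omega u^0 \le \overline M_1\). Assume \(u^{k-1}\leq \overline M_1\) a.e.; by \Cref{thm:entropic-pde}, \(u^k\in K^\circ\) satisfies \eqref{eq:general-bpp-weak-form-A}. Set \(w\coloneqq (u^k-\overline M_1)^+\). Since \(\overline M_1\ge 1>0\) and \(u^k=0\) on \(\partial\Omega\), we have \(w\in H_0^1(\Omega)\), and \(w\in L^\infty\) because \(u^k\in L^\infty\). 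Hence \(w\) is an admissible test function.

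Testing \eqref{eq:general-bpp-weak-form-A} with \(v=w\) and \(F=0\) gives
\begin{equation*}
    \alpha_k a(u^k,w) + \bigl(R'(u^k)-R'(u^{k-1}),w\bigr) = 0 .
\end{equation*}
Since \(\nabla w=\chi_{\{u^k>\overline M_1\}}\nabla u^k\), the first term equals \(\alpha_k a(w,w)\ge \alpha_k\Lambda^{-1}\|\nabla w\|_{L^2}^2\). On the set \(\{w>0\}\) we have \(u^k>\overline M_1\ge u^{k-1}\). Because \(r'\) is strictly increasing, this gives \(R'(u^k)=r'(u^k-\phi)>r'(u^{k-1}-\phi)=R'(u^{k-1})\) there. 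The second term is therefore nonnegative. We conclude \(\nabla w=0\), so \(w=0\) by Poincaré's inequality, and hence \(u^k\le\overline M_1\). The bound \eqref{eq:M1-unfiorm-bound} then follows immediately, since \(u^k-\phi\le \overline M_1-\essinf_\Omega\phi=M_1\).

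The main obstacle is justifying the test function and the sign argument. The key point is that \(R'(u^k)\) and \(R'(u^{k-1})\) lie in \(L^\infty\), which holds because both iterates are in \(K^\circ\) and below \(\overline M_1\). This makes the pairing finite. The term \(\esssup\phi\) in \(\overline M_1\) is not actually needed in this argument. It appears there presumably to guarantee \(\overline M_1\geq \phi\), so that the truncation level lies inside the feasible region. The "\(+1\)" in \(\overline M_1\) is likewise not needed here.
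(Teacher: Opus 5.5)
Your proof is correct and follows essentially the same route as the paper. Both argue by induction on $k$, test the optimality condition with the truncation $(u^k-\overline M_1)_+$, identify the bilinear term with $a(w,w)$, and sign the Bregman term on $\{u^k>\overline M_1\}$ via monotonicity of $r'$ and the induction hypothesis. Your side remarks are also accurate: $\esssup_\Omega\phi$ is redundant in $\overline M_1$ because $u^0\geq\phi$, and the ``$+1$'' plays no role in the argument.
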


Combining
\Cref{prop:main-positive-invariance} and \Cref{lem:uniform-bound-above} gives
\begin{equation}
u^*-\phi\in[0, M_1],
\qquad
u^k-\phi\in(0, M_1],
\qquad
u^*-\phi\leq u^k-\phi
\qquad
\text{a.e. in }\Omega.
\end{equation}

Moreover, the energy is nonincreasing along the sequence generated by the Bregman
proximal point iteration. We record this basic dissipation property in the
following lemma.

\begin{lemma}[Energy dissipation]
\label{lem:energy-dissipation}
The following energy dissipation law holds for all \(k \geq 0\):
\begin{equation}
\label{eq:energy-dissipation}
    E(u^{k+1}) \leq E(u^k).
\end{equation}
\end{lemma}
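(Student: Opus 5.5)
The plan is the standard proximal-point comparison: test the minimization problem \eqref{eq:general-bpp-step} defining \(u^{k+1}\) against the competitor \(v = u^k\). By \Cref{thm:entropic-pde}, for every \(k \geq 0\) the iterate \(u^{k+1}\) is the unique minimizer over \(K\) of
\begin{equation*}
    v \mapsto E(v) + \alpha_{k+1}^{-1} \int_\Omega D_R(v,u^k) \diff x.
\end{equation*}
The previous iterate \(u^k\) is admissible. For \(k\geq 1\) this follows from the corollary to \Cref{thm:entropic-pde}, which gives \(u^k\in K^\circ\subset K\). For \(k=0\) it holds by the standing assumption \(u^0 \in K^\circ\).

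Comparing the two values of the functional gives
\begin{equation*}
    E(u^{k+1}) + \alpha_{k+1}^{-1} \int_\Omega D_R(u^{k+1},u^k) \diff x
    \leq E(u^k) + \alpha_{k+1}^{-1} \int_\Omega D_R(u^k,u^k) \diff x
    = E(u^k).
\end{equation*}
The last equality uses \(D_R(u^k,u^k) = 0\) pointwise. Since \(D_R \geq 0\) pointwise by convexity of \(r\), and \(\alpha_{k+1} > 0\), the integral on the left is nonnegative and can be dropped. This yields \eqref{eq:energy-dissipation}. In fact it yields the slightly stronger bound \(E(u^{k+1}) + \alpha_{k+1}^{-1}\|D_R(u^{k+1},u^k)\|_{L^1(\Omega)} \leq E(u^k)\), which I would record for later use.

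The only point needing care is that every quantity above is finite, so the comparison is meaningful. Because \(u^k, u^{k+1} \in K^\circ\), the functions \(u^k-\phi\) and \(u^{k+1}-\phi\) take values in a compact subinterval of \((0,\infty)\). On such an interval \(r\) and \(r'\) are bounded. Hence \(D_R(u^{k+1},u^k) \in L^\infty(\Omega)\), and its integral is finite. The energy \(E\) is finite on \(H^1\). I do not expect any genuine obstacle beyond checking this admissibility and finiteness.

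An alternative route is to test \eqref{eq:general-bpp-weak-form-A} with \(v = u^{k+1}-u^k \in H_0^1(\Omega)\) and use convexity of \(E\). With \(F=0\) this gives
\begin{equation*}
    \alpha_{k+1} a(u^{k+1},u^{k+1}-u^k) = -(R'(u^{k+1})-R'(u^k),\,u^{k+1}-u^k) \leq 0
\end{equation*}
by monotonicity of \(r'\). Combined with \(E(u^{k+1})-E(u^k) \leq a(u^{k+1},u^{k+1}-u^k)\), this gives the same conclusion. The variational comparison above is simpler, so I would use it.
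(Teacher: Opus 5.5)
Your proof is correct, but your main route differs from the paper's. The paper's proof is exactly your ``alternative route'': it tests \eqref{eq:general-bpp-weak-form-A} at step $k+1$ with $u^k-u^{k+1}$, uses monotonicity of $R'$ to get $\langle E'(u^{k+1}),u^k-u^{k+1}\rangle\geq 0$, and concludes by convexity of $E$.

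Your comparison argument is the classical finite-dimensional proximal-point one. It uses only three facts: the defining minimality of $u^{k+1}$ in \eqref{eq:general-bpp-step}; that $u^k\in K^\circ$ is an admissible competitor with $D_R(u^k,u^k)=0$; and that $D_R\geq 0$. It therefore never differentiates the Bregman term, which is the delicate point the paper stresses. It also needs no convexity of $E$ once the minimizer is given. Your finiteness check is automatic as well, since the proximal functional at $u^{k+1}$ is bounded by its value $E(u^k)<\infty$ at $u^k$.

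The paper's route buys two things. First, it applies to any iterate characterized only by its weak Euler--Lagrange equation. Second, it is quantitatively sharper if nothing is discarded. By the three-point identity \eqref{eq:three-point-identity} with $a=c$, $(R'(u^{k+1})-R'(u^k))(u^{k+1}-u^k)=D_R(u^{k+1},u^k)+D_R(u^k,u^{k+1})$. Hence it actually gives $E(u^k)-E(u^{k+1})\geq\alpha_{k+1}^{-1}\int_\Omega\bigl(D_R(u^{k+1},u^k)+D_R(u^k,u^{k+1})\bigr)\diff x$, which dominates the bound you propose to record.
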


Combining \Cref{lem:energy-dissipation} with uniform ellipticity
\eqref{eq:uniform-ellipticity} gives the following uniform $H^1$ bound.

\begin{lemma}[Uniform \(H^1\)-boundedness]
\label{lem:uniform-H1-bound}
$\{u^k\}_{k\geq0}$ is
uniformly bounded in \(H^1(\Omega)\). More precisely,
\begin{equation*}
    \|u^k\|_{H^1(\Omega)}
    \leq M_2
    \qquad
    \forall k \geq0,
\end{equation*}
where
\begin{equation*}
    M_2
    \coloneqq
    \Lambda
    \left(
        C_P^2+1
    \right)^{1/2}
    \|u^0\|_{H^1(\Omega)}.
\end{equation*}
\end{lemma}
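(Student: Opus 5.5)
The plan is to combine the energy dissipation law of \Cref{lem:energy-dissipation} with the coercivity and continuity of \(a(\cdot,\cdot)\) on \(H_0^1(\Omega)\). We work in the homogeneous setting \eqref{eq:homogeneous-setting}, so \(E(u)=\tfrac12 a(u,u)\) and every iterate lies in \(H_0^1(\Omega)\). In particular, \(u^0\in K^\circ\subset H_0^1(\Omega)\), and \(u^k\in K^\circ\) for \(k\ge1\) by \Cref{thm:entropic-pde} and its corollary. Iterating \Cref{lem:energy-dissipation} from \(0\) to \(k\) gives
\begin{equation*}
    \tfrac12 a(u^k,u^k) = E(u^k) \le E(u^0) = \tfrac12 a(u^0,u^0)
    \qquad \forall k\ge 0 .
\end{equation*}

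Next we convert both sides into \(H^1\) quantities. The lower bound in \eqref{eq:uniform-ellipticity} gives \(a(u^k,u^k)\ge \Lambda^{-1}\|\nabla u^k\|_{L^2}^2\). The upper bound gives \(a(u^0,u^0)\le \Lambda\|\nabla u^0\|_{L^2}^2\le \Lambda\|u^0\|_{H^1}^2\). Combining these yields
\begin{equation*}
    \|\nabla u^k\|_{L^2}^2 \le \Lambda^2\|u^0\|_{H^1}^2 .
\end{equation*}
The Poincar\'e inequality on \(H_0^1(\Omega)\), \(\|v\|_{L^2}\le C_P\|\nabla v\|_{L^2}\), then controls the full norm:
\begin{equation*}
    \|u^k\|_{H^1}^2=\|u^k\|_{L^2}^2+\|\nabla u^k\|_{L^2}^2
    \le (C_P^2+1)\|\nabla u^k\|_{L^2}^2
    \le (C_P^2+1)\Lambda^2\|u^0\|_{H^1}^2 .
\end{equation*}
Taking square roots gives exactly \(M_2=\Lambda(C_P^2+1)^{1/2}\|u^0\|_{H^1(\Omega)}\).

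No step is a real obstacle, since this is a direct consequence of \Cref{lem:energy-dissipation}. The only point needing care is that the Poincar\'e inequality applies, i.e.\ that \(u^k\in H_0^1(\Omega)\), and this is ensured by \(g=0\). One should also fix the convention that \(C_P\) is the Poincar\'e constant in the form \(\|v\|_{L^2}\le C_P\|\nabla v\|_{L^2}\).
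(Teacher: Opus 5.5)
Your proposal is correct and follows the paper's proof essentially step for step. Both combine energy dissipation $E(u^k)\le E(u^0)$ with lower ellipticity plus Poincar\'e for $u^k$ and upper ellipticity for $u^0$, which yields the same constant $M_2=\Lambda(C_P^2+1)^{1/2}\|u^0\|_{H^1(\Omega)}$.
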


\subsection{Convergence rates}
\label{subsec:convergence-rates}

With the preceding a priori control of the iterates in place, we now state the
convergence rate results for the Bregman proximal point iteration
\eqref{eq:general-bpp-step}. We continue to work under the homogeneous setting
\eqref{eq:homogeneous-setting}. The proofs are given in
\Cref{sec:convergence-rate}. We emphasize that both convergence regimes are
dimension-independent, although the linear convergence result requires
additional geometric assumptions on the obstacle and free boundary.

The convergence theory developed below is organized around sequential strict local minimality. Once an admissible order $s$ has been established, its value
determines the resulting convergence rate. The first part uses the one-sided
growth of the Bregman divergence to obtain an admissible order and improve the
sublinear rates for a broad class of Legendre functions including each Legendre
function listed in \Cref{tab:four-legendre-functions}. The second part uses the
additional assumptions on the free boundary and obstacle, together with the
refined estimates for the Shannon and Spence entropies, to obtain the order $s
= 1$ and hence linear convergence, thereby establishing the improvement
anticipated in \cite[Remark 4.18]{keith2024proximal}.

\subsubsection{The convergence mechanism}
The key ingredient of our convergence mechanism is the strength with which the
energy gap controls the Bregman divergence from the energy minimizer. In finite
dimensions, a minimizer $x^* \in \mathbb{R}^n$ is called a strict local
minimizer of order $m$ if, in a neighborhood of $x^*$, the objective gap
controls the $m$-th power of the Euclidean distance to $x^*$; see
\cite[Definition 1.1(b)]{ward1994characterizations}. Related conditions have
appeared in the analysis of finite-dimensional Bregman-type optimization
methods; see, for instance, \cite[Lemma~3.4]{bauschke2019linear} and
\cite[Assumption 7]{zhang2021proximal}. This motivates the following
definition.

\begin{definition}[Sequential strict local minimality of order $s$]
We say that the energy $E$ satisfies \emph{sequential strict local minimality of order
$s\geq1$} at $u^*$ along the iteration $\{u^k\}$ if there exists a constant
$\sigma>0$, independent of $k$, such that
\begin{equation}
\label{eq:energy-bregman-inequality}
    E(u^k)-E(u^*)
    \geq
    \sigma \left( \int_{\Omega} D_R(u^*,u^k)\diff x \right)^s
    \qquad
    \forall k\geq0.
\end{equation}
We call such an $s$ an \emph{admissible order of minimality}.
\end{definition}

\begin{remark}[Interpretation of the terminology]
    The preceding definition is different from classical strict local minimality in two respects. First, the error is
    measured by the Bregman divergence rather than by a Euclidean norm. Second,
    the condition is imposed only along the sequence $\{u^k\}$, rather than at
    every feasible point in a neighborhood of $u^*$. The latter feature motivates the adjective \emph{sequential}.
\end{remark}

For brevity, we denote the Bregman error by
\begin{equation*}
    B_k
    \coloneqq \int_\Omega D_R(u^*,u^k) \diff x.
\end{equation*}
Each admissible order of minimality produces a corresponding convergence
rate, as stated in \Cref{thm:energy-bregman-convergence}. If $s=1$ is
admissible, the iteration converges linearly. If only an admissible order $s>1$
is available, the same mechanism yields a corresponding sublinear convergence
rate. To convert such an estimate into a convergence rate, we combine
\eqref{eq:energy-bregman-inequality} with the following one-step Bregman
descent property.

\begin{lemma}[One-step Bregman descent]
\label{lem:one-step-bregman-descent}
For every $k\geq1$, the Bregman error satisfies
\begin{equation}
\label{eq:one-step-descent}
    B_{k-1}-B_k
    \geq
    \alpha_k\bigl(E(u^k)-E(u^*)\bigr).
\end{equation}
\end{lemma}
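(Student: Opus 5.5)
We will prove the one-step Bregman descent by testing the optimality condition \eqref{eq:general-bpp-weak-form-A} (with $F=0$, $g=0$) against the direction $v = u^k - u^*$ and then applying the three-point identity \eqref{eq:three-point-identity} pointwise. By the corollary to \Cref{thm:entropic-pde}, $u^k, u^{k-1} \in K^\circ$, so $R'(u^k)$ and $R'(u^{k-1})$ lie in $L^\infty(\Omega)$. Moreover $v = u^k - u^* \in H_0^1(\Omega)$, so it is an admissible test function. This gives
\begin{equation*}
    \alpha_k a(u^k, u^k - u^*) = \bigl(R'(u^{k-1}) - R'(u^k),\, u^k - u^*\bigr).
\end{equation*}

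Next we apply \eqref{eq:three-point-identity} pointwise with $a = u^*$, $b = u^k$, $c = u^{k-1}$. This yields
\begin{equation*}
    \bigl(R'(u^k) - R'(u^{k-1})\bigr)(u^k - u^*) = D_R(u^*,u^k) - D_R(u^*,u^{k-1}) + D_R(u^k,u^{k-1}).
\end{equation*}
The identity is valid since $u^*(x) \in \dom R(x,\cdot)$ and $u^k(x), u^{k-1}(x) > \phi(x)$. All terms are integrable: the model functions $r$ extend continuously to $0$, the quantities $u^*-\phi$ and $u^k-\phi$ lie in $[0,M_1]$ by \Cref{lem:uniform-bound-above} and \Cref{prop:main-positive-invariance} (or simply by boundedness), and $r'(u^j-\phi)$ is bounded on $K^\circ$. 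Integrating and combining with the tested equation gives
\begin{equation*}
    B_{k-1} - B_k = \alpha_k a(u^k, u^k - u^*) + \int_\Omega D_R(u^k,u^{k-1}) \diff x \geq \alpha_k a(u^k, u^k - u^*).
\end{equation*}

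The last step is to bound the energy term from below. Since $E$ is quadratic and convex with $E'(u)=a(u,\cdot)$,
\begin{equation*}
    E(u^*) \geq E(u^k) + a(u^k, u^* - u^k),
\end{equation*}
so that $a(u^k, u^k - u^*) \geq E(u^k) - E(u^*)$. In fact, equality holds up to the nonnegative term $\tfrac12 a(u^k-u^*,u^k-u^*)$. Substituting gives \eqref{eq:one-step-descent}. Note that this argument does not need the variational inequality \eqref{eq:obstacle-vi} for $u^*$, only convexity of $E$.

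We expect the main obstacle to be the justification of the pointwise manipulations, specifically the integrability of $D_R(u^*,u^{k-1})$ when $u^*$ touches the obstacle. This requires $r(0)$ to be finite (as for the entropies in \Cref{tab:four-legendre-functions}) together with the $L^\infty$ bounds on the iterates. All other steps are immediate from earlier results.
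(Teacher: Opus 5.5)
Your proposal is correct and is essentially the paper's proof. You test the proximal optimality condition with $u^k-u^*$ (the paper uses $u^*-u^k$), apply the three-point identity with $(u^*,u^k,u^{k-1})$, discard the nonnegative term $\int_\Omega D_R(u^k,u^{k-1})\diff x$, and close with convexity of $E$ alone; your integrability remarks are extra care the paper omits, but citing \Cref{prop:main-positive-invariance} there would import the hypothesis $u^0\geq u^*$, which this lemma does not assume, so rest that justification instead on $u^k,u^{k-1}\in K^\circ$ (bounded $R'$) together with the at most quadratic growth of the model $r$, which gives $r(u^*-\phi)\in L^1(\Omega)$.
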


Since $E(u^k) \geq E(u^*)$, \Cref{lem:one-step-bregman-descent} immediately
gives $B_{k-1} \geq B_k$, i.e., $B_{k}$ is nonincreasing. Thus
\Cref{lem:energy-dissipation} and \Cref{lem:one-step-bregman-descent}
establish, respectively, that the functionals $E(u)$ and $\int_{\Omega}
D_R(u^*,u)\,\mathrm{d}x$ serve as Lyapunov functionals for the Bregman proximal
point iteration. These monotonicity properties are fundamental ingredients in
the global convergence analysis. Their finite-dimensional counterparts were
established in \cite[Lemma~3.3]{chen1993convergence}.

If \eqref{eq:energy-bregman-inequality} holds with order $s$, then
\eqref{eq:one-step-descent} gives the scalar recursion
\begin{equation}
    B_{k-1}-B_k
    \geq
    \alpha_k\sigma B_k^s.
\end{equation}
The scalar recursion for $B_k$ yields the following convergence theorem.

\begin{theorem}[Bregman error convergence induced by order-$s$ minimality]
\label{thm:energy-bregman-convergence}
Let $s\geq1$ be an admissible order of minimality, with corresponding constant $\sigma>0$.

If $s>1$, then
\begin{equation}
\label{eq:abstract-bregman-rate}
    B_k
    \leq
    \left(
        B_0^{1-s}
        +
        (s-1)
        \sum_{i=1}^k
        \frac{\alpha_i\sigma}
        {
            \left(
                1+\alpha_i\sigma B_0^{s-1}
            \right)^s
        }
    \right)^{-1/(s-1)}.
\end{equation}
In particular, if $\alpha_k\equiv\alpha>0$, then
\begin{equation}
\label{eq:abstract-bregman-fixed-rate}
    B_k
    =
    \mathcal O\left(k^{-1/(s-1)}\right).
\end{equation}

If $s=1$, then
\begin{equation}
\label{eq:abstract-bregman-linear-product}
    B_k
    \leq
    \prod_{i=1}^k
    \frac{1}{1+\alpha_i\sigma}
    B_0.
\end{equation}
In particular, if $\alpha_k\equiv\alpha>0$, then
\begin{equation}
\label{eq:abstract-bregman-linear-rate}
    B_k
    \leq
    \left(1+\alpha\sigma\right)^{-k}B_0.
\end{equation}
\end{theorem}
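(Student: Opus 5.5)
Combining \Cref{lem:one-step-bregman-descent} with the admissible order gives the scalar recursion
\begin{equation*}
B_{k-1} \geq B_k + \alpha_k\sigma B_k^s, \qquad k\geq 1 .
\end{equation*}
The proof then reduces to elementary facts about nonnegative nonincreasing sequences. We may assume $B_k>0$ for all $k$; otherwise, by monotonicity, $B_j=0$ for all $j\geq k$ and every bound holds trivially.

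\textbf{Case $s=1$.} The recursion reads $B_{k-1}\geq(1+\alpha_k\sigma)B_k$, i.e.\ $B_k\leq B_{k-1}/(1+\alpha_k\sigma)$. Iterating from $k$ down to $1$ gives \eqref{eq:abstract-bregman-linear-product}. Setting $\alpha_i\equiv\alpha$ gives \eqref{eq:abstract-bregman-linear-rate}.

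\textbf{Case $s>1$.} The plan is to show that
\begin{equation*}
B_k^{1-s}-B_{k-1}^{1-s}\geq (s-1)\,\frac{\alpha_k\sigma}{(1+\alpha_k\sigma B_0^{s-1})^s}
\end{equation*}
and then telescope from $1$ to $k$. This yields
\begin{equation*}
B_k^{1-s}\geq B_0^{1-s}+(s-1)\sum_{i=1}^k\frac{\alpha_i\sigma}{(1+\alpha_i\sigma B_0^{s-1})^s}.
\end{equation*}
Raising both sides to the power $-1/(s-1)$, which reverses the inequality, gives \eqref{eq:abstract-bregman-rate}.

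To prove the one-step inequality, write $h(t)=t^{1-s}$, which is convex and decreasing. The mean value theorem gives some $\xi\in[B_k,B_{k-1}]$ with
\begin{equation*}
h(B_k)-h(B_{k-1}) = (s-1)\,\xi^{-s}(B_{k-1}-B_k)\geq (s-1)\,B_{k-1}^{-s}(B_{k-1}-B_k).
\end{equation*}
Inserting the recursion gives the lower bound $(s-1)\alpha_k\sigma\,(B_k/B_{k-1})^s$.

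The remaining task, and the only delicate point, is a lower bound on the ratio $B_k/B_{k-1}$. From the recursion, $B_{k-1}\geq B_k(1+\alpha_k\sigma B_k^{s-1})$. This is the wrong direction: we need $B_{k-1}$ bounded \emph{above} in terms of $B_k$. The resolution is to pick the intermediate point differently and avoid the ratio altogether.

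Let $\Delta = B_{k-1}-B_k$. Since $\xi\leq B_{k-1}=B_k+\Delta$,
\begin{equation*}
h(B_k)-h(B_{k-1})\geq (s-1)\,\frac{\Delta}{(B_k+\Delta)^s}.
\end{equation*}
The map $\Delta\mapsto \Delta/(B_k+\Delta)^s$ is not monotone, so handle it as follows.

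If $\Delta\geq \alpha_k\sigma B_k^s$ but $\Delta$ is large, use $B_{k-1}\leq B_0$ directly. Otherwise, minimize over the admissible range $\Delta\in[\alpha_k\sigma B_k^s,\,B_0-B_k]$. The function is increasing and then decreasing, so its minimum over the interval is attained at an endpoint.
\begin{itemize}
\item At the left endpoint $\Delta=\alpha_k\sigma B_k^s$, the value is $\alpha_k\sigma/(1+\alpha_k\sigma B_k^{s-1})^s$. This is at least $\alpha_k\sigma/(1+\alpha_k\sigma B_0^{s-1})^s$, since $B_k\leq B_0$.
\item At the right endpoint $B_{k-1}=B_0$, one instead bounds $h(B_k)-h(B_0)$ directly by convexity. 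Cleaner still: define $\Delta^\ast$ as the larger root. Alternatively, since $h(B_k)-h(B_{k-1})$ is increasing in $B_{k-1}$, we may simply replace $B_{k-1}$ by its minimal admissible value $B_k+\alpha_k\sigma B_k^s$. This removes the endpoint issue entirely.
\end{itemize}
So the argument uses monotonicity of $h(B_k)-h(\cdot)$ to reduce to $B_{k-1}=B_k(1+\alpha_k\sigma B_k^{s-1})$. It then applies the MVT bound $\xi\leq B_{k-1}$, which gives exactly $(s-1)\alpha_k\sigma/(1+\alpha_k\sigma B_k^{s-1})^s$. Finally $B_k\leq B_0$ gives the stated bound.

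\textbf{Constant step size.} When $\alpha_i\equiv\alpha$, each summand is a fixed positive constant $c$. Then $B_k\leq(B_0^{1-s}+(s-1)ck)^{-1/(s-1)}=\mathcal O(k^{-1/(s-1)})$, which is \eqref{eq:abstract-bregman-fixed-rate}.
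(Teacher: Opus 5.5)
Your final argument is correct and is the paper's proof: you combine the descent lemma with minimality, iterate directly for $s=1$, and for $s>1$ replace $B_{k-1}$ by its minimal admissible value $B_k(1+\alpha_k\sigma B_k^{s-1})$ using that $t\mapsto t^{1-s}$ is decreasing, bound the mean-value point above by that value, use $B_k\le B_0$, and telescope, which is exactly the paper's proof of the discrete Bihari--LaSalle lemma. The abandoned detours (the ratio bound and the endpoint/root analysis) should simply be deleted from the write-up.
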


\begin{corollary}[Energy-gap and $H^1$-error convergence induced by order-$s$ minimality]
\label{cor:energy-gap-H1-error-convergence}
Assume the hypotheses of
\Cref{thm:energy-bregman-convergence}.
Then, for every $0\leq m<k$,
\begin{equation}
E(u^k)-E(u^*)
\leq
\frac{B_m}
{\displaystyle\sum_{j=m+1}^{k}\alpha_j}.
\end{equation}
Moreover,
\begin{equation}
\|u^k-u^*\|_{H^1(\Omega)}^2
\leq
2\Lambda(C_P^2+1)(E(u^k)-E(u^*)).
\end{equation}

In particular, suppose that $\alpha_k\equiv\alpha>0$. If $s>1$,
then
\begin{equation}
E(u^k)-E(u^*)
=
\mathcal O\left(
k^{-s/(s-1)}
\right),
\end{equation}
and
\begin{equation}
\|u^k-u^*\|_{H^1(\Omega)}
=
\mathcal O\left(
k^{-s/(2(s-1))}
\right).
\end{equation}
If $s=1$, then
\begin{equation}
E(u^k)-E(u^*)
=
\mathcal O((1+\alpha\sigma)^{-k}),
\end{equation}
and
\begin{equation}
\|u^k-u^*\|_{H^1(\Omega)}
=
\mathcal O((1+\alpha\sigma)^{-k/2}).
\end{equation}
\end{corollary}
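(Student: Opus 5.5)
The corollary has three parts: a sum-of-step-sizes bound on the energy gap, a quadratic-growth bound on the \(H^1\) error, and the resulting rates. I will prove them in that order.

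\textbf{Energy-gap bound.} Fix \(0\le m<k\).

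By \Cref{lem:one-step-bregman-descent}, for each \(j\in\{m+1,\dots,k\}\),
\begin{equation*}
    \alpha_j\bigl(E(u^j)-E(u^*)\bigr)\le B_{j-1}-B_j .
\end{equation*}
Summing over \(j=m+1,\dots,k\) telescopes the right-hand side to \(B_m-B_k\le B_m\), using \(B_k\ge0\).

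By \Cref{lem:energy-dissipation}, \(E(u^j)\ge E(u^k)\) for every \(j\le k\). The left-hand side is therefore bounded below by \(\bigl(\sum_{j=m+1}^k\alpha_j\bigr)\bigl(E(u^k)-E(u^*)\bigr)\). Dividing by the sum gives the claimed bound.

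\textbf{\(H^1\) bound.} This uses the quadratic structure in the homogeneous setting \(F=0\).

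Expanding the quadratic energy gives
\begin{equation*}
    E(u^k)-E(u^*)=a(u^*,u^k-u^*)+\tfrac12 a(u^k-u^*,u^k-u^*).
\end{equation*}
Since \(u^k\in K\), the variational inequality \eqref{eq:obstacle-vi} with \(v=u^k\) gives \(a(u^*,u^k-u^*)\ge0\). Hence \(E(u^k)-E(u^*)\ge\tfrac12 a(w,w)\) with \(w=u^k-u^*\in H_0^1(\Omega)\).

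Uniform ellipticity \eqref{eq:uniform-ellipticity} gives \(a(w,w)\ge\Lambda^{-1}\|\nabla w\|_{L^2}^2\). The Poincaré inequality \(\|w\|_{L^2}\le C_P\|\nabla w\|_{L^2}\) then gives \(\|w\|_{H^1}^2\le (C_P^2+1)\|\nabla w\|_{L^2}^2\). Chaining these yields
\begin{equation*}
    \|w\|_{H^1}^2\le 2\Lambda(C_P^2+1)\bigl(E(u^k)-E(u^*)\bigr).
\end{equation*}

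\textbf{Rates.} Take \(\alpha_k\equiv\alpha\) and choose \(m=\lfloor k/2\rfloor\). Then the step-size sum is \(\alpha(k-m)\ge\alpha k/2\).

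For \(s>1\), \Cref{thm:energy-bregman-convergence} gives \(B_m=\mathcal O(m^{-1/(s-1)})=\mathcal O(k^{-1/(s-1)})\). Hence
\begin{equation*}
    E(u^k)-E(u^*)\lesssim k^{-1/(s-1)}\cdot k^{-1}=k^{-s/(s-1)}.
\end{equation*}

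For \(s=1\), use \(m=k-1\) instead. The theorem gives \(B_{k-1}\le(1+\alpha\sigma)^{-(k-1)}B_0\), so
\begin{equation*}
    E(u^k)-E(u^*)\le \alpha^{-1}(1+\alpha\sigma)^{-(k-1)}B_0=\mathcal O\bigl((1+\alpha\sigma)^{-k}\bigr).
\end{equation*}

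In both cases the \(H^1\) rates follow by taking square roots in the \(H^1\) bound.

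The only point that needs care is the choice of \(m\). Taking \(m=0\) loses the extra factor from \(B_m\), and taking \(m=k-1\) loses the factor \(1/k\). The midpoint choice captures both for \(s>1\), while \(m=k-1\) is the right choice for the geometric case. Everything else is routine.
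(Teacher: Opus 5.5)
Your proposal is correct and follows the paper's proof essentially step for step. Both arguments telescope the one-step Bregman descent and use energy monotonicity for the gap bound. Both use the quadratic expansion, the variational inequality tested at $u^k$, ellipticity and Poincar\'e for the $H^1$ bound. Both choose $m=\lfloor k/2\rfloor$ for $s>1$ and $m=k-1$ for $s=1$ to get the rates.
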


It remains to identify the range of minimality orders. We do so
in two regimes. A one-sided growth condition on the Bregman divergence yields
an exact expression for admissible $s \geq 2$,
whereas direct assumptions on the obstacle and free boundary deliver $s=1$
for the Shannon and Spence entropies.

\subsubsection{Sequential strict local minimality of order \texorpdfstring{$s \geq 2$}{s >= 2}}

We first establish sequential strict local minimality of order $s > 1$ using only the
pointwise growth of the Bregman divergence.

\begin{definition}[One-sided Bregman divergence growth exponent]
\label{def:one-sided-bregman-growth}
Let $M>0$. We say that the Legendre function $R(u)=r(u-\phi)$ has one-sided
Bregman growth exponent $\theta\in(0,1]$ on $[0,M]$ if there exists a constant
$C_M>0$ such that
\begin{equation}
\label{eq:one-sided-bregman-growth}
    D_r(a,b)
    \leq
    C_M(b-a)^\theta
    \qquad
    \forall a\in[0,M],\quad b\in(0,M]
    \quad\text{with }a\leq b.
\end{equation}
\end{definition}

The growth exponents for the four model Legendre functions in
\Cref{sec:problem-setting} are listed in the second column of \Cref{tab:convergence-rates-model-legendre}. Their
verification is deferred to \Cref{app:one-sided-growth-exponents}.

\begin{remark}[Relation to the Legendre exponent]
\Cref{def:one-sided-bregman-growth} is related to the Legendre exponent in
\cite[Definition 4.1]{azizian2024rate}. In both settings, the relevant exponent
quantifies the local growth of the Bregman divergence. The difference is
that, in the present infinite-dimensional setting, we impose the one-sided
bounded restriction,
\begin{equation*}
    a\in[0,M],\quad b\in(0,M]
    \quad\text{and }a\leq b.
\end{equation*}
This restriction is adapted to the positive invariance of the iterates and
is what makes the resulting estimate uniform in $k$.
\end{remark}

\begin{remark}[Dependence of the growth constant]
The growth constants for the Shannon and Tsallis entropies are independent of
$M$. In particular, the optimal constant for the Shannon entropy is $C_M=1$,
while the optimal constant for the Tsallis entropy is
$C_M=1/(2-q)$. For the Spence and Kaniadakis entropies, the exponent $\theta$
remains valid on every bounded interval $[0,M]$, but the corresponding constant
$C_M$ generally depends on $M$.
\end{remark}

By \Cref{prop:main-positive-invariance} and
\Cref{lem:uniform-bound-above}, \Cref{def:one-sided-bregman-growth} can be
applied with
\begin{equation}
    a
    =
    u^*-\phi,
    \qquad
    b
    =
    u^k-\phi,
    \qquad
    M
    =
    M_1.
\end{equation}
This gives the first estimate of sequential strict local minimality of order $s = 2/\theta \geq 2$.

\begin{proposition}[Sequential strict local minimality of order $s \geq 2$]
\label{prop:main-energy-bregman}
Assume that $u^0\geq u^*$ a.e. in $\Omega$, and let $M_1$ be the
constant from \Cref{lem:uniform-bound-above}. Suppose that $R$ satisfies
\eqref{eq:one-sided-bregman-growth} with exponent $\theta\in(0,1]$. Then $2/\theta$ is an admissible minimality order. More precisely, there exists a constant $\sigma>0$, depending only on
$\Omega$, $A$, $C_{M_1}$, and $\theta$, such that
\begin{equation*}
    E(u^k)-E(u^*)
    \geq
    \sigma
    \left(
        \int_\Omega D_R(u^*,u^k)\diff x
    \right)^{2/\theta}
    \qquad
    \forall k\geq0.
\end{equation*}
\end{proposition}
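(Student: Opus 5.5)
Combine the pointwise growth bound with an $L^1$ estimate and the quadratic structure of $E$. By \Cref{prop:main-positive-invariance} and \Cref{lem:uniform-bound-above}, for a.e.\ $x$ we may take $a=u^*-\phi\in[0,M_1]$ and $b=u^k-\phi\in(0,M_1]$ with $a\le b$. Then \eqref{eq:one-sided-bregman-growth} gives
\begin{equation*}
    D_R(u^*,u^k)\le C_{M_1}(u^k-u^*)^\theta \quad\text{a.e. in }\Omega.
\end{equation*}
Integrating and applying Jensen's (or H\"older's) inequality with the concave map $t\mapsto t^\theta$, $\theta\in(0,1]$, yields
\begin{equation*}
    B_k\le C_{M_1}|\Omega|^{1-\theta}\|u^k-u^*\|_{L^1(\Omega)}^\theta .
\end{equation*}
By Cauchy--Schwarz and the Poincar\'e inequality on $H_0^1(\Omega)$ (recall $g=0$, so $u^k-u^*\in H_0^1(\Omega)$), we have $\|u^k-u^*\|_{L^1}\le |\Omega|^{1/2}C_P\|\nabla(u^k-u^*)\|_{L^2}$. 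Hence
\begin{equation*}
    B_k^{2/\theta}\le C\,\|\nabla(u^k-u^*)\|_{L^2(\Omega)}^2,
\end{equation*}
with $C$ depending only on $C_{M_1}$, $|\Omega|$, $C_P$ and $\theta$.

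Next, bound the energy gap from below by the gradient norm. Since $E$ is quadratic with $F=0$, for $u^k\in K$ we can expand
\begin{equation*}
    E(u^k)-E(u^*)=a(u^*,u^k-u^*)+\tfrac12 a(u^k-u^*,u^k-u^*).
\end{equation*}
The first term is nonnegative by the variational inequality \eqref{eq:obstacle-vi}, since $u^k\in K$ by \Cref{thm:entropic-pde}. Uniform ellipticity \eqref{eq:uniform-ellipticity} then gives
\begin{equation*}
    E(u^k)-E(u^*)\ge \tfrac{1}{2\Lambda}\|\nabla(u^k-u^*)\|_{L^2}^2 .
\end{equation*}
Combining the two displays gives the claim with $\sigma=(2\Lambda C)^{-1}$. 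This constant depends only on $\Omega$ (through $|\Omega|$ and $C_P$), on $A$ (through $\Lambda$), on $C_{M_1}$ and on $\theta$, as asserted.

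The only delicate point is the legitimacy of the pointwise application at $k=0$ and for all $k$. We must check that $u^k-\phi>0$, which holds by $u^0\in K^\circ$ and \Cref{thm:entropic-pde}. We also need $0\le u^*-\phi\le u^k-\phi\le M_1$, which requires both the ordering hypothesis $u^0\ge u^*$ and the upper bound of \Cref{lem:uniform-bound-above}. Without the one-sided ordering the growth bound is unavailable, so this is where the assumption $u^0\ge u^*$ enters essentially. The rest is routine.
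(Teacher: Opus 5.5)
Your proposal is correct and follows the paper's proof essentially step for step: the pointwise one-sided growth bound, integration over $\Omega$, a H\"older-type estimate, Poincar\'e's inequality, and the lower bound $E(u^k)-E(u^*)\geq \frac{1}{2\Lambda}\|\nabla(u^k-u^*)\|_{L^2(\Omega)}^2$ obtained from the variational inequality. The paper uses a single H\"older step with exponents $2/\theta$ and $2/(2-\theta)$, whereas you pass through $L^1$ (Jensen, then Cauchy--Schwarz); both give the same constant $C_{M_1}|\Omega|^{(2-\theta)/2}C_P^{\theta}$ and hence the same $\sigma$.
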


The preceding proposition provides a general minimality order that can be verified solely from the one-sided
Bregman growth condition. Combining \Cref{prop:main-energy-bregman} with
\Cref{thm:energy-bregman-convergence} and \Cref{cor:energy-gap-H1-error-convergence} gives the following sublinear
convergence rates under constant step sizes.

These rates should be understood as worst-case algebraic guarantees under
the general hypotheses of \Cref{prop:main-energy-bregman}. In particular,
the analysis uses only the positive invariance property and one-sided Bregman growth condition and does not
exploit finer properties of the solution. Consequently, it does not preclude faster convergence for particular problem instances.

\begin{theorem}[Sublinear convergence rates]
\label{thm:main-sublinear-convergence}
Assume that the hypotheses of
\Cref{prop:main-energy-bregman} hold and that
$\alpha_k = \alpha>0$. Then
\begin{equation}
\label{eq:main-Ak-rate}
    B_k
    =
    \mathcal O\left(
        k^{-\theta/(2-\theta)}
    \right).
\end{equation}
Moreover,
\begin{equation}
\label{eq:main-energy-rate}
    E(u^k)-E(u^*)
    =
    \mathcal O\left(
        k^{-2/(2-\theta)}
    \right),
\end{equation}
and
\begin{equation}
\label{eq:main-H1-rate}
    \|u^k-u^*\|_{H^1(\Omega)}
    =
    \mathcal O\left(
        k^{-1/(2-\theta)}
    \right).
\end{equation}
\end{theorem}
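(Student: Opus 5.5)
The theorem follows by substituting the admissible order $s=2/\theta$ from \Cref{prop:main-energy-bregman} into the abstract rates of \Cref{thm:energy-bregman-convergence} and \Cref{cor:energy-gap-H1-error-convergence}. \Cref{prop:main-energy-bregman} applies under the stated hypotheses ($u^0\geq u^*$ and one-sided growth with exponent $\theta\in(0,1]$), so $s=2/\theta\geq 2>1$ is admissible with some $\sigma>0$ independent of $k$. Hence the case $s>1$ of \Cref{thm:energy-bregman-convergence} is the relevant one.

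Compute the exponents. With $s=2/\theta$ we have $s-1=(2-\theta)/\theta$, so $1/(s-1)=\theta/(2-\theta)$. Then \eqref{eq:abstract-bregman-fixed-rate} gives $B_k=\mathcal O(k^{-\theta/(2-\theta)})$, which is \eqref{eq:main-Ak-rate}. For the energy gap, $s/(s-1)=(2/\theta)\cdot\theta/(2-\theta)=2/(2-\theta)$, so \Cref{cor:energy-gap-H1-error-convergence} yields \eqref{eq:main-energy-rate}. The $H^1$ exponent is half of this, $1/(2-\theta)$, which gives \eqref{eq:main-H1-rate}.

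The only point needing care is checking that the corollary's energy rate is really available under constant steps, in case one wants it self-contained. Take $m=\lfloor k/2\rfloor$ in the bound $E(u^k)-E(u^*)\leq B_m/\sum_{j=m+1}^k\alpha$. The denominator is $\gtrsim \alpha k$, and $B_m=\mathcal O(k^{-\theta/(2-\theta)})$. Since $1+\theta/(2-\theta)=2/(2-\theta)$, this gives $\mathcal O(k^{-2/(2-\theta)})$. The $H^1$ estimate then follows from $\|u^k-u^*\|_{H^1}^2\leq 2\Lambda(C_P^2+1)(E(u^k)-E(u^*))$. I expect no real obstacle: all the substantive work sits in \Cref{prop:main-energy-bregman}, and here one only has to track that the constants ($\sigma$, $B_0$, $\alpha$) are $k$-independent so they can be absorbed into the $\mathcal O$.
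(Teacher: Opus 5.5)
Your proposal is correct and matches the paper's proof. The paper likewise takes the admissible order $s=2/\theta>1$ from \Cref{prop:main-energy-bregman}, reads off the three rates from \Cref{thm:energy-bregman-convergence} and \Cref{cor:energy-gap-H1-error-convergence} using the same exponent arithmetic, and your self-contained check with $m=\lfloor k/2\rfloor$ is exactly the argument the paper uses inside the proof of that corollary.
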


\Cref{tab:convergence-rates-model-legendre} summarizes the fixed-step sublinear
convergence rates for the four model Legendre functions.

\begin{table}[htbp]
\centering
\renewcommand{\arraystretch}{1.35}
\footnotesize
\begin{tabular}{lccc}
\toprule
Legendre function
&
Growth exponent $\theta$
&
$\displaystyle \int_\Omega D_R(u^*,u^k)\diff x$
&
$\displaystyle \|u^k-u^*\|_{H^1(\Omega)}$
\\
\midrule
Shannon entropy
&
1
&
$\mathcal O(k^{-1})$
&
$\mathcal O(k^{-1})$
\\
Spence entropy
&
1
&
$\mathcal O(k^{-1})$
&
$\mathcal O(k^{-1})$
\\
Tsallis entropy, $1<q<2$
&
$2 - q$
&
$\displaystyle \mathcal O\left(k^{-(2-q)/q}\right)$
&
$\displaystyle \mathcal O\left(k^{-1/q}\right)$
\\
Kaniadakis entropy, $0<\kappa<1$
&
$1 - \kappa$
&
$\displaystyle \mathcal O\left(k^{-(1-\kappa)/(1+\kappa)}\right)$
&
$\displaystyle \mathcal O\left(k^{-1/(1+\kappa)}\right)$
\\
\bottomrule
\end{tabular}
\caption{One-sided Bregman growth exponents and the resulting fixed-step sublinear convergence rates for the model Legendre functions.}
\label{tab:convergence-rates-model-legendre}
\end{table}

\begin{remark}[Sharpness]
    For the Shannon and Tsallis entropies, the sublinear convergence rates in
    \Cref{tab:convergence-rates-model-legendre} are sharp for the families of
    examples constructed in \Cref{app:weak-sharpness-models}.
    Indeed, the boundary-degenerate endpoint admits explicit proximal point
    iterations with exactly the sublinear decay rates predicted by
    \Cref{thm:main-sublinear-convergence}, while the stability results transfer
    these lower bounds to the approximating families with $0<\delta\leq1$.
    Consequently, because the constructed family is a subclass of the
    problems considered in this section, the corresponding sublinear rates
    cannot, in general, be improved.
\end{remark}

\subsubsection{Sequential strict local minimality of order \texorpdfstring{$s=1$}{s = 1}}
\label{sec:linear-convergence}
We now turn from the sublinear convergence theory to the linear convergence
result. For the Shannon and Spence entropies, the one-sided growth condition
has $\theta=1$. Thus \Cref{prop:main-energy-bregman} only shows that $s=2$ is
an admissible minimality order. This estimate yields the
algebraic rates in \Cref{tab:convergence-rates-model-legendre}, but it does not
by itself imply linear convergence.

We use additional information about the exact solution $u^*$ and its free boundary to
improve the minimality order from $s = 2$ to $s
= 1$. The decisive ingredients are the refined Bregman bound for the Shannon
and Spence entropies, and some additional assumptions on the free boundary and
obstacle.

Recall the definition of the active set \(\Omega_0\), the inactive set
\(\Omega_+\) and the free boundary \(\Gamma\),
\begin{equation*}
    \Omega_0 \coloneqq  \{x\in\Omega \mid u^*(x)=\phi(x)\},
    \quad
    \Omega_+ \coloneqq  \Omega \setminus \Omega_0,
    \quad
    \Gamma\coloneqq \partial\Omega_0\cap\Omega.
\end{equation*}
We assume that the multiplier \(E'(u^*)\in H_0^1(\Omega)'\) can be represented by a
function \(\lambda^*\in L^\infty(\Omega)\), namely
\begin{equation}
\label{eq:E-prime-representation}
    \langle E'(u^*),v\rangle
    =
    \int_\Omega \lambda^* v\diff x
    \qquad \forall v\in H_0^1(\Omega).
\end{equation}
As throughout this subsection, we retain the homogeneous setting
\eqref{eq:homogeneous-setting}. We impose some additional assumptions on
the free boundary and obstacle below.

Motivated by \cite[Theorem~2.5]{aleksanyan2024quantitative}, we use the
following notion.
\begin{definition}[Regular free-boundary point]
\label{def:regular-free-boundary-point}
Let $w^* \coloneqq u^*-\phi$,
and set
\begin{equation*}
    q_\phi \coloneqq -\nabla\cdot(A\nabla\phi).
\end{equation*}
A point \(y\in\Gamma\) is called a regular free-boundary point if, up to a sequence of radii,
\begin{equation}
    \frac{w^*(y+rx)}{r^2}
    \to
    \frac{q_\phi(y)}
         {2\,\nu_y^{\mathsf T}A(y)\nu_y}
    \bigl[(x\cdot\nu_y)_+\bigr]^2 \quad\text{in }C^1_{\mathrm{loc}}(\mathbb R^d), \text{as } r\to0,
\end{equation}
for some unit vector $\nu_y\in\mathbb S^{d-1}$, where
$(t)_+\coloneqq\max\{t,0\}$ denotes the positive part of \(t\).
\end{definition}

The distinction between regular and singular free-boundary points is
illustrated schematically in \Cref{fig:regular-singular-free-boundary}. The
configuration on the left depicts a free boundary consisting entirely of
regular points, whereas the configuration on the right contains singular
free-boundary points. For further illustrations of regular and singular
free-boundary points, we refer the interested reader to \cite[Figures~7.2
and~7.3]{figalli2018free}.

For the linear convergence analysis, we restrict our attention to free
boundaries consisting entirely of regular points and impose the following
additional regularity and geometric assumptions.

\begin{assumption}
\label{ass:strict-complementarity-free-boundary}
We assume the following conditions:
\begin{itemize}
    \item The domain $\Omega\subset\mathbb R^d$ is bounded and of class $C^{1,\alpha}$.

    \item The obstacle satisfies \(\phi \in C^{2,\alpha}(\overline
    \Omega)\), and there exists \(c_{\phi} >0\) such that
    \begin{equation}
    \label{eq:strong-super-harmonicity}
        -\nabla \cdot(A(x) \nabla \phi) \geq c_{\phi} \qquad\text{in }\Omega.
    \end{equation}

    \item The free boundary \(\Gamma\) is a compact embedded
    \(C^{2,\alpha}\) submanifold of dimension \(d-1\). Every point \(y\in\Gamma\) is a
    regular free-boundary point; see \Cref{def:regular-free-boundary-point}.
\end{itemize}
\end{assumption}

\begin{remark}[On the regularity of free boundary]
The \(C^{2,\alpha}\)-regularity of \(\Gamma\) is imposed here as an additional
assumption. This assumption is inspired by the classical regularity theory for
the obstacle problem; see \cite{caffarelli1977regularity} and
\cite{figalli2018free}. In particular, when \(A=I\) and the obstacle is
sufficiently smooth, the regular part of the free boundary enjoys higher-order
regularity; see \cite[Theorem 1]{kinderlehrer1977regularity}. Moreover, for
variable-coefficient obstacle problems with \(A\in C^{0,\alpha}\), the regular
part of the free boundary is of class \(C^{1,\beta}\) for some
\(\beta\in(0,1)\); see \cite[Theorem 1.1]{andreucci2023classical}. Thus, the
assumed \(C^{2,\alpha}\)-regularity is attainable in the classical smooth-data
setting and is compatible with the known regularity theory for
variable-coefficient problems. We do not derive a higher-order regularity result
for the variable-coefficient problem considered here. This additional regularity
is required below for the properties given in \Cref{prop:strict-complementarity-free-boundary}.
\end{remark}

\begin{remark}[Geometry near regular free-boundary points]
\label{rem:geometry-near-regular-points}
Since every point of $\Gamma$ is a regular free-boundary point, the
local regularity theory
\cite[Theorem 4.12, in particular (4.25)]{focardi2015calculus}
implies that the active set $\Omega_0$ and the inactive
set $\Omega_+$ lie locally on opposite sides of the free boundary.
In particular, for every $y\in\Gamma$, one
side of a sufficiently small neighborhood of $y$ is contained in
$\operatorname{int}(\Omega_0)$. Consequently, if
$\Gamma\neq\varnothing$, then
\[
    \operatorname{int}(\Omega_0)\neq\varnothing
    \qquad\text{and}\qquad
    |\Omega_0|>0.
\]
This geometric structure will be used in the proofs of
\Cref{prop:strict-complementarity-free-boundary} and
\Cref{prop:main-local-bregman-coercivity} below.
\end{remark}

\begin{proposition}
\label{prop:strict-complementarity-free-boundary}
Let $w^* = u^* - \phi$.
Under
\Cref{ass:strict-complementarity-free-boundary}, the following properties hold:
\begin{itemize}
    \item Strict complementarity:
    \begin{equation}
    \label{eq:strict-complementarity-density}
        \lambda^* \geq c_{\phi} > 0, \quad \text{a.e. in }\; \Omega_0.
    \end{equation}

    \medskip

    \item One-sided tubular parametrization. There exist \(\rho>0\) and a \(C^{1,\alpha}\)-diffeomorphism
    \begin{equation}
    \label{eq:ass-tubular-parametrization}
        \Psi:\Gamma\times(0,\rho)\to \mathcal N_\rho^+(\Gamma),
    \end{equation}
     with
    \begin{equation*}
        \Psi(y,s)=y+s\nu(y),
    \end{equation*}
    where \(\nu\) is the unit normal pointing into \(\Omega_+\) and
    \begin{equation*}
        \mathcal N_\rho^+(\Gamma) = \Omega_+ \cap \{\dist(x,\Gamma) < \rho\}.
    \end{equation*} Moreover,
    \begin{equation}
    \label{eq:tubular-jacobian-comparable}
        0<C_J^{-1}\leq |J_\Psi(y,s)|\leq C_J<\infty
        \qquad\text{for a.e. }(y,s)\in\Gamma\times(0,\rho).
    \end{equation}
    Consequently, for every \(e\in H^1(\Omega)\),
    \begin{equation}
    \label{eq:composition-regularity}
        e\circ\Psi\in L^2(\Gamma;H^1(0,\rho))
    \end{equation}
    and
    \begin{equation}
    \label{eq:gradient-bound}
        |\partial_s(e\circ\Psi)(y,s)|
        \leq |\nabla e|(\Psi(y,s))
        \qquad\text{for a.e. }(y,s).
    \end{equation}

    \item Quadratic separation near the free boundary. The function \(w^*\)
    separates quadratically from zero near \(\Gamma\). More precisely, there
    exists a constant \(0<c_1<\infty\) such that
    \begin{equation*}
        c_1^{-1}\,\dist(x,\Gamma)^2
        \leq
        w^*(x)
        \leq
        c_1\,\dist(x,\Gamma)^2
    \end{equation*}
    for a.e. \(x\in \mathcal{N}^+_\rho(\Gamma)\). Equivalently, in the
    coordinates \eqref{eq:ass-tubular-parametrization}, it holds that
    \begin{equation}
    \label{eq:quadratic-separation-parametrized}
        c_1^{-1} s^2
        \leq
        w^*(\Psi(y,s))
        \leq
        c_1 s^2
        \qquad
        \text{for a.e. }(y,s)\in\Gamma\times(0,\rho).
    \end{equation}

    \medskip

    \item Separation away from the free boundary. The function \(w^*\) is
    uniformly separated from zero away from \(\Gamma\): for every
    \(\rho_0\in(0,\rho)\), there exists \(c_{\rho_0}>0\) such that
    \begin{equation}
    \label{eq:separation-away-free-boundary}
        w^*(x)\geq c_{\rho_0}
        \qquad
        \text{for a.e. }x\in
        \Omega_+\cap\{\dist(x,\Gamma)\geq \rho_0\}.
    \end{equation}
\end{itemize}
\end{proposition}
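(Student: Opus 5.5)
We take the four items in order. Each uses the structure in \Cref{ass:strict-complementarity-free-boundary} together with the standard regularity theory for the obstacle problem.

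\emph{Strict complementarity.} Since $\phi\in C^{2,\alpha}(\overline\Omega)$, $A\in C^{1,\alpha}$, $F=0$ and $g=0$, classical $W^{2,p}$ (indeed $C^{1,1}_{\mathrm{loc}}$) regularity for obstacle problems gives $u^*\in W^{2,p}_{\mathrm{loc}}$ for all $p<\infty$. Hence the multiplier $\lambda^*=-\nabla\cdot(A\nabla u^*)$ is an $L^\infty$ function, consistent with \eqref{eq:E-prime-representation}. On $\Omega_0$ we have $w^*=u^*-\phi=0$. Because $w^*\in W^{2,p}_{\mathrm{loc}}$, the Stampacchia-type property gives $\nabla w^*=0$ and $D^2w^*=0$ a.e.\ on $\{w^*=0\}$. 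Therefore $\nabla\cdot(A\nabla w^*)=0$ a.e.\ on $\Omega_0$, and
\[
\lambda^*=-\nabla\cdot(A\nabla\phi)=q_\phi\geq c_\phi \quad\text{a.e.\ in }\Omega_0,
\]
by \eqref{eq:strong-super-harmonicity}.

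\emph{Tubular parametrization.} Because $\Gamma$ is a compact embedded $C^{2,\alpha}$ hypersurface, its unit normal $\nu$ is $C^{1,\alpha}$. By \Cref{rem:geometry-near-regular-points}, $\Omega_+$ lies locally on one side of $\Gamma$, so $\nu$ can be oriented into $\Omega_+$ consistently on each component.

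The map $\Psi(y,s)=y+s\nu(y)$ is then $C^{1,\alpha}$ with $D\Psi(y,0)$ invertible. The inverse function theorem, combined with compactness and a standard injectivity argument (reach of a $C^2$ manifold, $\rho<1/\max|\text{curvature}|$), gives a diffeomorphism onto $\{0<\dist(x,\Gamma)<\rho\}$ on the $\Omega_+$ side. Shrinking $\rho$ further, we also use that $\Gamma$ is at positive distance from $\partial\Omega$; this holds because $w^*>0$ near $\partial\Omega$ by \eqref{eq:boundary-compatibility}. We must also check that this one-sided tube coincides with $\mathcal N_\rho^+(\Gamma)$. The danger is that active points on the $+$ side could appear; this is excluded by \Cref{rem:geometry-near-regular-points} and compactness.

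The Jacobian is $|J_\Psi|=\prod_i(1-s\kappa_i(y))$, where $\kappa_i$ are the principal curvatures. It is continuous and equals $1$ at $s=0$, which gives \eqref{eq:tubular-jacobian-comparable}.

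For the composition property \eqref{eq:composition-regularity}, first take smooth $e$ and write $\partial_s(e\circ\Psi)=\nabla e(\Psi)\cdot\nu(y)$, which gives \eqref{eq:gradient-bound}. Change of variables with the Jacobian bounds then gives the $L^2(\Gamma;H^1(0,\rho))$ bound, and density extends it to all $e\in H^1(\Omega)$.

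\emph{Quadratic separation.} This is the main obstacle: passing from the pointwise, subsequential blow-up in \Cref{def:regular-free-boundary-point} to uniform two-sided bounds. We take $s=\dist(x,\Gamma)$ and $x=\Psi(y,s)$.

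For the upper bound, $w^*\in C^{1,1}$ near $\Gamma$, and $w^*=|\nabla w^*|=0$ on $\Gamma$. Taylor expansion along the normal segment gives $w^*\le \tfrac12\|D^2w^*\|_\infty s^2$.

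For the lower bound we plan two steps.

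\begin{enumerate}
\item We aim to show $w^*\in C^{2,\alpha}$ up to $\Gamma$ from the $\Omega_+$ side. Here $w^*$ solves
\[
-\nabla\cdot(A\nabla w^*)=q_\phi \ \text{ in }\Omega_+,\qquad w^*=\partial_\nu w^*=0 \ \text{ on }\Gamma,
\]
and $\Gamma$ is $C^{2,\alpha}$, so Schauder theory for the Dirichlet problem gives $C^{2,\alpha}$ regularity up to $\Gamma$.
\item On $\Gamma$ all tangential second derivatives and the mixed derivatives vanish, because $w^*=0$ and $\nabla w^*=0$ along $\Gamma$. The equation at $y$ therefore reduces to $\nu^{\mathsf T}A\nu\,\partial_{\nu\nu}w^*(y)=q_\phi(y)\ge c_\phi$, which matches the blow-up limit in \Cref{def:regular-free-boundary-point}. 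It follows that $\partial_{ss}(w^*\circ\Psi)(y,0)\ge c_\phi/\Lambda$.
\end{enumerate}

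Uniform continuity of $D^2w^*$ on the compact set $\Gamma$ then gives $\partial_{ss}(w^*\circ\Psi)\ge c_\phi/(2\Lambda)$ for $s<\rho$, after shrinking $\rho$. Integrating twice from $s=0$, using zero value and zero slope there, yields $w^*\ge c\,s^2$. If boundary Schauder regularity is unavailable, a fallback is a barrier or contradiction-compactness argument: take sequences $y_n\to y$ and $r_n\to0$, and use the nondegeneracy estimate $\sup_{B_r(y)}w^*\ge c r^2$ together with regularity of the blow-ups.

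\emph{Separation away from $\Gamma$.} The set $S=\overline{\Omega_+}\cap\{\dist(x,\Gamma)\ge\rho_0\}$ is compact. The function $w^*$ is continuous on $\overline\Omega$ and positive on $\Omega_+$, and $w^*\ge\delta_0$ on $\partial\Omega$ by \eqref{eq:boundary-compatibility}.

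Suppose a limit point $x\in S$ has $w^*(x)=0$. Then $x\in\Omega_0$ and $x$ is a limit of points of $\Omega_+$, so $x\in\partial\Omega_0\cap\Omega=\Gamma$. This contradicts $\dist(x,\Gamma)\ge\rho_0$. Hence $\min_S w^*=c_{\rho_0}>0$, which is \eqref{eq:separation-away-free-boundary}.
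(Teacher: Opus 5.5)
Your proposal is correct. For the tubular coordinates, the quadratic separation, and the separation away from $\Gamma$, it follows the paper's route essentially step by step:
\begin{itemize}
\item normal coordinates and the curvature formula for $J_\Psi$;
\item one-sided Schauder regularity of $w^*$ up to $\Gamma$, $\nabla w^*=0$ on $\Gamma$, the reduction $\nu^{\mathsf T}A\nu\,\partial_{\nu\nu}w^*=q_\phi$, and a uniform Taylor argument in $s$;
\item a compactness argument forcing any zero of $w^*$ on $\overline{\Omega_+}\cap\{\dist(x,\Gamma)\ge\rho_0\}$ onto $\Gamma$.
\end{itemize}

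The genuine difference is in strict complementarity. The paper computes $\lambda^*=q_\phi$ distributionally only on the open set $\operatorname{int}(\Omega_0)$, where $u^*=\phi$. It then discards $\Omega_0\setminus\operatorname{int}(\Omega_0)\subset\Gamma$ as a null set, since $\Gamma$ is a $C^{2,\alpha}$ hypersurface. You instead import $W^{2,p}_{\mathrm{loc}}$ regularity of the obstacle solution and apply the Stampacchia lemma to $w^*$ and to $\nabla w^*$. This gives $\nabla\cdot(A\nabla w^*)=0$ a.e.\ on $\{w^*=0\}$, hence $\lambda^*=q_\phi$ a.e.\ on all of $\Omega_0$. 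Your version uses no free-boundary structure. As a by-product it shows $\lambda^*=q_\phi\mathbf 1_{\Omega_0}\in L^\infty$, which the paper simply assumes in \eqref{eq:E-prime-representation}. The price is an external regularity theorem. The paper's argument is more elementary under its standing assumptions, but it genuinely uses $|\Gamma|=0$.

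Two small fixes are needed. First, in $\Omega_+$ the equation is $\nabla\cdot(A\nabla w^*)=q_\phi$, not $-\nabla\cdot(A\nabla w^*)=q_\phi$. With your sign, $\partial_{\nu\nu}w^*$ would be negative. Your later identity $\nu^{\mathsf T}A\nu\,\partial_{\nu\nu}w^*=q_\phi$ already uses the correct sign. Second, you assert continuity of $w^*$ on $\overline\Omega$ and $w^*>0$ near $\partial\Omega$ without justification. You use both for $\dist(\Gamma,\partial\Omega)>0$ and in the last item. The paper supplies this by testing the variational inequality with $(u^*)_+\in K$ to get $u^*\ge0$. Then $w^*\ge-\phi\ge\delta_0/2$ in a boundary strip, by \eqref{eq:boundary-compatibility} and continuity of $\phi$. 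On that strip $u^*$ is $A$-harmonic and hence continuous up to $\partial\Omega$.
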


The four properties stated in \Cref{prop:strict-complementarity-free-boundary} are illustrated in \Cref{fig:free-boundary-properties}. The proof of \Cref{prop:strict-complementarity-free-boundary} is deferred to
\Cref{app:strict-complementarity-free-boundary}, since it is independent of the
convergence mechanism.

\begin{figure}[htbp]
    \centering

    \begin{subfigure}[t]{0.48\textwidth}
        \centering
        \includegraphics[width=0.8\textwidth]{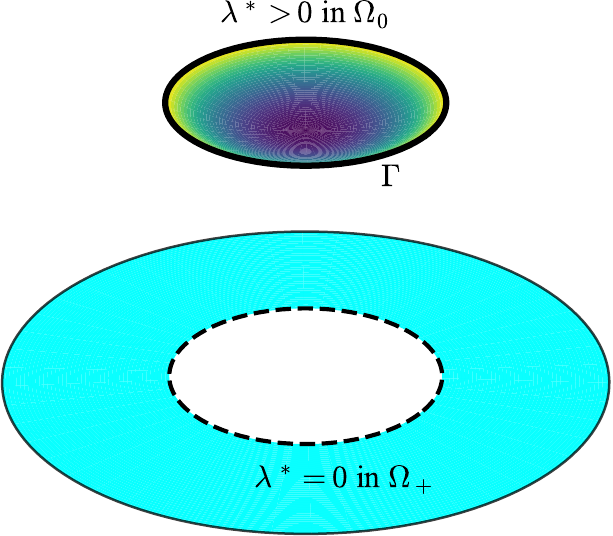}
        \caption{Strict complementarity.}
    \end{subfigure}
    \hfill
    \begin{subfigure}[t]{0.48\textwidth}
        \centering
        \raisebox{1.0cm}
        {\includegraphics[width=\textwidth]{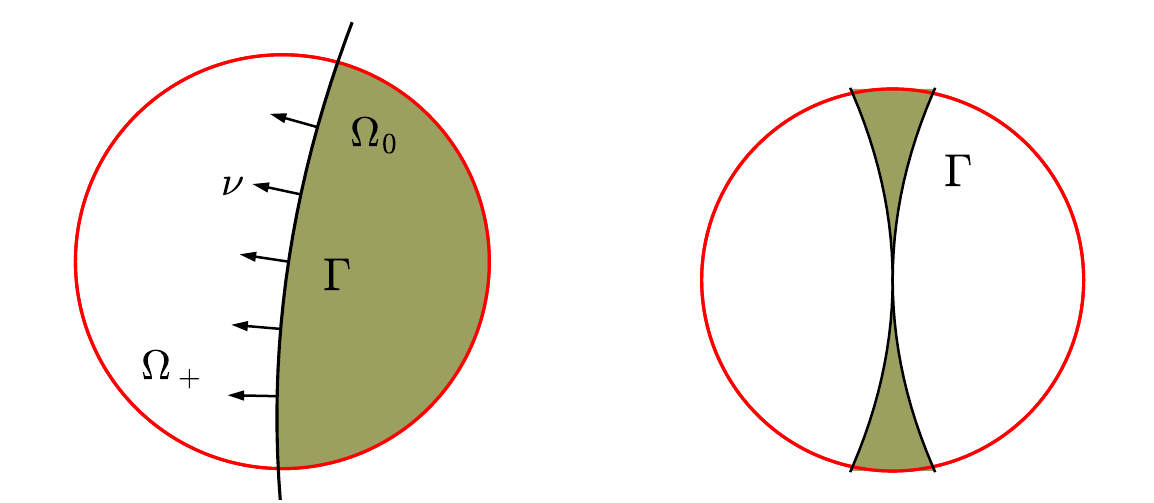}}
        \caption{One-sided tubular parametrization.}
        \label{fig:regular-singular-free-boundary}
    \end{subfigure}

    \vspace{0.5em}

    \begin{subfigure}[t]{0.48\textwidth}
        \centering
        \includegraphics[width=0.9\textwidth]{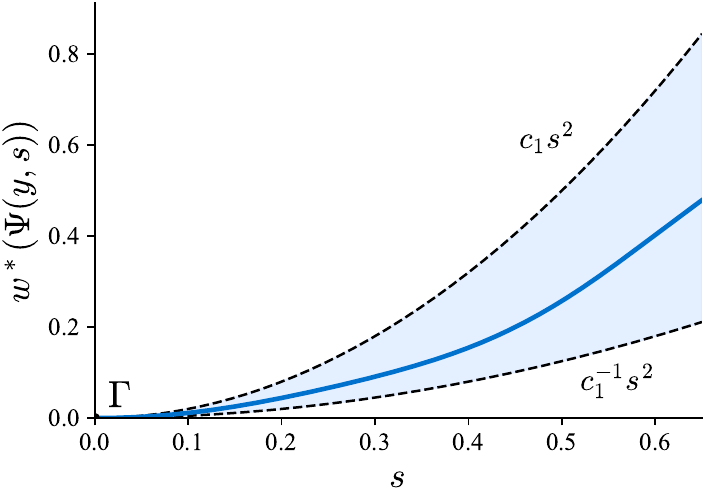}
        \caption{Quadratic separation near free boundary.}
    \end{subfigure}
    \hfill
    \begin{subfigure}[t]{0.48\textwidth}
        \centering
        \includegraphics[width=0.9\textwidth]{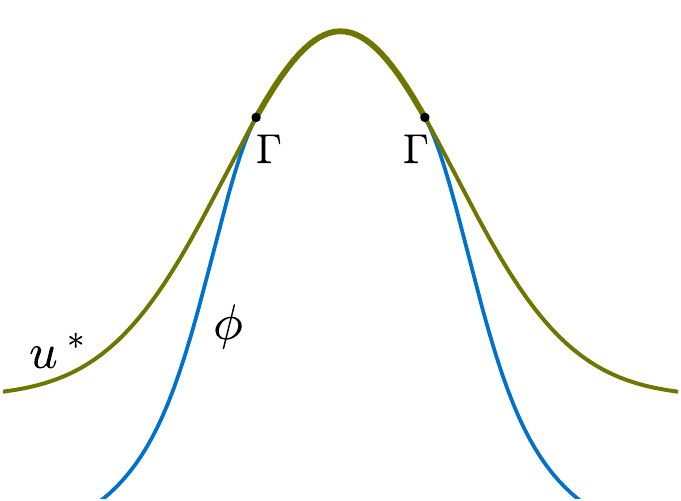}
        \caption{Separation away from free boundary.}
    \end{subfigure}

    \caption{Illustration of the geometric properties near the free boundary in
    \Cref{prop:strict-complementarity-free-boundary}. (a) Strict
    complementarity: \(\lambda^*\) is strictly positive on \(\Omega_0\) and
    vanishes on \(\Omega_+\). (b) One-sided tubular parametrization: near
    \(\Gamma\), points in \(\Omega_+\) can be represented as
    \(x=\Psi(y,s)=y+s\nu(y)\), where \(y\in\Gamma\) and \(s>0\). The left figure
    satisfies this assumption, whereas the right figure depicts a singular
    geometry excluded by the assumption. (c) Quadratic separation: along the
    normal direction into \(\Omega_+\), the gap \(w^*\) locally satisfies
    \(w^*(\Psi(y,s))\sim s^2\). (d) Separation away from the free boundary: the gap $w^*$ is uniformly positive on the portion of the inactive set lying a fixed positive distance away from \(\Gamma\).}
    \label{fig:free-boundary-properties}
\end{figure}

The geometric properties in \Cref{prop:strict-complementarity-free-boundary}
allow the energy gap to control the Bregman error linearly on bounded subsets
lying above $u^*$. Strict complementarity controls the error on the active set.
The tubular parametrization and quadratic separation control the transition
across the free boundary, while separation away from the free boundary prevents
additional degeneracy in the inactive region.

\begin{proposition}[Sequential strict local minimality of order $s = 1$]
\label{prop:main-local-bregman-coercivity}
Assume that \Cref{ass:strict-complementarity-free-boundary} holds and that
$u^0\geq u^*$ a.e. in $\Omega$. Then, for the Shannon and Spence entropies,
$s = 1$ is an admissible minimality order. More precisely, there
exists a constant $\sigma>0$, depending on the problem data and on the
constants $M_1$ and $M_2$ in \Cref{lem:uniform-bound-above} and
\Cref{lem:uniform-H1-bound}, respectively, but independent of $k$, such
that
\begin{equation}
\label{eq:main-linear-energy-bregman}
    E(u^k)-E(u^*)
    \geq \sigma \int_\Omega D_R(u^*,u^k)\diff x
    \qquad \forall k\geq 0.
\end{equation}
\end{proposition}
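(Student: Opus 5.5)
Set $e_k \coloneqq u^k-u^*\ge 0$ (by \Cref{prop:main-positive-invariance}) and $w^*=u^*-\phi$. Since $E$ is quadratic and the multiplier is represented by $\lambda^*$ as in \eqref{eq:E-prime-representation}, we have the exact expansion
\begin{equation*}
    E(u^k)-E(u^*)=\int_\Omega \lambda^* e_k\diff x+\tfrac12 a(e_k,e_k).
\end{equation*}
Both terms are nonnegative: $\lambda^*\ge0$ vanishes on $\Omega_+$, and $\lambda^*\ge c_\phi$ on $\Omega_0$ by strict complementarity. On the Bregman side, I use the refined pointwise bound for the Shannon and Spence entropies. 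For Shannon, $D_r(a,b)=a\log(a/b)-a+b\le (b-a)^2/b$, and the Spence divergence satisfies a comparable bound. Together with $D_r(a,b)\le C(b-a)$ (the case $\theta=1$) this gives
\begin{equation*}
    D_R(u^*,u^k)\le C\min\Bigl\{e_k,\ \frac{e_k^2}{w^*+e_k}\Bigr\}\le C\min\Bigl\{e_k,\ \frac{e_k^2}{w^*}\Bigr\},
\end{equation*}
with $C$ depending on $M_1$ for Spence. The plan is to split $\Omega$ into three regions, $\Omega_0$, the tube $\mathcal N_\rho^+(\Gamma)$, and the far region $\Omega_+\cap\{\dist(x,\Gamma)\ge\rho\}$, and to bound $\int D_R$ over each by a multiple of the energy gap.

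\emph{Active set and far region.} On $\Omega_0$ we have $\int_{\Omega_0}D_R\le C\int_{\Omega_0}e_k\le (C/c_\phi)\int\lambda^*e_k$. On the far region, \eqref{eq:separation-away-free-boundary} gives $w^*\ge c_\rho$. Hence $D_R\le C e_k^2/c_\rho$, which is bounded by $C\|e_k\|_{L^2}^2\le C\,a(e_k,e_k)$ via Poincaré and \eqref{eq:uniform-ellipticity}. Neither of these regions needs the uniform bound $M_2$.

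\emph{Tube region (main obstacle).} Here $w^*\sim s^2$ degenerates, so we use the coordinates $x=\Psi(y,s)$ and the bound $D_R\lesssim e^2/s^2$, with $\tilde e(y,s)\coloneqq e_k(\Psi(y,s))$. The key one-dimensional tool is a Hardy inequality in $s$:
\begin{equation*}
    \int_0^\rho\frac{(\tilde e(s)-\tilde e(0))^2}{s^2}\diff s\le 4\int_0^\rho|\partial_s\tilde e|^2\diff s.
\end{equation*}
The boundary value $\tilde e(y,0)$ is the trace of $e_k$ on $\Gamma$, which need not vanish. It must be controlled by the active-set term, since $\int\lambda^*e_k\ge c_\phi\int_{\Omega_0}e_k$ and $\Omega_0$ contains a one-sided collar of $\Gamma$ (\Cref{rem:geometry-near-regular-points}). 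So I split $\tilde e=(\tilde e-\tilde e(0))+\tilde e(0)$. For the first piece, Hardy together with \eqref{eq:gradient-bound} and the Jacobian bound \eqref{eq:tubular-jacobian-comparable} yields $\lesssim a(e_k,e_k)$.

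For the piece $\tilde e(y,0)$, the crude bound $e^2/s^2$ is not integrable near $s=0$, so I instead use $D_R\le C\min\{e,e^2/s^2\}$, which gives $\int_0^\rho\min\{t,t^2/s^2\}\diff s\lesssim t^{3/2}$ for $t=\tilde e(y,0)$. Using $t\le C(M_1)$, this becomes $\lesssim t$. I then bound the trace $\int_\Gamma \tilde e(y,0)\diff y$ by a one-sided trace inequality on an interior collar of $\Omega_0$:
\begin{equation*}
    \|e_k\|_{L^1(\Gamma)}\lesssim \|e_k\|_{L^1(\text{collar})}+\|\nabla e_k\|_{L^1(\text{collar})}.
\end{equation*}
The $L^1$ term is controlled by $\int\lambda^*e_k$. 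The gradient term is only controlled as $\|\nabla e_k\|_{L^2}\lesssim a(e_k,e_k)^{1/2}$, which is \emph{not} linear in the gap, and this is the delicate point.

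To close the estimate, I would first try to use the uniform bound $\|\nabla e_k\|\le 2M_2$ (so $a(e_k,e_k)^{1/2}\ge a(e_k,e_k)/C(M_2)$ points the wrong way). The better route is to avoid the $L^1$ gradient entirely. Apply the min bound with the mixed form of the Cauchy–Schwarz/trace estimate,
\begin{equation*}
    \|e\|_{L^1(\Gamma)}\lesssim \epsilon^{-1}\|e\|_{L^1(\text{collar}_\epsilon)}+\epsilon^{1/2}\|\nabla e\|_{L^2},
\end{equation*}
and optimize $\epsilon$ together with the $t^{3/2}$ estimate via Young's inequality. This should produce a bound by $\int\lambda^*e_k+a(e_k,e_k)$ plus a term absorbed using $M_1$ and $M_2$, which explains why $\sigma$ depends on $M_1$ and $M_2$. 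This trace term near $\Gamma$ is where I expect the real difficulty. Once the three regional bounds are in hand, summing them gives \eqref{eq:main-linear-energy-bregman}.
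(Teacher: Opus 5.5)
\textbf{There is a genuine gap at the trace step, which is the step you flagged as the real difficulty.} Everything up to that point is sound and follows the paper's structure: the three-region split, the active-set and far-region bounds, the split $\tilde e=(\tilde e-\tilde e(0))+\tilde e(0)$ with Hardy, and the bound $\int_0^\rho\min\{t,t^2/s^2\}\,\mathrm{d}s\lesssim t^{3/2}$. Together these amount to a proof of the paper's one-dimensional weighted lemma.

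The failure comes when you replace $t^{3/2}$ by $C(M_1)\,t$. This throws away exactly the homogeneity that makes the estimate close. You then need $\|e\|_{L^1(\Gamma)}\lesssim\|e\|_{L^1(\Omega_0)}+\|\nabla e\|_{L^2(\Omega)}^2$, and that inequality is false for small $e$. Take a boundary layer with trace $\eta$ on $\Gamma$ that decays linearly to zero over a width $\ell=\eta^{1/2}$. The left side is of order $\eta$, while the right side is of order $\eta\ell+\eta^2/\ell\sim\eta^{3/2}$.

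Your $\epsilon$-optimization cannot repair this. With $A=\|e\|_{L^1}$ and $B=\|\nabla e\|_{L^2}$, minimizing $\epsilon^{-1}A+\epsilon^{1/2}B$ gives $A^{1/3}B^{2/3}\le(A+B^2)^{2/3}$. So you only obtain $\int_\Omega D_R(u^*,u^k)\,\mathrm{d}x\lesssim\bigl(E(u^k)-E(u^*)\bigr)^{2/3}$, which is minimality order $s=3/2$, not $s=1$. The bound $M_2$ cannot rescue this, because it only controls large quantities and the problem here is small ones.

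\textbf{The missing idea is to keep the power $3/2$ and apply the $W^{1,1}$ trace theorem to $e^{3/2}$} on $\Omega_0$ (or on your collar). This gives
$\|e\|_{L^{3/2}(\Gamma)}^{3/2}\lesssim\int_{\Omega_0}e^{3/2}\,\mathrm{d}x+\tfrac32\int_{\Omega_0}e^{1/2}|\nabla e|\,\mathrm{d}x$.

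Both terms on the right are then linear in the energy gap:
\begin{itemize}
\item By Cauchy--Schwarz and Young, $\int e^{1/2}|\nabla e|\le\|e\|_{L^1(\Omega_0)}^{1/2}\|\nabla e\|_{L^2}\le\tfrac12\|e\|_{L^1(\Omega_0)}+\tfrac12\|\nabla e\|_{L^2}^2$.
\item Since $0\le e\le u^k-\phi\le M_1$, you have $\int e^{3/2}\le M_1^{1/2}\|e\|_{L^1(\Omega_0)}$. The paper instead bounds this term via a Poincar\'e-with-mean argument and the $H^1$ bound, which is where $M_2$ enters.
\end{itemize}
Inserting this into your $t^{3/2}$ estimate in place of the $L^1$ trace, and summing with your active-set, far-region and Hardy bounds, gives \eqref{eq:main-linear-energy-bregman}. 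This is precisely the paper's trace interpolation lemma.
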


Combining \Cref{prop:main-local-bregman-coercivity} with
\Cref{thm:energy-bregman-convergence} and the uniform ellipticity of the
quadratic energy gives the following linear convergence rates under constant
step sizes.

\begin{theorem}[Linear convergence rates]
\label{thm:main-linear-convergence}
Assume that the hypotheses of
\Cref{prop:main-local-bregman-coercivity} hold and that
$\alpha_k=\alpha>0$. Let
$\sigma>0$ be the constant in
\eqref{eq:main-linear-energy-bregman}, and set
\begin{equation*}
    \varrho
    \coloneqq
    \frac{1}
    {1+\alpha\sigma}
    \in(0,1).
\end{equation*}
Then
\begin{equation}
\label{eq:main-linear-Ak}
    B_k
    \leq
    \varrho^k B_0
    \qquad
    \forall k\geq0.
\end{equation}
Moreover,
\begin{equation}
\label{eq:main-linear-energy}
    E(u^k)-E(u^*)
    =
    \mathcal O\left(\varrho^k\right),
\end{equation}
and
\begin{equation}
\label{eq:main-linear-H1}
    \|u^k-u^*\|_{H^1(\Omega)}
    =
    \mathcal O\left(\varrho^{k/2}\right).
\end{equation}
\end{theorem}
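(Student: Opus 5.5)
The plan is to obtain \Cref{thm:main-linear-convergence} by specializing the abstract machinery to the order $s=1$ furnished by \Cref{prop:main-local-bregman-coercivity}. Since the hypotheses of that proposition hold, \eqref{eq:main-linear-energy-bregman} says exactly that $s=1$ is an admissible order of minimality with constant $\sigma>0$ independent of $k$. We may therefore invoke the $s=1$ branch of \Cref{thm:energy-bregman-convergence} with $\alpha_k\equiv\alpha$. Concretely, \Cref{lem:one-step-bregman-descent} combined with \eqref{eq:main-linear-energy-bregman} gives $B_{k-1}-B_k\geq\alpha\sigma B_k$, that is, $B_k\leq(1+\alpha\sigma)^{-1}B_{k-1}=\varrho B_{k-1}$. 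Iterating this bound yields \eqref{eq:main-linear-Ak}; the case $k=0$ is trivial. We also record that $B_0<\infty$: since $u^0\in K^\circ$ and $u^*-\phi\in[0,M_1]$, and $r$ extends continuously to $[0,M_1]$ while $r'(u^0-\phi)$ is bounded, the integrand $D_R(u^*,u^0)$ is bounded.

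For the energy gap we use \Cref{cor:energy-gap-H1-error-convergence} with $m=k-1$. This gives $E(u^k)-E(u^*)\leq B_{k-1}/\alpha\leq \varrho^{k-1}B_0/\alpha=(1+\alpha\sigma)\varrho^kB_0/\alpha$, which is \eqref{eq:main-linear-energy} for $k\geq1$. The case $k=0$ only affects the implied constant. Alternatively, one could apply \Cref{lem:one-step-bregman-descent} directly: $\alpha(E(u^k)-E(u^*))\leq B_{k-1}$.

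For the $H^1$ error we apply the second inequality of \Cref{cor:energy-gap-H1-error-convergence}, $\|u^k-u^*\|_{H^1}^2\leq 2\Lambda(C_P^2+1)(E(u^k)-E(u^*))$. Taking square roots gives \eqref{eq:main-linear-H1} with rate $\varrho^{k/2}$. If one preferred to derive this inequality directly, it follows from the variational inequality \eqref{eq:obstacle-vi} with $v=u^k\in K$ (so $a(u^*,u^k-u^*)\geq0$ when $F=0$). That gives $E(u^k)-E(u^*)\geq\frac12 a(u^k-u^*,u^k-u^*)$, and ellipticity together with Poincaré's inequality on $H_0^1$ finishes the bound.

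There is essentially no obstacle here. All the substantive work lies in \Cref{prop:main-local-bregman-coercivity}, which is assumed. The only points requiring care are checking $\varrho\in(0,1)$, which is immediate from $\alpha\sigma>0$, and confirming that $B_0$ is finite so that the bounds are meaningful.
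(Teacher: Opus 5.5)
Your proposal is correct and follows essentially the same route as the paper. The paper also takes $s=1$ from \Cref{prop:main-local-bregman-coercivity}, obtains $B_k\leq\varrho^kB_0$ from the $s=1$ branch of \Cref{thm:energy-bregman-convergence}, and then derives the energy and $H^1$ rates from \Cref{cor:energy-gap-H1-error-convergence} with $m=k-1$. Your added check that $B_0<\infty$ is a harmless extra detail that the paper leaves implicit.
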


The linear convergence \eqref{eq:main-linear-H1} should be interpreted as a structural improvement over
the generic algebraic rate \eqref{eq:main-H1-rate}. In the absence of strict complementarity, biactive
behavior may slow the convergence and obstruct minimality order
$s = 1$ required to establish a linear convergence rate; we refer interested
readers to \cite[Definition 12.5]{nocedal2006numerical} and
\cite{keith2024proximal}. Strict complementarity excludes this degeneracy.

\section{Proofs of well-posedness and a priori estimates}
\label{sec:wellposedness-regularity}

\subsection{Proof of existence and uniqueness}
The proof of \Cref{lem:rn-prime-un-l-infty} is based on Stampacchia's truncation method \cite[Theorem 8.19]{brezis2011functional}. Before we state its proof, we require the following result, which extends \cite[Proposition A.7]{keith2024proximal}.

\begin{lemma}
    \label{lem:preimage-l-infty}
    Let $r : \R \to \ER$ be a Legendre function satisfying $\dom r = [0,\infty)$ and $\lim_{a \to \infty} r(a)/a \geq 0$. Define the preimage
    \begin{equation*}
        (r')^{-1}(L^\infty(\Omega))
        = \{w \in L^\infty(\Omega) \mid r' \circ w \in L^\infty(\Omega)\}.
    \end{equation*}
    Then
    \begin{align*}
        (r')^{-1}(L^\infty(\Omega))
        &= (r^*)'(L^\infty(\Omega)) \\
        &= \{w \in L^\infty(\Omega) \mid 1/w \in L^\infty(\Omega) \text{ and } w > 0\} \\
        &= \{w \in L^\infty(\Omega) \mid \textstyle \essinf_\Omega w > 0\}.
    \end{align*}
\end{lemma}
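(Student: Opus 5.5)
The plan is to prove the three equalities as a cycle of inclusions, using only elementary properties of the scalar Legendre function $r$. By \Cref{lem:weak-superlineraity}, the hypothesis $\lim_{a\to\infty} r(a)/a\geq 0$ gives $(-\infty,0)\subset\ran r'$. Since $r'$ is a continuous, strictly increasing bijection from $(0,\infty)$ onto the open interval $\intr(\dom r^*)$, that interval has the form $(-\infty,\beta)$ with $\beta\in(0,\infty]$. Essential steepness at the boundary point $0$ gives $r'(a)\to-\infty$ as $a\to 0^+$. The inverse $(r^*)'=(r')^{-1}$ from \eqref{eq:legendre-duality} maps $(-\infty,\beta)$ onto $(0,\infty)$ increasingly, with $(r^*)'(\psi)\to 0$ as $\psi\to-\infty$ and $(r^*)'(\psi)\to\infty$ as $\psi\to\beta^-$.

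First I show that the last two sets coincide. This is immediate: for $w\in L^\infty(\Omega)$ with $w>0$, we have $1/w\in L^\infty$ if and only if $\essinf w>0$.

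Next I show that $(r')^{-1}(L^\infty)$ is contained in the last set. Take $w\in L^\infty$ with $r'\circ w\in L^\infty$; implicitly $w>0$ a.e., so that $r'\circ w$ is defined. If $\essinf w=0$, then there are sets of positive measure on which $w<\epsilon$ for every $\epsilon>0$. On those sets $r'(w)\leq r'(\epsilon)\to-\infty$ by monotonicity, which contradicts boundedness of $r'\circ w$.

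Conversely, suppose $0<c\leq w\leq C$ a.e. Then $r'(c)\leq r'(w)\leq r'(C)$ by monotonicity, so $r'\circ w\in L^\infty$. This proves equality of the first set with the last.

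Finally I identify $(r^*)'(L^\infty)$. I interpret $(r^*)'(L^\infty(\Omega))$ as the set of $(r^*)'\circ\psi$ for $\psi\in L^\infty$ whose values lie in the domain where $(r^*)'$ is defined, i.e.\ $\psi<\beta$; this is the subtle point when $\beta<\infty$, as for the Tsallis entropy. Given $w$ in the first set, put $\psi=r'\circ w\in L^\infty$. Then $w=(r^*)'\circ\psi$, which gives one inclusion.

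Conversely, let $\psi\in L^\infty$ with $\psi<\beta$ and set $w=(r^*)'\circ\psi$. Then $r'\circ w=\psi\in L^\infty$. It remains to check $w\in L^\infty$, and this is the main obstacle. If $\beta=\infty$, then $w\leq(r^*)'(\|\psi\|_\infty)$ and we are done. If $\beta<\infty$ and $\esssup\psi=\beta$, then $w$ can be unbounded. So the statement must be read with the range constraint $\esssup\psi<\beta$ (equivalently, $\psi$ essentially valued in a compact subset of $\intr\dom r^*$), or with $L^\infty$ targets automatically meaning this. With that reading, $w\leq(r^*)'(\esssup\psi)<\infty$, and $w\geq(r^*)'(-\|\psi\|_\infty)>0$.

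The lower bound $w\geq(r^*)'(-\|\psi\|_\infty)>0$ uses $(-\infty,0)\subset\ran r'$. Together with $\beta>0$, this means every bounded $\psi$ lies within the domain of $(r^*)'$ on the negative side, which is exactly where the weak superlinearity hypothesis enters. I would state this interpretation explicitly at the start of the proof.
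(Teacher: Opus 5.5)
Your proof is correct and is essentially the paper's cycle of inclusions driven by monotonicity of $r'$; the main difference is that you obtain $\essinf_\Omega w>0$ from steepness at $0$ by contradiction, whereas the paper inverts a lower bound $-L$ using $(-\infty,0)\subset\ran r'$, which amounts to the same thing. Your caveat on $(r^*)'(L^\infty(\Omega))$ is a genuine point that the paper passes over when it calls that equality immediate from \eqref{eq:legendre-duality}: for the Tsallis entropy, a bounded $\psi<1/(q-1)$ with $\esssup\psi=1/(q-1)$ gives an unbounded $(r^*)'\circ\psi$, so reading the set with $\esssup\psi<\beta$ is the right fix. One small correction: the hypothesis only yields $\beta\geq 0$, not $\beta>0$ (e.g.\ $r(a)=-2\sqrt{a}$ has $\ran r'=(-\infty,0)$), but this does not affect your argument.
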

\begin{proof}
    The equality $(r')^{-1}(L^\infty(\Omega)) = (r^*)'(L^\infty(\Omega))$ is immediate by \eqref{eq:legendre-duality}. Thus, we will show
    \begin{align*}
        (r')^{-1}(L^\infty(\Omega))
        &\subseteq \{w \in L^\infty(\Omega) \mid 1/w \in L^\infty(\Omega) \text{ and } w > 0\} \\
        &\subseteq \{w \in L^\infty(\Omega) \mid \textstyle \essinf_\Omega w > 0\} \\
        &\subseteq (r')^{-1}(L^\infty(\Omega)).
    \end{align*}

    First, take $w \in (r')^{-1}(L^\infty(\Omega))$. Then $r'(w) \in L^\infty(\Omega)$, so $-L \leq r'(w)$ a.e.\ in $\Omega$ for some $L > 0$. Moreover, \Cref{lem:weak-superlineraity} asserts that $(-\infty,0) \subset \ran r'$, which in turn implies that $(r')^{-1}(-L)$ is well-defined. As $\dom r = [0,\infty)$, it follows that $\ran (r')^{-1} = \intr(\dom r) = (0,\infty)$, so $(r')^{-1}(-L) > 0$. Hence $w > 0$ and
    \begin{equation*}
        |1/w|
        = 1/w
        \leq 1/(r')^{-1}(-L) \quad \text{a.e.\ in } \Omega,
    \end{equation*}
    and $1/w \in L^\infty(\Omega)$.

    Now take $w \in L^\infty(\Omega)$ with $1/w \in L^\infty(\Omega)$ and $w > 0$. Then $1/w \leq L$ a.e.\ in $\Omega$ for some $L > 0$. Equivalently, $w \geq 1/L$ a.e.\ in $\Omega$, so $\essinf_\Omega w \geq 1/L > 0$.

    Finally, take $w \in L^\infty(\Omega)$ with $\essinf_\Omega w > 0$. Then $w(x) \in (0,\infty) = \intr(\dom r)$ for a.e.\ $x \in \Omega$, so $r'(w)$ is well-defined almost everywhere. Now define $m = \essinf_\Omega w$ and $M = \|w\|_{L^\infty(\Omega)}$. Then $r'(m)$ is well-defined and
    \begin{equation*}
        m \leq w \leq M
        \iff r'(m) \leq r'(w) \leq r'(M)
        \quad \text{a.e.\ in } \Omega,
    \end{equation*}
    from which it follows that $w \in (r')^{-1}(L^\infty(\Omega))$.
\end{proof}

We are now ready to prove \Cref{lem:rn-prime-un-l-infty}.

\begin{proof}[Proof of \Cref{lem:rn-prime-un-l-infty}]
    First, recall \(u^{k-1} \in K^\circ\). Consequently, we can choose \(N\)
    large enough so that \(R_N'(u^{k-1}) = R'(u^{k-1})\) a.e.\ in \(\Omega\).
    From \Cref{lem:preimage-l-infty}, we know that $R'(u^{k-1}) \in
    L^\infty(\Omega)$. Now, define
    \begin{equation*}
        w_N = u_N - \phi,
        \quad
        \widetilde{f}_k = \alpha_k f + R'(u^{k-1}) + \alpha_k \dv(A \nabla \phi),
        \quad
        \widetilde{g} = g - \phi.
    \end{equation*}
    We have \(w_N \in H^1(\Omega)\) with \(\trace w_N = \widetilde{g}\) on
    \(\partial \Omega\). By the structure of \(R_N\), we have the identity
    \begin{equation*}
        R_N'(u_N)
        = r_N'(w_N)
    \end{equation*}
    almost everywhere on \(\Omega\). Moreover, \eqref{eq:entropic-pde-reg-ve}
    reads
    \begin{equation*}
        \alpha_k (A \nabla u_N, \nabla v) + (R_N'(u_N), v)
        = \alpha_k (f, v) + (R'(u^{k-1}),v)
        \quad \text{for all } v \in H^1_0(\Omega).
    \end{equation*}
    Integration by parts gives \((A \nabla \phi, \nabla v) = -(\dv(A \nabla
    \phi), v)\) for \(v \in H^1_0(\Omega)\). Using the decomposition \(\nabla
    u_N = \nabla w_N + \nabla \phi\), we obtain
    \begin{equation}
        \label{eq:shifted-pde}
        \alpha_k (A \nabla w_N, \nabla v) + (r_N'(w_N), v) = (\widetilde{f}_k, v)
        \quad \text{for all } v \in H_0^1(\Omega).
    \end{equation}
    Thus, \(R_N'(u_N) \in L^\infty(\Omega)\) if and only if
    \(r_N'(w_N) \in L^\infty(\Omega)\).

    Now, take any \(H \in C^1(\R)\) such that
    \begin{equation*}
        H(s) = 0 \text{ for all } s \leq 0,
        \quad H(s) > 0 \text{ for all } s > 0,
        \quad 0 \leq H'(s) \leq M \text{ for all } s \in \R
    \end{equation*}
    for some \(M > 0\). Let \(t = \pm 1\), and define \(\widetilde{g}_t\) by
    \(\widetilde{g}_1 = \esssup_{\partial \Omega} \widetilde{g}\) and
    \(\widetilde{g}_{-1} = \essinf_{\partial \Omega} \widetilde{g}\). Define
    \(L_t = \max(t r_N'(\widetilde{g}_t),
    \|\widetilde{f}_k\|_{L^\infty(\Omega)})\) and \(v_t \colon \Omega \to
    \R_{\geq 0}\) by \(v_t = H(t r_N'(w_N) - L_t)\). Through the Lipschitz
    continuity of $H, r_N'$ and repeated applications of \cite[Corollary
    A.6]{kinderlehrer2000introduction}, we know that \(v_t \in H^1(\Omega)\).
    Furthermore, as \(r_N'\) is monotone for all \(N\), and
    \(\widetilde{g}_{-1} \leq \widetilde{g} \leq \widetilde{g}_1\) almost
    everywhere on \(\partial \Omega\), we have
    \begin{equation*}
        t r_N'(\widetilde{g}) - L_t
        \leq t r_N'(\widetilde{g}_t) - L_t
        \leq 0
    \end{equation*}
    almost everywhere on \(\partial \Omega\), which implies
    \begin{equation*}
        \trace v_t
        = H(t r_N'(\trace w_N) - L_t)
        = H(t r_N'(\widetilde{g}) - L_t)
        = 0
        \quad \text{a.e.\ on } \partial \Omega.
    \end{equation*}
    Thus \(v_t \in H_0^1(\Omega)\). By \eqref{eq:shifted-pde}, it follows that
    \begin{equation*}
        \alpha_k (A \nabla w_N, \nabla v_t) + (r_N'(w_N), v_t)
        = (\widetilde{f}_k, v_t).
    \end{equation*}
    Equivalently,
    \begin{equation*}
        t \, \alpha_k (A \nabla w_N, \nabla v_t) + (L_t - t \widetilde{f}_k, v_t)
        = (L_t - t r_N'(w_N), v_t).
    \end{equation*}
    As \(r'' \geq 0\), it follows that \(r_N'' \geq 0\). Furthermore, \(H' \geq
    0\) on \(\R\), and since \(\nabla v_t = t H'(t r_N'(w_N) - L_t)
    r_N''(w_N) \nabla w_N\), it follows that
    \begin{equation*}
        t \, \alpha_k (A \nabla w_N, \nabla v_t)
        = t^2 \, \alpha_k (A \nabla w_N \cdot \nabla w_N, H'(t r_N'(w_N) - L_t) r_N''(w_N))
        \geq 0.
    \end{equation*}
    Moreover, as \((L_t - t \widetilde{f}_k, v_t) \geq 0\), it follows that
    \begin{equation*}
        (L_t - t r_N'(w_N), H(t r_N'(w_N) - L_t))
        = (L_t - t r_N'(w_N), v_t)
        \geq 0.
    \end{equation*}
    Since \(H(s) = 0\) for all \(s \leq 0\), we have that
    \begin{equation*}
        \int_{\{t r_N'(w_N) > L_t\}} (L_t - t r_N'(w_N)) H(t r_N'(w_N) - L_t) \diff x
        \geq 0.
    \end{equation*}
    But on the set \(\{t r_N'(w_N) > L_t\}\), the integrand is strictly
    negative, so it is necessary that \(\{t r_N'(w_N) > L_t\}\) has measure
    zero. Thus \(t r_N'(w_N) \leq L_t\) almost everywhere in \(\Omega\), and
    thus $r_N'(w_N) \in L^\infty(\Omega)$.

    The claim that \(\|R_N'(u_N)\|_{L^\infty(\Omega)}\) is bounded uniformly in
    \(N\) follows if we choose \(N\) large enough so that
    \(\esssup_{\partial \Omega} (g - \phi) \leq N\) and \(\essinf_{\partial
    \Omega} (g - \phi) \geq 1/N\). In particular,
    \begin{equation*}
        N
        \geq \max\!\left(\esssup_{\partial \Omega} (g - \phi),
             1/\essinf_{\partial \Omega} (g - \phi),
             \esssup_\Omega (u^{k-1} - \phi),
             1/\essinf_\Omega (u^{k-1} - \phi)\right)
    \end{equation*}
    suffices.
\end{proof}

\begin{proof}[Proof of \Cref{thm:entropic-pde}]
    There exists a unique \(u_N \in H_g^1(\Omega)\) that solves
    \eqref{eq:entropic-pde-reg-ve} for each \(N\). Furthermore, as $u^{k-1} \in
    K^\circ$, we can choose $N$ large enough so that $R_N'(u^{k-1}) =
    R'(u^{k-1})$ a.e.\ in $\Omega$. Defining
    \begin{equation*}
        w_N = u_N - \phi,
        \quad
        \widetilde{f}_k = \alpha_k f + R'(u^{k-1}) + \alpha_k \dv(A \nabla \phi)
    \end{equation*}
    and applying \eqref{eq:entropic-pde-reg-ve}, $w_N$ satisfies
    \begin{equation}
        \label{eq:shifted-pde-2}
        \alpha_k (A \nabla w_N, \nabla v) + (r_N'(w_N), v)
        = (\widetilde{f}_k,v)
        \quad \text{for all } v \in H_0^1(\Omega).
    \end{equation}

    Through \Cref{lem:rn-prime-un-l-infty}, we know that
    $\|r_N'(w_N)\|_{L^\infty(\Omega)} \leq L_k$ for some $L_k > 0$ independent
    of $N$. Consequently, $r_N'(w_N) \geq -L_k$ a.e.\ in $\Omega$, and as $-L_k \in
    \ran r'$, the quantity $m_k \coloneqq (r')^{-1}(-L_k)$ is well-defined and
    positive. Hence, choose $N$ sufficiently large so that $1/N \leq m_k \leq
    N$. It follows that
    \begin{equation*}
        r_N'(w_N)
        \geq -L_k
        = r'(m_k)
        = r_N'(m_k)
        \geq r_N'(1/N)
        \quad \text{a.e.\ in } \Omega,
    \end{equation*}
    so
    \begin{equation*}
        w_N \geq 1/N
        \quad \text{a.e.\ in } \Omega.
    \end{equation*}

    Define $C_k = \alpha_k^{-1} (\|\widetilde{f}_k\|_{L^\infty(\Omega)} + L_k)
    > 0$. Through \eqref{eq:shifted-pde-2}, we have for every $v \in H_0^1(\Omega)$,
    \begin{equation*}
        (A \nabla w_N, \nabla v)
        = \alpha_k^{-1} (\widetilde{f}_k - r_N'(w_N), v)
        \leq \alpha_k^{-1} (\|\widetilde{f}_k\|_{L^\infty(\Omega)} + L_k) \|v\|_{L^1(\Omega)},
    \end{equation*}
    so
    \begin{equation}
        \label{eq:shifted-pde-bound}
        (A \nabla w_N, \nabla v)
        \leq C_k \|v\|_{L^1(\Omega)}
        \quad \text{for all } v \in H_0^1(\Omega).
    \end{equation}
    Now, let $\psi \in H_0^1(\Omega)$ be the weak solution of
    \begin{equation}
        \label{eq:simple-poisson-pde}
        (A \nabla \psi, \nabla v)
        = (1,v)
        \quad \text{for all } v \in H_0^1(\Omega).
    \end{equation}
                    Through standard elliptic PDE theory (e.g., \cite[Theorem
    2.6]{bensoussan2013regularity}), we know that $\psi \in L^\infty(\Omega)$.
    Hence, define
    \begin{equation*}
        G = \esssup_{\partial \Omega} (g - \phi),
        \quad
        z_N
        = w_N - G - C_k \psi.
    \end{equation*}
    Note that $z_N \in H^1(\Omega)$ and
    \begin{equation*}
        \trace z_N
        = \trace w_N - G - C_k \trace \psi
        = g - \phi - G
        \leq 0
        \quad \text{a.e.\ on } \partial \Omega.
    \end{equation*}
    It follows that $(z_N)_+ \coloneqq \max(z_N,0) \in H_0^1(\Omega)$. Hence,
    applying \eqref{eq:shifted-pde-bound} and \eqref{eq:simple-poisson-pde}
    with $v = (z_N)_+$, we conclude
    \begin{align*}
        (A \nabla (z_N)_+, \nabla (z_N)_+)
        &= (A \nabla z_N, \nabla (z_N)_+) \\
        &= (A \nabla w_N, \nabla (z_N)_+) - C_k (A \nabla \psi, \nabla (z_N)_+) \\
        &= (A \nabla w_N, \nabla (z_N)_+) - C_k (1, (z_N)_+) \\
        &\leq 0.
    \end{align*}
    In conjunction with the uniform ellipticity of $A$,
    \begin{equation*}
    \frac{1}{\Lambda} \|\nabla (z_N)_+\|^2_{L^2(\Omega)}
        \leq (A \nabla (z_N)_+, \nabla (z_N)_+)
        \leq 0.
    \end{equation*}
    Thus $(z_N)_+ = 0$ a.e.\ in $\Omega$, or equivalently, $w_N \leq G + C_k
    \psi$ a.e.\ in $\Omega$. We now choose $N$ sufficiently large so that $G +
    C_k \|\psi\|_{L^\infty(\Omega)} \leq N$, demonstrating that
    \begin{equation*}
        1/N \leq w_N \leq N
        \quad \text{a.e.\ in } \Omega.
    \end{equation*}
    Hence \(R_N'(u_N) = R'(u_N)\) a.e.\ in \(\Omega\), and \(u_N \in K^\circ\)
    solves \eqref{eq:general-bpp-weak-form-A}.

    Lastly, define \(J\colon H_g^1(\Omega)\to \ER\) by
    \[
    J(v)
    \coloneqq
    E(v)
    +
    \alpha_k^{-1}
    \int_\Omega D_R(v,u^{k-1})\diff x.
    \]
    Let \(v\in K\). As \(v - u^k \in H_0^1(\Omega)\), we know that
    \begin{equation*}
        \langle E'(u^k),v - u^k\rangle + \alpha_k^{-1} (R'(u^k),v - u^k)
        = \alpha_k^{-1} (R'(u^{k-1}),v - u^k).
    \end{equation*}
    It follows that
    \begin{equation*}
        \alpha_k^{-1} \int_{\Omega} D_R(v,u^{k-1}) - D_R(u^k,u^{k-1}) \diff x
                        = -\langle E'(u^k), v - u^k\rangle + \alpha_k^{-1} \int_{\Omega} D_R(v,u^k) \diff x
    \end{equation*}
    and thus by the subgradient inequality and non-negativity of $D_R$,
    \begin{align*}
        J(v) - J(u^k)
        &= E(v) - E(u^k) + \alpha_k^{-1} \int_{\Omega} D_R(v,u^{k-1}) - D_R(u^k,u^{k-1}) \diff x \\
        &= E(v) - E(u^k) - \langle E'(u^k), v - u^k\rangle + \alpha_k^{-1} \int_{\Omega} D_R(v,u^k) \diff x \\
        &\geq 0.
    \end{align*}
    Uniqueness of $u^k$ follows from the fact that $J$ is strictly convex on $K$.
\end{proof}

\begin{remark}
    We emphasize that \Cref{thm:entropic-pde} holds even if $r$ is not
    superlinear, as in the case of the Tsallis entropy. If $r$ is superlinear,
    our proof of \Cref{thm:entropic-pde} may be simplified to avoid the use of
    the auxiliary function $\psi$.
\end{remark}

\subsection{A priori properties of the iterates}

We now prove the four a priori estimates stated in
\Cref{subsec:apriori-properties}, i.e., \Cref{prop:main-positive-invariance}, \Cref{lem:uniform-bound-above}, \Cref{lem:energy-dissipation} and \Cref{lem:uniform-H1-bound}, under the homogeneous setting
\eqref{eq:homogeneous-setting}. Together, these estimates provide the
structural control needed to estimate the order of minimality in
the subsequent analysis.

\subsubsection{Proof of \Cref{prop:main-positive-invariance}}

\begin{proof}
We argue by induction on $k$. The base case $k=0$ is the assumption. Suppose
that
\begin{equation*}
    u^{k-1}\geq u^*
    \qquad \text{a.e. in }\Omega.
\end{equation*}
We prove that $u^k\geq u^*$ a.e. in $\Omega$.

\medskip
\noindent\textit{Step 1: Define the test function.}
For each $n \in \mathbb{N}^+$ define
\begin{equation*}
    v_n \coloneqq  \left(u^* - u^k - \frac{1}{n}\right)_+ \in H^1(\Omega),
\end{equation*}
and let
\begin{equation*}
    \Sigma_n \coloneqq \left\{v_n > 0\right\}
    = \left\{u^k < u^* - \frac{1}{n} \right\},
\end{equation*}
so that $v_n>0$ precisely on $\Sigma_n$. We will show that $u^k\geq u^*-
\frac{1}{n}$ a.e. in $\Omega$ for every $n$, and then send $n\to\infty$.

\medskip
\noindent\textit{Step 2: The test function is admissible.}
We claim $v_n\in H_0^1(\Omega)$. The positive truncation preserves $H^1$, so
    $v_n\in H^1(\Omega)$ \cite[Theorem A.1]{kinderlehrer2000introduction}. Since $u^*,u^k\in H^1_0(\Omega)$ share the zero trace
on $\partial\Omega$ and the trace commutes with the positive part,
\begin{equation}
\label{eq:v-n-zero-trace}
    \trace v_n
    = \Bigl(\trace(u^*-u^k) - \tfrac{1}{n}\Bigr)_+
    = \bigl(-\tfrac{1}{n}\bigr)_+ = 0 .
\end{equation}
Hence $v_n\in H_0^1(\Omega)$. Note that $v_n\geq0$.

\medskip
\noindent\textit{Step 3: Variational inequality for $u^*$.}
Recall that $u^*$ solves the obstacle variational inequality
\eqref{eq:abstract-obstacle-vi},
\begin{equation*}
    a(u^*,v-u^*)\geq 0 \qquad\forall v\in K,
\end{equation*}
with $K=\{v\in H^1_0(\Omega):\ v\geq\phi\ \text{a.e.}\}$.

Both $u^*\pm v_n$ are admissible: they lie in $H^1_0(\Omega)$ by
\eqref{eq:v-n-zero-trace}, and
\begin{equation*}
    u^*+v_n\geq u^*\geq\phi,
    \qquad
    u^*-v_n=
      \begin{cases}
        u^*\geq\phi, & \text{on }\{v_n=0\},\\[2pt]
        u^k+\frac{1}{n}\geq\phi+\frac{1}{n}\geq\phi, & \text{on }\{v_n>0\},
      \end{cases}
\end{equation*}
where the second line uses the feasibility $u^k\geq\phi$. Testing the
variational inequality with $v=u^*+v_n$ and with $v=u^*-v_n$ yields
$a(u^*,v_n)\geq 0$ and $a(u^*,v_n)\leq 0$, respectively. Hence
\begin{equation*}
    a(u^*,v_n)=0.
\end{equation*}
Since $\nabla v_n=0$ a.e. outside $\Sigma_n$, we may restrict the integral
to $\Sigma_n$,
\begin{equation}
\label{eq:positive-invariance-eq-ustar}
    \int_{\Sigma_n} A\,\nabla u^*\cdot\nabla v_n \diff x = 0.
\end{equation}

\medskip
\noindent\textit{Step 4: Subtraction.}
By \eqref{eq:general-bpp-weak-form-A}, the $k$-th proximal step reads
\begin{equation}
\label{eq:positive-invariance-uk-eq}
    a(u^k,v)
    +\frac{1}{\alpha_k}\bigl(R'(u^k)-R'(u^{k-1}),v\bigr)
    =0
    \qquad \forall v\in H_0^1(\Omega),
\end{equation}
which applies in particular for $v=v_n\in H_0^1(\Omega)$. Testing
\eqref{eq:positive-invariance-uk-eq} with $v=v_n$, and using that $v_n$ and
$\nabla v_n$ vanish a.e. outside $\Sigma_n$ to restrict every integral to
$\Sigma_n$, gives
\begin{equation}
\label{eq:positive-invariance-u-k}
    \int_{\Sigma_n}A\,\nabla u^k\cdot\nabla v_n \diff x
    +\frac{1}{\alpha_k}\int_{\Sigma_n}R'(u^k)\,v_n \diff x
    =\frac{1}{\alpha_k} \int_{\Sigma_n}R'(u^{k-1})\,v_n \diff x.
\end{equation}
On $\Sigma_n$, feasibility gives $u^*>u^k+\frac{1}{n}\geq\phi+\frac{1}{n}$,
so $u^*-\phi>\frac{1}{n}>0$ and hence $R'(u^*)$ is well defined and bounded
here. Subtracting \eqref{eq:positive-invariance-eq-ustar} from
\eqref{eq:positive-invariance-u-k} and adding and subtracting
$\tfrac{1}{\alpha_k}\int_{\Sigma_n}R'(u^*)v_n \diff x$, we arrive at
\begin{align}
    \int_{\Sigma_n}A\,\nabla (u^k-u^*)\cdot\nabla v_n \diff x
    &+\frac{1}{\alpha_k}\int_{\Sigma_n}\bigl(R'(u^k) - R'(u^*)\bigr)v_n \diff x
    \nonumber \\
    &=\frac{1}{\alpha_k} \int_{\Sigma_n}\bigl(R'(u^{k-1}) - R'(u^*)\bigr)v_n \diff x.
    \label{eq:positive-invariance-diff}
\end{align}

\medskip
\noindent\textit{Step 5: Sign of each term.}
On $\Sigma_n$,
\begin{equation*}
    v_n=u^*-\frac{1}{n}-u^k,
    \qquad
    \nabla v_n=\nabla u^*-\nabla u^k
    \qquad\text{a.e.}
\end{equation*}
Using the uniform ellipticity \eqref{eq:uniform-ellipticity} of $A$, and
\begin{equation*}
    \nabla(u^k-u^*)=-\nabla v_n \quad \text{a.e.\ on} \; \Sigma_n,
\end{equation*}
the bilinear term becomes
\begin{align}
    \int_{\Sigma_n} A\,\nabla(u^k-u^*)\cdot\nabla v_n \diff x \nonumber
    & = -\int_{\Sigma_n} A\,\nabla v_n\cdot\nabla v_n \diff x \\
    & = -\int_{\Omega} A\,\nabla v_n\cdot\nabla v_n \diff x
     \leq -\frac{1}{\Lambda}\|\nabla v_n\|_{L^2(\Omega)}^2.
     \label{eq:bilinear-form}
\end{align}

For the first Bregman term, on $\Sigma_n$ we have $\phi\leq
u^k<u^*-\frac{1}{n}<u^*$. Since $R'$ is nondecreasing,
\begin{equation*}
    R'(u^k)\leq R'(u^*)
    \qquad \text{a.e. on }\Sigma_n,
\end{equation*}
and since $v_n\geq0$,
\begin{equation}
\label{eq:first-bregman-term}
    \int_{\Sigma_n}\bigl(R'(u^k)-R'(u^*)\bigr)v_n \diff x \leq 0.
\end{equation}

For the second Bregman term, by the induction hypothesis $u^{k-1}\geq u^*$
a.e. and monotonicity of $R'$,
\begin{equation*}
    R'(u^{k-1})\geq R'(u^*) \qquad \text{a.e. on }\Sigma_n,
\end{equation*}
and since $v_n\geq0$,
\begin{equation}
\label{eq:second-bregman-term}
    \int_{\Sigma_n}\bigl(R'(u^{k-1}) - R'(u^*)\bigr)v_n \diff x\geq 0.
\end{equation}

\medskip
\noindent\textit{Step 6: Combining.}
Substituting \eqref{eq:bilinear-form}, \eqref{eq:first-bregman-term} and
\eqref{eq:second-bregman-term} into \eqref{eq:positive-invariance-diff} yields
\begin{align}
       0 &\leq \frac{1}{\alpha_k}\int_{\Sigma_n}\bigl(R'(u^{k-1}) - R'(u^*)\bigr)v_n \diff x  \nonumber \\
       &= \int_{\Sigma_n}A\,\nabla (u^k-u^*)\cdot\nabla v_n \diff x
    +\frac{1}{\alpha_k}\int_{\Sigma_n}\bigl(R'(u^k) - R'(u^*)\bigr)v_n \diff x \nonumber \\
       &\leq -\frac{1}{\Lambda}\|\nabla v_n\|_{L^2(\Omega)}^2 \leq 0.
\end{align}
Hence $\|\nabla v_n\|_{L^2(\Omega)}=0$. Since $v_n\in H_0^1(\Omega)$, the
Poincar\'e inequality yields $v_n=0$ a.e., i.e.,
\begin{equation*}
    u^k\geq u^*-\frac{1}{n}\qquad \text{a.e. in }\Omega.
\end{equation*}
As this holds for every $n$, letting $n\to\infty$ gives $u^k\geq u^*$ a.e.
in $\Omega$. The induction is complete.
\end{proof}

\subsubsection{Proof of \Cref{lem:uniform-bound-above}}

\begin{proof}
By the definition of $\overline M_1$, we have
\begin{equation*}
    \overline M_1-\phi\geq 1 \quad \text{and} \quad u^0\leq \overline M_1,
    \qquad\text{a.e.\ in }\Omega.
\end{equation*} We prove by induction that
$u^k\leq \overline M_1$ a.e.\ for every $k\geq0$.

The case $k=0$ follows from the definition of $\overline M_1$. Suppose that $u^{k-1}\leq
\overline M_1$ a.e.\ in $\Omega$. We show that $u^k\leq \overline M_1$. Let
\begin{equation*}
    z \coloneqq  (u^k- \overline M_1)_+ .
\end{equation*}
Since $u^k\in H_0^1(\Omega)$, the trace of $u^k- \overline M_1$ is nonpositive on
$\partial\Omega$. Hence $z\in H_0^1(\Omega)$. Moreover $z\geq0$.

Testing \eqref{eq:general-bpp-weak-form-A} with $v=z$, and using $F=0$, gives
\begin{equation}
    \alpha_k a(u^k,z)
    +
    (R'(u^k)-R'(u^{k-1}),z)
    =
    0.
\end{equation}
Since $\overline M_1$ is constant, $a(\overline M_1,z)=0$. Therefore
\begin{equation}
\label{eq:M1-bar-weak}
    \alpha_k a(u^k-\overline M_1,z)
    =
    -
    (R'(u^k)-R'(u^{k-1}),z).
\end{equation}
On the set $\{z>0\}=\{u^k > \overline M_1\}$, the induction hypothesis implies
\begin{equation}
    u^k-\phi
    >
    \overline M_1-\phi
    \geq
    u^{k-1}-\phi .
\end{equation}
Since $R'(u)=r'(u-\phi)$ and $r'$ is nondecreasing, we have
\begin{equation*}
    R'(u^k)-R'(u^{k-1})\geq0
    \qquad\text{a.e. on }\{z>0\}.
\end{equation*}
As $z=0$ outside $\{z>0\}$, it follows that
\begin{equation}
    (R'(u^k)-R'(u^{k-1}),z)\geq0 .
\end{equation}

It remains to identify the left-hand side of \eqref{eq:M1-bar-weak}. By the truncation chain rule,
\begin{equation}
    \nabla z
    =
    \nabla(u^k- \overline M_1)\,\mathbf 1_{\{u^k> \overline M_1\}}.
\end{equation}
Thus, we obtain
\begin{equation}
    a(u^k-\overline M_1,z)
    =
    a(z,z).
\end{equation}
Using the uniform ellipticity of $A$, we get
\begin{equation}
    \frac{\alpha_k}{\Lambda}\|\nabla z\|_{L^2(\Omega)}^2
    \leq
    \alpha_k a(z,z)
    =
    \alpha_k a(u^k-\overline M_1,z)
    \leq0 .
\end{equation}
Therefore $\nabla z=0$ a.e. in $\Omega$. Since $z\in H_0^1(\Omega)$, it implies
$z=0$ a.e. in $\Omega$. Thus $u^k\leq \overline M_1$ a.e. in $\Omega$.

By induction, $u^k\leq \overline M_1$ a.e. for all $k\geq0$. Therefore
\begin{equation*}
     u^k-\phi
    \leq
    \overline M_1-\essinf_{\Omega}\phi
    \qquad\text{a.e. in }\Omega .
\end{equation*}
This gives the claimed uniform bound \eqref{eq:M1-unfiorm-bound}.
\end{proof}

\subsubsection{Proof of \Cref{lem:energy-dissipation}}

A similar energy dissipation has previously been established in the
infinite-dimensional setting for the Dirichlet energy and the Shannon entropy
in \cite[Theorem 4.13]{keith2024proximal}, with a related finite-dimensional
result given in \cite[Lemma~3.3]{chen1993convergence}. Here we give a different
proof directly from the weak formulation, using only the monotonicity of $R'$.
The argument therefore extends naturally to the more general quadratic energies
and Legendre functions considered in this work.

\begin{proof}
Since $u^k$ and $u^{k+1}$ satisfy the same Dirichlet boundary condition, we
have
\begin{equation*}
    u^k-u^{k+1}\in H_0^1(\Omega).
\end{equation*}
Thus $u^k - u^{k+1}$ is an admissible test function in the $(k+1)$-th step
of \eqref{eq:general-bpp-weak-form-A}. Taking $v=u^k - u^{k+1}$ in
\eqref{eq:general-bpp-weak-form-A}, we obtain
\begin{equation}
    \alpha_{k+1} \left\langle E'(u^{k+1}), u^k-u^{k+1}\right\rangle
    =
    \left( R'(u^{k+1})-R'(u^k),
    u^{k+1}-u^{k}\right).
\end{equation}
Since $R'$ is monotone nondecreasing, we obtain
\begin{equation}
    \left( R'(u^{k+1})-R'(u^k),
    u^{k+1}-u^k\right) \geq 0,
\end{equation}
and therefore
\begin{equation}
    \left\langle E'(u^{k+1}), u^k-u^{k+1}\right\rangle \geq 0.
\end{equation}
On the other hand, by the convexity of $E$, we have
\begin{equation}
    E(u^k)-E(u^{k+1})
    \geq
    \left\langle E'(u^{k+1}), u^k-u^{k+1}\right\rangle \geq 0.
\end{equation}
This proves the energy dissipation property \eqref{eq:energy-dissipation}.
\end{proof}

\subsubsection{Proof of \Cref{lem:uniform-H1-bound}}

\begin{proof}
By \Cref{lem:energy-dissipation},
\begin{equation}
    E(u^k)
    \leq
    E(u^0)
    \qquad
    \forall k\geq0.
\end{equation}
Since $F=0$, the uniform ellipticity \eqref{eq:uniform-ellipticity} and Poincar\'e's inequality give
\begin{equation}
    \frac{1}{2\Lambda(C_P^2+1)}
    \|u^k\|_{H^1(\Omega)}^2
    \leq
    \frac{1}{2\Lambda}
    \|\nabla u^k\|_{L^2(\Omega)}^2
    \leq
    E(u^k).
\end{equation}
On the other hand, the upper ellipticity bound yields
\begin{equation}
    E(u^0)
    \leq
    \frac{\Lambda}{2}
    \|\nabla u^0\|_{L^2(\Omega)}^2
    \leq
    \frac{\Lambda}{2}
    \|u^0\|_{H^1(\Omega)}^2.
\end{equation}
Combining the preceding estimates gives
\begin{equation}
    \|u^k\|_{H^1(\Omega)}
    \leq
    \Lambda
    \left(
        C_P^2+1
    \right)^{1/2}
    \|u^0\|_{H^1(\Omega)}
    =
    M_2.
\end{equation}
This completes the proof.
\end{proof}

\section{Proof of convergence rates}
\label{sec:convergence-rate}

We prove the convergence results in the same order in which they are stated in
\Cref{subsec:convergence-rates}. We first establish the abstract convergence
mechanism. We then verify sequential strict local minimality in the two regimes
$s \geq 2$ and $s=1$, together with the corresponding sublinear and linear
convergence rates.
Throughout this section, we continue to work under the homogeneous setting \eqref{eq:homogeneous-setting}.

\subsection{The convergence mechanism}

The proof relies on two ingredients: the one-step Bregman descent estimate
in \Cref{lem:one-step-bregman-descent} and a scalar discrete inequality
in \Cref{lem:discrete-bihari-lasalle}. Together they yield \Cref{thm:energy-bregman-convergence} for every
admissible minimality order.

\subsubsection{Proof of \Cref{lem:one-step-bregman-descent}}

The proof may be viewed as an infinite-dimensional extension of the
finite-dimensional Bregman proximal descent argument in
\cite[Lemma~3.2]{chen1993convergence}. In both settings, the proximal
optimality condition and the three-point identity yield a telescoping
difference of Bregman errors. The present argument differs from the discrete
analysis in \cite[Theorem~3.3]{keith2025priori}, where a Galerkin projection
introduces an additional consistency term. At the continuous level, $u^k$ is
directly admissible, so no such consistency term appears.

\begin{proof}
    By convexity of $E$,
\begin{equation}
\label{eq:sublinear-convexity}
    E(u^*) \geq E(u^k) + \langle E'(u^k), u^*-u^k\rangle.
\end{equation}
The weak form \eqref{eq:general-bpp-weak-form-A} of the proximal step gives
\begin{equation*}
    \alpha_k\langle E'(u^k),v\rangle
    = \bigl(R'(u^{k-1})-R'(u^k),\,v\bigr)
    \qquad \forall v\in H_0^1(\Omega).
\end{equation*}
Since $u^*-u^k\in H_0^1(\Omega)$, substitution into \eqref{eq:sublinear-convexity}
yields
\begin{equation}
\label{eq:sublinear-after-EL}
    E(u^*)
    \geq E(u^k)
       + \frac{1}{\alpha_k}\bigl(R'(u^{k-1})-R'(u^k),\,u^*-u^k\bigr).
\end{equation}
We now invoke the three-point identity \eqref{eq:three-point-identity}, applied
with $u=u^*$, $v=u^k$, $w=u^{k-1}$:
\begin{equation*}
    \bigl(R'(u^k)-R'(u^{k-1}),\,u^k-u^*\bigr)
    = \int_{\Omega} D_R(u^*,u^k)-D_R(u^*,u^{k-1})+D_R(u^k,u^{k-1}) \diff x.
\end{equation*}
Substituting into \eqref{eq:sublinear-after-EL},
\begin{equation*}
    E(u^*)-E(u^k)
    \geq \frac{1}{\alpha_k}\left(
        B_k - B_{k-1} + \int_\Omega D_R(u^k,u^{k-1}) \diff x
    \right).
\end{equation*}
Since $D_R(u^k,u^{k-1})\geq 0$, rearranging gives exactly \eqref{eq:one-step-descent}:
\begin{equation}
    B_{k-1}-B_k \;\geq\; \alpha_k\bigl(E(u^k)-E(u^*)\bigr).
\end{equation}
This completes the proof.
\end{proof}

\subsubsection{Proof of \Cref{thm:energy-bregman-convergence}}

Before proving \Cref{thm:energy-bregman-convergence}, we record the discrete Bihari--LaSalle inequality. The classical Bihari--LaSalle inequality is a generalization of Gr\"onwall's inequality; see \cite{bihari1956generalization} and \cite{lasalle1949uniqueness}. The following lemma exhibits a discrete counterpart that converts the recursion satisfied by the Bregman error into an explicit convergence rate.

\begin{lemma}[Discrete Bihari--LaSalle inequality]
\label{lem:discrete-bihari-lasalle}
Let $\{B_k\}_{k\geq0}$ be a sequence of nonnegative real numbers satisfying
\begin{equation}
\label{eq:discrete-bihari-hypothesis}
    B_{k-1}-B_k
    \geq
    \lambda_k B_k^\beta,
    \qquad
    \beta>1,
    \quad
    \lambda_k>0,
\end{equation}
for every $k\geq1$. Then $\{B_k\}$ is nonincreasing. If $B_0=0$, then $B_k=0$ for all $k\geq0$. If $B_0>0$, then
\begin{equation}
\label{eq:discrete-bihari-conclusion}
    B_k
    \leq
    \left(
        B_0^{1-\beta}
        +
        (\beta-1)
        \sum_{i=1}^k
        \frac{\lambda_i}
        {\left(1+\lambda_i B_0^{\beta-1}\right)^\beta}
    \right)^{-1/(\beta-1)}.
\end{equation}
In particular, if
$\lambda_k\equiv\lambda>0$, then
\begin{equation}
    B_k
    =
    \mathcal O\left(k^{-1/(\beta-1)}\right).
\end{equation}
\end{lemma}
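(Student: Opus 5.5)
The plan is to run the classical comparison argument for $x\mapsto x^{-(\beta-1)}$, keeping track of the nonlinear factor through a lower bound on the ratio $B_{k-1}/B_k$.

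\textbf{Monotonicity and the degenerate case.} Since $\lambda_k>0$ and $B_k\geq0$, the hypothesis \eqref{eq:discrete-bihari-hypothesis} gives $B_{k-1}\geq B_k$ at once, so $\{B_k\}$ is nonincreasing. If $B_0=0$, then $0\leq B_k\leq B_0=0$ for every $k$. If $B_0>0$ but some $B_k=0$, then all later terms vanish and \eqref{eq:discrete-bihari-conclusion} holds trivially. So it suffices to treat indices with $B_k>0$, and hence $B_{i}>0$ for all $i\leq k$.

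\textbf{Key one-step increment.} Set $x_i\coloneqq B_i^{1-\beta}$. The aim is to show
\[
x_i-x_{i-1}\geq (\beta-1)\frac{\lambda_i}{(1+\lambda_iB_0^{\beta-1})^\beta}.
\]
By the mean value theorem applied to $t\mapsto t^{1-\beta}$ on $[B_i,B_{i-1}]$,
\[
B_i^{1-\beta}-B_{i-1}^{1-\beta}=(\beta-1)\,\xi^{-\beta}(B_{i-1}-B_i)
\]
for some $\xi\in[B_i,B_{i-1}]$. This is at least $(\beta-1)B_{i-1}^{-\beta}\lambda_iB_i^\beta=(\beta-1)\lambda_i\,(B_i/B_{i-1})^\beta$. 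It remains to bound the ratio $B_i/B_{i-1}$ from below. The hypothesis gives $B_{i-1}\geq B_i(1+\lambda_iB_i^{\beta-1})$, which points the wrong way. Instead, I would use the fact that $B_i$ solves $B_i+\lambda_iB_i^\beta\leq B_{i-1}$. The upper bound this implies on $B_i$ is not directly what we need, so the argument should be reorganized as follows.

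Split into two cases. In the first case, $B_i\geq B_{i-1}/(1+\lambda_iB_0^{\beta-1})$. Then $(B_i/B_{i-1})^\beta\geq(1+\lambda_iB_0^{\beta-1})^{-\beta}$ and the increment bound follows. In the second case, $B_i<B_{i-1}/(1+\lambda_iB_0^{\beta-1})$. Then directly
\[
x_i-x_{i-1}\geq x_{i-1}\Bigl[(1+\lambda_iB_0^{\beta-1})^{\beta-1}-1\Bigr]\geq x_{i-1}(\beta-1)\lambda_iB_0^{\beta-1}(1+\lambda_iB_0^{\beta-1})^{-1}.
\]
The first inequality uses the factorization $x_i\geq x_{i-1}(1+\lambda_iB_0^{\beta-1})^{\beta-1}$. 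The second applies the mean value theorem to $y\mapsto y^{\beta-1}$ on $[1,1+\lambda_iB_0^{\beta-1}]$, bounding the derivative below by $(\beta-1)(1+\lambda_iB_0^{\beta-1})^{-1}$ when $\beta\leq 2$, and by $(\beta-1)$ when $\beta>2$. Since $x_{i-1}\geq B_0^{1-\beta}$, this is at least $(\beta-1)\lambda_i(1+\lambda_iB_0^{\beta-1})^{-1}$. That in turn dominates $(\beta-1)\lambda_i(1+\lambda_iB_0^{\beta-1})^{-\beta}$, because $\beta>1$. Checking this case split carefully, especially the elementary inequality for $\beta\leq2$ versus $\beta>2$, is the step I expect to require the most care.

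\textbf{Conclusion.} Summing the increments from $i=1$ to $k$ gives $B_k^{1-\beta}\geq B_0^{1-\beta}+(\beta-1)\sum_{i=1}^k\lambda_i(1+\lambda_iB_0^{\beta-1})^{-\beta}$. Raising both sides to the negative power $-1/(\beta-1)$ reverses the inequality and yields \eqref{eq:discrete-bihari-conclusion}. For constant $\lambda$, the sum equals $ck$ with $c=\lambda(1+\lambda B_0^{\beta-1})^{-\beta}>0$. Hence $B_k\leq(B_0^{1-\beta}+(\beta-1)ck)^{-1/(\beta-1)}=\mathcal O(k^{-1/(\beta-1)})$.
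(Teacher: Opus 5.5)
Your proof is correct, but it takes a different route from the paper's. The paper needs no case split because it factors out $B_k^{1-\beta}$ instead of estimating a mean-value point. With $x_k=\lambda_kB_k^{\beta-1}$, the hypothesis reads $B_{k-1}\geq B_k(1+x_k)$, and since $1-\beta<0$,
\[
B_k^{1-\beta}-B_{k-1}^{1-\beta}
\geq B_k^{1-\beta}\bigl[1-(1+x_k)^{1-\beta}\bigr]
\geq B_k^{1-\beta}\,\frac{(\beta-1)x_k}{(1+x_k)^{\beta}}
=\frac{(\beta-1)\lambda_k}{(1+x_k)^{\beta}}.
\]
The bound $x_k\leq\lambda_kB_0^{\beta-1}$ then finishes the step. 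So the inequality $B_{i-1}\geq B_i(1+\lambda_iB_i^{\beta-1})$ does not point the wrong way. It is exactly what is needed once the increment is written as $B_i^{1-\beta}[1-(B_{i-1}/B_i)^{1-\beta}]$. The information is lost in your route at the bound $\xi^{-\beta}\geq B_{i-1}^{-\beta}$. That bound forces you to look for a lower bound on $B_i/B_{i-1}$, which the hypothesis cannot supply, and hence the dichotomy. Both of your cases are nevertheless valid. Case 1 is immediate, and Case 2 uses only monotonicity and $B_{i-1}\leq B_0$. Your split into $\beta\leq2$ and $\beta>2$ is unnecessary: $y^{\beta-1}-1\geq(\beta-1)(1-y^{-1})$ holds for all $y\geq1$ and $\beta>1$ (substitute $y=e^t$ and compare both sides with $(\beta-1)t$). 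Your ratio dichotomy is a standard template and gives the better factor $(1+\lambda_iB_0^{\beta-1})^{-1}$ in the large-drop case. The paper's route is a one-line computation and keeps the sharper intermediate bound with $B_k$ rather than $B_0$ in the denominator. That bound shows the per-step gain in $B_k^{1-\beta}$ approaches $(\beta-1)\lambda_k$ as $B_k\to0$.
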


The proof of \Cref{lem:discrete-bihari-lasalle} is deferred to \Cref{app:discrete-bihari-lasalle}.

\begin{proof}
Combining the sequential strict local minimality inequality
\eqref{eq:energy-bregman-inequality} with the one-step Bregman descent
\eqref{eq:one-step-descent} gives
\begin{equation}
\label{eq:abstract-energy-bregman-recursion}
    B_{k-1}-B_k
    \geq
    \alpha_k\sigma B_k^s
    \qquad
    \forall k\geq1.
\end{equation}

Suppose first that $s>1$. Applying
\Cref{lem:discrete-bihari-lasalle} to
\eqref{eq:abstract-energy-bregman-recursion} with
$\beta=s$ and $\lambda_k=\alpha_k\sigma$ yields
\eqref{eq:abstract-bregman-rate}. When $\alpha_k\equiv\alpha>0$, the
constant-step conclusion of the same lemma gives
\eqref{eq:abstract-bregman-fixed-rate}.

Suppose now that $s=1$. From
\eqref{eq:abstract-energy-bregman-recursion},
\begin{equation}
    \left(1+\alpha_k\sigma\right)B_k
    \leq
    B_{k-1}.
\end{equation}
Iterating this inequality gives
\begin{equation}
    B_k
    \leq
    \prod_{i=1}^k
    \frac{1}{1+\alpha_i\sigma}
    B_0,
\end{equation}
which is \eqref{eq:abstract-bregman-linear-product}. For
$\alpha_k\equiv\alpha>0$, this reduces to
\eqref{eq:abstract-bregman-linear-rate}.
\end{proof}

\subsubsection{Proof of \Cref{cor:energy-gap-H1-error-convergence}}
\begin{proof}
By \Cref{lem:energy-dissipation}, the sequence
$\{E(u^k)-E(u^*)\}$ is nonincreasing. Summing the one-step Bregman
descent estimate \eqref{eq:one-step-descent} from \(j=m+1\) to \(j=k\)
gives
\begin{equation*}
    \sum_{j=m+1}^{k}\alpha_j (E(u^j)-E(u^*))
    \leq
    B_m-B_k
    \leq
    B_m.
\end{equation*}
Since \(E(u^j)-E(u^*)\geq E(u^k)-E(u^*)\) for every \(m<j\leq k\), we obtain
\begin{equation}
    \left(
        \sum_{j=m+1}^{k}\alpha_j
    \right)(E(u^k)-E(u^*))
    \leq
    \sum_{j=m+1}^{k}\alpha_j(E(u^j)-E(u^*))
    \leq
    B_m.
\end{equation}
Therefore,
\begin{equation}
\label{eq:energy-gap-convergence}
    E(u^k)-E(u^*)
    \leq
    \frac{B_m}
    {\displaystyle\sum_{j=m+1}^{k}\alpha_j}.
\end{equation}

We next control the \(H^1(\Omega)\)-error by the energy gap. Expanding the energy difference with $F = 0$ yields
\begin{equation}
\label{eq:expand-energy-diff}
    E(u^k)-E(u^*)
    = \frac12 a(u^k-u^*,u^k-u^*)
      + a(u^*,u^k-u^*).
\end{equation}
Since $u^k\in K$, the variational inequality \eqref{eq:obstacle-vi} applied with $v=u^k$
gives $a(u^*,u^k-u^*)\geq 0$. Combined with the uniform ellipticity of $A$,
\begin{equation}
\label{eq:energy-bregman-step}
    E(u^k)-E(u^*)
    \geq \frac12 a(u^k-u^*,u^k-u^*)
    \geq \frac{1}{2\Lambda}\,\|\nabla(u^k-u^*)\|_{L^2(\Omega)}^2.
\end{equation}
Poincar\'e's inequality then gives
\begin{align}
    \|u^k-u^*\|_{H^1(\Omega)}^2
    &\leq
    (C_P^2+1)
    \|\nabla (u^k-u^*)\|_{L^2(\Omega)}^2 \nonumber
    \\
    &\leq
    2\Lambda(C_P^2+1)
    \bigl(E(u^k)-E(u^*)\bigr).
    \label{eq:energy-controls-H1}
\end{align}
This proves the asserted \(H^1(\Omega)\)-error bound.

It remains to derive the rates under the constant-step assumption
\(\alpha_k\equiv\alpha>0\). First, suppose that \(s>1\). For \(k\geq2\),
choose $m=\left\lfloor\frac{k}{2}\right\rfloor$.
Since $k-\left\lfloor\frac{k}{2}\right\rfloor \geq \frac{k}{2}$,
the preceding energy-gap estimate \eqref{eq:energy-gap-convergence} yields
\begin{equation}
    E(u^k)-E(u^*)
    \leq
    \frac{2}{\alpha k}
    B_{\lfloor k/2\rfloor}.
\end{equation}
By \eqref{eq:abstract-bregman-fixed-rate}, we have
\[
    B_{\lfloor k/2\rfloor}
    =
    \mathcal O\left(
        k^{-1/(s-1)}
    \right).
\]
Consequently, we obtain
\begin{equation*}
    E(u^k)-E(u^*) =
    \mathcal O\left(
        k^{-s/(s-1)}
    \right).
\end{equation*}
Combining this with \eqref{eq:energy-controls-H1} gives
\begin{equation*}
    \|u^k-u^*\|_{H^1(\Omega)}
    =
    \mathcal O\left(
        k^{-s/(2(s-1))}
    \right).
\end{equation*}

Finally, suppose that \(s=1\). Taking \(m=k-1\) in \eqref{eq:energy-gap-convergence} gives
\begin{equation}
    E(u^k)-E(u^*)
    \leq
    \frac{B_{k-1}}{\alpha}.
\end{equation}
By \eqref{eq:abstract-bregman-linear-rate}, we have
\[
    B_{k-1}
    \leq
    (1+\alpha\sigma)^{-(k-1)}B_0.
\]
Hence,
\begin{align*}
    E(u^k)-E(u^*)
    &\leq
    \frac{(1+\alpha\sigma)B_0}{\alpha}
    (1+\alpha\sigma)^{-k} =
    \mathcal O\left(
        (1+\alpha\sigma)^{-k}
    \right).
\end{align*}
Applying \eqref{eq:energy-controls-H1} once more yields
\[
    \|u^k-u^*\|_{H^1(\Omega)}
    =
    \mathcal O\left(
        (1+\alpha\sigma)^{-k/2}
    \right).
\]
\end{proof}

\subsection{Sequential strict local minimality of order \texorpdfstring{$s\geq2$}{s >= 2}}

We now verify the order $s=2/\theta$ under the one-sided Bregman growth
condition \eqref{eq:one-sided-bregman-growth} and then invoke \Cref{thm:energy-bregman-convergence} and \Cref{cor:energy-gap-H1-error-convergence}.

\subsubsection{Proof of \Cref{prop:main-energy-bregman}}

\begin{proof}
By \Cref{prop:main-positive-invariance,lem:uniform-bound-above},
\begin{equation}
u^*-\phi\in[0,M_1],
\qquad
u^k-\phi\in(0,M_1],
\qquad
u^*-\phi\leq u^k-\phi
\qquad
\text{a.e. in }\Omega.
\end{equation}
We split the remainder of the proof into three steps.

\medskip
\noindent\textit{Step 1: Lower bound for the energy gap.}
We recall the energy-gap estimate \eqref{eq:energy-bregman-step}:
\begin{equation}
    E(u^k)-E(u^*)
    \geq
    \frac{1}{2\Lambda}
    \|\nabla(u^k-u^*)\|_{L^2(\Omega)}^2.
    \label{eq:energy-bregman-step1}
\end{equation}

\medskip
\noindent\textit{Step 2: Pointwise control of the Bregman divergence.}
By  \Cref{def:one-sided-bregman-growth}, the one-sided Bregman growth condition
\eqref{eq:one-sided-bregman-growth} yields
\begin{equation}
\label{eq:energy-bregman-pointwise}
    D_R(u^*,u^k)
    = D_r(u^*-\phi,u^k-\phi)
    \leq C_{M_1} (u^k-u^*)^\theta
    \qquad \text{a.e. in }\Omega.
\end{equation}

\medskip
\noindent\textit{Step 3: Integral control.}
Integrating \eqref{eq:energy-bregman-pointwise} and applying H\"older's
inequality with exponents $2/\theta\geq 1$ and its conjugate $2/(2-\theta)$ gives
\begin{align}
    B_k
    = \int_\Omega D_R(u^*,u^k) \diff x
    &\leq C_{M_1} \int_\Omega (u^k-u^*)^\theta \diff x \nonumber \\
    &\leq C_{M_1}\,|\Omega|^{(2-\theta)/2}
        \left(\int_\Omega (u^k-u^*)^2\diff x\right)^{\theta/2} \nonumber \\
    &= C_{ M_1}\,|\Omega|^{(2-\theta)/2}\,\|u^k-u^*\|_{L^2(\Omega)}^\theta.
\end{align}
Since $u^k-u^*\in H_0^1(\Omega)$, Poincar\'e's inequality gives
$\|u^k-u^*\|_{L^2(\Omega)}\leq C_P\|\nabla(u^k-u^*)\|_{L^2(\Omega)}$, so
\begin{equation*}
    B_k
    \leq C_{ M_1}\,|\Omega|^{(2-\theta)/2}\,C_P^{\theta}\,
        \|\nabla(u^k-u^*)\|_{L^2(\Omega)}^{\theta}.
\end{equation*}
Raising both sides to the power $2/\theta$,
\begin{equation}
\label{eq:energy-bregman-step3}
    B_k^{2/\theta}
    \leq \bigl(C_{M_1}\,|\Omega|^{(2-\theta)/2}\,C_P^{\theta}\bigr)^{2/\theta}
        \,\|\nabla(u^k-u^*)\|_{L^2(\Omega)}^2.
\end{equation}
Combining \eqref{eq:energy-bregman-step1} and \eqref{eq:energy-bregman-step3},
\begin{equation*}
    E(u^k)-E(u^*)
    \geq \sigma\, B_k^{2/\theta},
    \qquad
    \sigma
    \coloneqq  \frac{1}{2\Lambda\,
        \bigl(C_{M_1}\,|\Omega|^{(2-\theta)/2}\,C_P^{\theta}\bigr)^{2/\theta}}.
\end{equation*}
The constant $\sigma$ depends only on $\Omega$, $A$, $C_{M_1}$,
and $\theta$, as claimed.
\end{proof}

\subsubsection{Proof of \Cref{thm:main-sublinear-convergence}}

\begin{proof}
By \Cref{prop:main-energy-bregman},
\[
    s=\frac{2}{\theta}
\]
is an admissible minimality order.
Since $\theta\in(0,1]$, one has $s>1$. Therefore,
\Cref{thm:energy-bregman-convergence} yields
\[
    B_k
    =
    \mathcal O\left(k^{-1/(s-1)}\right)
    =
    \mathcal O\left(k^{-\theta/(2-\theta)}\right),
\]
which proves \eqref{eq:main-Ak-rate}.

Applying
\Cref{cor:energy-gap-H1-error-convergence} with
$s=2/\theta$ gives
\[
    E(u^k)-E(u^*)
    =
    \mathcal O\left(k^{-s/(s-1)}\right)
    =
    \mathcal O\left(k^{-2/(2-\theta)}\right)
\]
and
\[
    \|u^k-u^*\|_{H^1(\Omega)}
    =
    \mathcal O\left(k^{-s/(2(s-1))}\right)
    =
    \mathcal O\left(k^{-1/(2-\theta)}\right).
\]
These are \eqref{eq:main-energy-rate} and
\eqref{eq:main-H1-rate}, respectively.
\end{proof}

\subsection{Sequential strict local minimality of order \texorpdfstring{$s=1$}{s = 1}}

Finally, under the free-boundary assumptions, we prove the stronger order
$s=1$ for the Shannon and Spence entropies and therefore derive linear convergence rates.

\subsubsection{Auxiliary estimates}

Before proving \Cref{prop:main-local-bregman-coercivity}, we collect three
auxiliary lemmas. The first is a refined upper bound on the Bregman divergence associated with the Shannon and Spence entropies in the positive invariant regime $u\geq u^*$. The estimate captures two distinct behaviors: on the active set, where $u^*=\phi$, the divergence is controlled by a first-order term in $u-u^*$, whereas in the inactive region, where $u^*-\phi$ is positive, it is controlled by the weighted second-order term $\frac{(u-u^*)^2}{u^*-\phi}$.

\begin{lemma}[Refined Bregman bound for the Shannon and Spence entropies]
\label{lem:delicate-bregman-bound}
Let $R(u)=r(u-\phi)$, where $r$ is either the Shannon entropy
$
    r(s)=s\log s - s
$
or the Spence entropy
$
    r(s)=\tfrac12 s^2+\li_2(e^{-s})-\tfrac{\pi^2}{6}.
$
Then for every $M_1>0$ there exists a constant
$\widetilde C_{M_1}>0$ such that
\begin{equation*}
    D_R(v,u)
    \;\leq\;
    \widetilde C_{M_1}\,\frac{(u-v)^2}{(v-\phi)+(u-v)} = \widetilde C_{M_1}\,\frac{(u-v)^2}{u-\phi}
\end{equation*}
for all $\phi\leq v\leq\phi+ M_1 $, $\phi < u \leq \phi+ M_1$ and $v \leq u$. For Shannon entropy, the
bound holds with $\widetilde C_{M_1}=1$ independent of
$M_1$.
\end{lemma}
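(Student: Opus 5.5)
The plan is to reduce to a scalar inequality. Write $a=v-\phi\in[0,M_1]$ and $b=u-\phi\in(0,M_1]$ with $a\le b$, and set $h=b-a\ge0$. Since $D_R(v,u)=D_r(a,b)$, it suffices to show
\[
D_r(a,b)\le \widetilde C_{M_1}\,\frac{(b-a)^2}{b}.
\]
The starting point is the integral form of the Taylor remainder,
\[
D_r(a,b)=\int_a^b r''(t)\,(t-a)\diff t,
\]
which is valid even when $a=0$ because $r$ extends continuously to $0$ and the integrand is integrable: for Shannon it equals $(t-a)/t\le 1$.

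For the Shannon entropy I will compute directly. Here
\[
D_r(a,b)=a\log(a/b)-a+b,
\]
and I want this to be at most $h^2/b$. Setting $x=a/b\in[0,1]$, the inequality becomes
\[
x\log x-x+1\le (1-x)^2 .
\]
Define $g(x)=(1-x)^2-x\log x+x-1$. Then $g(1)=0$, $g'(x)=-2(1-x)-\log x$, and $g''(x)=2-1/x$. So $g'$ decreases on $(0,1/2]$ and increases on $[1/2,1]$, with $g'(1)=0$ and $g'(0^+)=+\infty$. Hence $g'$ is positive near $0$ and then negative up to $1$, so $g$ increases and then decreases to $g(1)=0$. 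Since $g(0)=0$ as well, we get $g\ge0$ on $[0,1]$, which gives the bound with constant $1$.

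For the Spence entropy the plan is to compare $r''$ with the Shannon $r''$. We have $r'(s)=\log(e^s-1)$, so
\[
r''(s)=\frac{e^s}{e^s-1}=\frac{1}{1-e^{-s}}.
\]
On $(0,M_1]$ I claim $r''(s)\le C/s$ with $C=\sup_{0<s\le M_1} s/(1-e^{-s})=M_1/(1-e^{-M_1})$. This holds because $s/(1-e^{-s})$ is increasing and tends to $1$ as $s\to0$. Inserting this into the integral representation gives
\[
D_r(a,b)\le C\int_a^b\frac{t-a}{t}\diff t ,
\]
and the right-hand side is exactly $C$ times the Shannon divergence. The Shannon case then yields $\widetilde C_{M_1}=M_1/(1-e^{-M_1})$.

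The main obstacle is the elementary inequality $g\ge0$. The sign analysis of $g'$ needs care, because $g'$ changes sign on $(0,1)$ while $g$ vanishes at both endpoints. If the monotonicity argument gets messy, an alternative is the bound
\[
\int_a^b\frac{t-a}{t}\diff t\le \frac1b\int_a^b (t-a)\cdot\frac{b}{t}\diff t .
\]
A cruder route is not enough, since $t\ge a$ only gives $(t-a)/t\le h/t$, which loses the factor needed. So I would stick with the explicit function $g$.
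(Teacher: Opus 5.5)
Your proof is correct and follows the paper's route. You reduce to $D_r(a,b)$ with $a=v-\phi\le b=u-\phi$, prove the Shannon bound $D_r(a,b)\le (b-a)^2/b$ directly, and then dominate the Spence second derivative by $\frac{M_1}{1-e^{-M_1}}\,\eta^{-1}$ inside the integral remainder to get $\widetilde C_{M_1}=M_1/(1-e^{-M_1})$. The only difference is cosmetic: your $g$ factors as $g(x)=x\,(x-1-\log x)$, so the sign analysis reduces to $\log x\le x-1$, which is exactly the inequality $\log t\ge 1-t^{-1}$ (with $t=b/a$) that the paper uses in one line.
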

The proof of \Cref{lem:delicate-bregman-bound} is deferred to
\Cref{app:delicate-bregman-bound}.

The second lemma in this section is a one-dimensional weighted integral estimate
that will be used to control the Bregman divergence in the tubular neighborhood
of the free boundary.

\begin{lemma}[One-dimensional weighted estimate]
\label{lem:one-dim-weighted}
Let $\rho>0$ and let $w\in H^1(0,\rho)$ with $w\geq0$. Then there exists a
constant $C>0$ such that
\begin{equation*}
    \int_0^\rho \frac{w(s)^2}{s^2+w(s)}\,ds
    \;\leq\;
    C\left(
        |w(0)|^{3/2}
        + \int_0^\rho |w'(s)|^2\,ds
    \right).
\end{equation*}
\end{lemma}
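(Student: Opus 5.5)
The plan is to split $w$ into its boundary value and an increment that vanishes at $s=0$, and to control the increment by Hardy's inequality. Set $a\coloneqq w(0)\geq0$; this is well defined since $H^1(0,\rho)\hookrightarrow C([0,\rho])$. Put $b(s)\coloneqq w(s)-a$, which is absolutely continuous with $b(0)=0$ and $b'=w'$. The classical one-dimensional Hardy inequality
\begin{equation*}
    \int_0^\rho \frac{b(s)^2}{s^2}\,ds \leq 4\int_0^\rho |w'(s)|^2\,ds
\end{equation*}
holds with a constant independent of $\rho$. If needed, I would prove it by writing $b(s)=\int_0^s w'$ and applying the weighted Cauchy--Schwarz/Hardy argument, or by density from smooth functions vanishing at $0$.

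Pointwise, $(a+b)^2\leq 2a^2+2b^2$ and $s^2+w\geq s^2$ (since $w\geq0$), so
\begin{equation*}
    \frac{w^2}{s^2+w}\leq \frac{2a^2}{s^2+w}+\frac{2b^2}{s^2}.
\end{equation*}
The second term is handled directly by Hardy. For the first term, I will split $(0,\rho)$ into two sets according to whether $w(s)\geq a/2$ or $w(s)<a/2$.

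On $\{w\geq a/2\}$ we have
\begin{equation*}
    \frac{2a^2}{s^2+w}\leq \frac{2a^2}{s^2+a/2}.
\end{equation*}
Integrating over all of $(0,\infty)$ gives at most $2a^2\cdot\frac{\pi}{2}(a/2)^{-1/2}=\sqrt2\,\pi\,a^{3/2}$, which produces the $|w(0)|^{3/2}$ term. This is the only place the scaling $3/2$ enters: it comes from the scale $s\sim\sqrt a$ at which $s^2$ balances $w$.

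On $\{w<a/2\}$ we have $|b|=a-w>a/2$, hence $a^2<4b^2$. Using $s^2+w\geq s^2$ again, the first term is bounded by $8b^2/s^2$, which Hardy controls. When $a=0$, only the Hardy term survives.

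Summing the pieces gives the claim with an absolute constant, e.g.\ $C=\max\{\sqrt2\,\pi,\,40\}$, independent of $\rho$ and $w$. The only step requiring care is the case split. A naive bound of $a^2/(s^2+w)$ by $a^2/s^2$ is not integrable at $0$. Conversely, bounding $w$ from above by $a+s^{1/2}\|w'\|_{L^2}$ gives a non-integrable $1/s$ term. The case split avoids both: where $w$ stays comparable to $a$, the denominator is bounded below by $a/2$; where $w$ has dropped well below $a$, the increment $b$ is large enough that Hardy absorbs $a^2$.
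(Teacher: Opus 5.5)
Your proof is correct, and it takes a genuinely different route from the paper.

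\textbf{The paper's split.} The paper splits the interval \emph{in space} at the balancing scale $\sigma=\sqrt{a}$.
\begin{itemize}
\item On $(0,\sigma)$ it uses $w^2/(s^2+w)\le w$ together with the pointwise bound $w(s)\le a+s^{1/2}\|w'\|_{L^2}$.
\item On $(\sigma,\rho)$ it uses $w^2/s^2\le 2(w-a)^2/s^2+2a^2/s^2$, Hardy, and the tail $\int_\sigma^\infty 2a^2s^{-2}\,ds=2a^{3/2}$.
\end{itemize}
This needs a constant extension of $w$ when $\rho<\sqrt a$. It also needs a Young step to absorb the mixed term $a^{3/4}\|w'\|_{L^2}$, ending with $C=25/3$.

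\textbf{Your split.} You split by the \emph{level set} $\{w\ge a/2\}$ instead.
\begin{itemize}
\item On $\{w\ge a/2\}$ the denominator is bounded below by $s^2+a/2$, and the Cauchy-kernel integral produces the $a^{3/2}$ scaling directly.
\item On $\{w<a/2\}$ the deviation satisfies $|w-a|>a/2$, so Hardy absorbs $a^2$.
\end{itemize}
This avoids the pointwise H\"older bound, the extension device, and the cross term. It works verbatim for any $\rho\in(0,\infty]$, at the cost of a larger but still absolute constant. On $\{w<a/2\}$ you could even bound $w^2\le (w-a)^2$ directly, which tightens your constant.

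\textbf{What both give.} Both arguments yield a constant independent of $w$ and $\rho$. This uniformity is what is actually needed when the lemma is applied fiberwise in $y\in\Gamma$ and integrated over $\Gamma$ in the proof of the order-one minimality estimate.

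\textbf{A correction to your closing remark.} The bound $w\le a+s^{1/2}\|w'\|_{L^2}$ produces a non-integrable $1/s$ term only if you insert it into $w^2/s^2$. Paired with $w^2/(s^2+w)\le w$ on $(0,\sqrt a)$, as the paper does, it is harmless. So it is not an obstruction to the spatial split.
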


The third lemma is a trace interpolation inequality. It controls the $L^{3/2}$
trace norm by a sum of $L^1$ norm and $H^1$ seminorm, with a constant that
depends on the $H^1$-norm bound.

\begin{lemma}[Trace interpolation]
\label{lem:trace-interpolation}
Let $D\subset\mathbb R^d$ be a bounded Lipschitz open set with finitely many
connected components, and let $M>0$. Then there exists a constant
$C=C(D,M)>0$ such that for every $w\in H^1(D)$ with $w\geq0$ a.e. in $D$
and $\|w\|_{H^1(D)}\leq M$, one has
\begin{equation*}
    \|w\|_{L^{3/2}(\partial D)}^{3/2}
    \leq
    C\left(
        \|w\|_{L^1(D)}
        +
        \|\nabla w\|_{L^2(D)}^2
    \right).
\end{equation*}
\end{lemma}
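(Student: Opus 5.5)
We need to prove the trace interpolation inequality: for $w\ge0$ with $\|w\|_{H^1(D)}\le M$,
\[
\|w\|_{L^{3/2}(\partial D)}^{3/2}\le C\bigl(\|w\|_{L^1(D)}+\|\nabla w\|_{L^2(D)}^2\bigr).
\]
The idea is to apply the trace inequality in $W^{1,1}$ to the power $w^{3/2}$, and then split the resulting gradient term by Young's inequality.

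Since $\partial D$ is Lipschitz and $D$ is bounded, the $W^{1,1}$ trace theorem gives, for every $v\in W^{1,1}(D)$,
\[
\|v\|_{L^1(\partial D)}\le C_D\bigl(\|v\|_{L^1(D)}+\|\nabla v\|_{L^1(D)}\bigr).
\]
The finitely-many-components hypothesis only ensures this holds with a single constant, by summing over components. Set $v=w^{3/2}$. Since $w\ge0$ lies in $H^1$, the chain rule applied to the Lipschitz truncations $\min(w,n)^{3/2}$, followed by monotone convergence, gives $v\in W^{1,1}(D)$ with $\nabla v=\tfrac32 w^{1/2}\nabla w$. This is integrable by Cauchy--Schwarz, because $w^{1/2}\in L^2$ when $w\in L^1$. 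Moreover the trace of $v$ equals $(\trace w)^{3/2}$, which holds by density of smooth functions and continuity of the trace in both spaces. Hence
\[
\|w\|_{L^{3/2}(\partial D)}^{3/2}\le C_D\Bigl(\int_D w^{3/2}\,dx+\tfrac32\int_D w^{1/2}|\nabla w|\,dx\Bigr).
\]

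The gradient term is handled by Young's inequality:
\[
w^{1/2}|\nabla w|\le \tfrac12 w+\tfrac12|\nabla w|^2 .
\]
Integrating gives $\int_D w^{1/2}|\nabla w|\le\tfrac12\|w\|_{L^1}+\tfrac12\|\nabla w\|_{L^2}^2$, which is exactly the desired form.

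The remaining term $\int_D w^{3/2}$ is where the bound $M$ enters, and I expect it to be the only real obstacle, since we must not produce an $L^2$ norm of $w$. The plan is to interpolate $L^{3/2}$ between $L^1$ and $L^{2^*}$ and then use Young to reach a power-one combination of $\|w\|_{L^1}$ and $\|\nabla w\|_{L^2}^2$, absorbing the excess powers into $M$.

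For $d\ge3$, write $\tfrac{2}{3}=\tfrac{1-t}{1}+\tfrac{t}{2^*}$, which gives
\[
\|w\|_{L^{3/2}}^{3/2}\le \|w\|_{L^1}^{\frac32(1-t)}\,\|w\|_{L^{2^*}}^{\frac32 t}.
\]
For $d\le2$, use $L^q$ with $q$ large instead; the argument is the same.

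A simpler route avoids Sobolev embedding altogether. By Cauchy--Schwarz,
\[
\int_D w^{3/2}=\int_D w^{1/2}\,w\le \|w\|_{L^1}^{1/2}\|w\|_{L^2}\le \|w\|_{L^1}^{1/2}M.
\]
This gives only $\|w\|_{L^1}^{1/2}$, not $\|w\|_{L^1}$, which is insufficient when $\|w\|_{L^1}$ is small.

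So I would split $w=\bar w+(w-\bar w)$, where $\bar w$ is the mean of $w$ on each component. The mean part satisfies $\int_D \bar w^{3/2}\le C\|w\|_{L^1}^{3/2}\le C(M)\|w\|_{L^1}$, using $\|w\|_{L^1}\le C M$. The oscillating part $w-\bar w$ is controlled by Poincaré–Wirtinger and Sobolev in terms of $\|\nabla w\|_{L^2}$. The cross terms are then handled by Young's inequality, giving
\[
\int_D w^{3/2}\le C\bigl(\|w\|_{L^1}+\|\nabla w\|_{L^2}^{3/2}\bigr),
\]
with the mixed contributions absorbed because $\|\nabla w\|_{L^2}\le M$.

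However, $\|\nabla w\|^{3/2}$ is not bounded by $\|\nabla w\|^2$ when $\|\nabla w\|$ is small, so this step still needs care. I would fix it by estimating the trace of $w-\bar w$ directly:
\[
\|w-\bar w\|_{L^{3/2}(\partial D)}\le C\|w-\bar w\|_{H^1}\le C\|\nabla w\|_{L^2}.
\]
The mean contributes $|\partial D|\,\bar w^{3/2}\le C\|w\|_{L^1}^{3/2}\le C(M)\|w\|_{L^1}$. Checking carefully that the remaining exponents close, and otherwise reverting to the $W^{1,1}$ trace of $w^{3/2}$ near the mean, is the step I expect to be hardest.
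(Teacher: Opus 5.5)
Your first two steps are sound and match the paper: the $W^{1,1}$ trace inequality applied to $w^{3/2}$, and the gradient term $\int_D w^{1/2}|\nabla w|$. Your pointwise Young inequality there is a slightly quicker version of the paper's Cauchy--Schwarz plus Young.

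The genuine gap is the bulk term $\int_D w^{3/2}$, which your proposal leaves open. The mean/oscillation split you settle on bounds $\int_D |w-\bar w|^{3/2}$ by $\|\nabla w\|_{L^2}^{3/2}$. That bound is too lossy to be repaired. For small norms, a product $\|w\|_{L^1}^{a}\|\nabla w\|_{L^2}^{b}$ is dominated by $C(M)\bigl(\|w\|_{L^1}+\|\nabla w\|_{L^2}^2\bigr)$ only when $a+b/2\geq1$, whereas here $a+b/2=3/4$. Your proposed fix estimates $\|w-\bar w\|_{L^{3/2}(\partial D)}$ by the $H^1$ trace theorem. After raising to the power $3/2$ this produces the same $\|\nabla w\|_{L^2}^{3/2}$, so it fails for the same reason. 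It also discards the $W^{1,1}$ trace of $w^{3/2}$, which is exactly what makes the exponent $3/2$ work on the boundary.

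The missing idea is that the Cauchy--Schwarz step you discarded was the right one; you only bounded $\|w\|_{L^2}$ too crudely by $M$. Apply Poincar\'e--Wirtinger on each connected component $D_j$ to the $L^2$ factor only, while keeping the factor $w^{1/2}$ controlled by $\|w\|_{L^1}^{1/2}$. This, rather than the trace theorem, is where the finitely-many-components hypothesis matters:
\begin{equation*}
    \int_{D_j} w^{3/2}
    \leq \|w\|_{L^1(D_j)}^{1/2}\|w\|_{L^2(D_j)}
    \leq C_{P,j}\|w\|_{L^1(D_j)}^{1/2}\|\nabla w\|_{L^2(D_j)}
    + |D_j|^{-1/2}\|w\|_{L^1(D_j)}^{3/2}.
\end{equation*}
The first term has $a+b/2=1$, so Young's inequality bounds it by $\tfrac{C_{P,j}}{2}\bigl(\|w\|_{L^1(D_j)}+\|\nabla w\|_{L^2(D_j)}^2\bigr)$. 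The second term is at most $|D_j|^{-1/4}M^{1/2}\|w\|_{L^1(D_j)}$, because $\|w\|_{L^1(D_j)}\leq|D_j|^{1/2}M$. Summing over components gives the lemma; this is exactly the paper's argument.

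Your first plan, interpolating between $L^1$ and $L^{2^*}$, would also close. It requires bounding $\|w\|_{L^{2^*}}$ by $C\bigl(\|\nabla w\|_{L^2}+\|w\|_{L^1}\bigr)$ via Sobolev--Poincar\'e rather than by $CM$. For $d=3$ the leading term then becomes $\|w\|_{L^1}^{9/10}\|\nabla w\|_{L^2}^{3/5}\leq\bigl(\|w\|_{L^1}+\|\nabla w\|_{L^2}^2\bigr)^{6/5}$, and the excess power $1/5$ is absorbed using $M$. More generally, any intermediate exponent $p\geq2$ for which Sobolev--Poincar\'e holds works. The case $p=2$ is the Cauchy--Schwarz route, which has no excess power at all.
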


The proofs of \Cref{lem:one-dim-weighted} and \Cref{lem:trace-interpolation} are
deferred to \Cref{app:one-dim-weighted} and \Cref{app:trace-interpolation},
respectively.

\subsubsection{Proof of \Cref{prop:main-local-bregman-coercivity}}

\begin{proof}
Fix $k\geq0$ and set $u\coloneqq u^k$. By
\Cref{prop:main-positive-invariance}, \Cref{lem:uniform-bound-above} and \Cref{lem:uniform-H1-bound},
\begin{equation}
    u^*\leq u\leq \phi+ M_1
    \qquad\text{a.e. in }\Omega,
    \qquad
    \|u\|_{H^1(\Omega)}\leq M_2.
\end{equation}
Throughout the proof, we write
\begin{equation}
    e \coloneqq u-u^*,
    \qquad
    w^* \coloneqq u^*-\phi.
\end{equation}
Then $e\geq0$ a.e. in $\Omega$ and $e\in H_0^1(\Omega)$.

\medskip
\noindent\textit{Step 1: Lower bound for the energy gap.}
By \eqref{eq:expand-energy-diff}, expanding the quadratic energy yields
\begin{equation*}
    E(u)-E(u^*)
    = \frac12 a(e,e) + a(u^*,e).
\end{equation*}
By the variational inequality for $u^*$ and the representation
\eqref{eq:E-prime-representation}, we obtain
\begin{equation*}
    a(u^*,e)
    = \langle E'(u^*),e\rangle
    = \int_\Omega \lambda^* e \diff x.
\end{equation*}
Decomposing $\Omega = \Omega_0\cup\Omega_+$, on the inactive set $\Omega_+$
we have $\lambda^*=0$ a.e. on $\Omega_+$. On the active set $\Omega_0$,
strict complementarity \eqref{eq:strict-complementarity-density} gives
$\lambda^*\geq c_\phi$. Therefore,
\begin{equation*}
    \int_\Omega \lambda^* e \diff x
    = \int_{\Omega_0} \lambda^* e \diff x
    \geq c_\phi\, \|e\|_{L^1(\Omega_0)}.
\end{equation*}
Combined with the uniform ellipticity $a(e,e)\geq\frac{1}{\Lambda}\|\nabla e\|_{L^2(\Omega)}^2$,
\begin{equation}
\label{eq:coercivity-energy-lower}
    E(u)-E(u^*)
    \;\geq\; \frac{1}{2\Lambda}\|\nabla e\|_{L^2(\Omega)}^2
        + c_\phi\,\|e\|_{L^1(\Omega_0)}.
\end{equation}

\medskip
\noindent\textit{Step 2: Decomposition.}
By \Cref{prop:strict-complementarity-free-boundary}, fix $\rho>0$ as
in the tubular parametrization and set $\rho_0\coloneqq \rho/2$. Decompose
\begin{equation*}
    \Omega = \Omega_0 \;\cup\; \mathcal N_{\rho_0}^+(\Gamma) \;\cup\;
    \Omega_{\mathrm{far}},
\end{equation*}
where
\begin{equation*}
    \Omega_{\mathrm{far}}
    \coloneqq  \Omega_+ \setminus \mathcal N_{\rho_0}^+(\Gamma)
    = \{x\in\Omega_+ \colon \dist(x,\Gamma)\geq \rho_0\}.
\end{equation*}
By \eqref{eq:separation-away-free-boundary} there exists $c_{\rho_0}>0$
such that $w^*\geq c_{\rho_0}$ on $\Omega_{\mathrm{far}}$. We estimate
$\int_\Omega D_R(u^*,u) \diff x$ on each of the three regions separately.

\medskip
\noindent\textit{Step 3: Estimate on $\Omega_0$.}
On $\Omega_0$, $u^*=\phi$. By $\theta=1$ in \Cref{def:one-sided-bregman-growth} and $u - \phi \leq M_1$, we obtain
\begin{equation*}
    D_R(u^*,u) \leq C_{M_1} (u-u^*) = C_{M_1}\, e
    \qquad \text{a.e. on }\Omega_0.
\end{equation*}
Hence
\begin{equation}
\label{eq:coercivity-active}
    \int_{\Omega_0} D_R(u^*,u) \diff x
    \;\leq\; C_{M_1} \,\|e\|_{L^1(\Omega_0)}.
\end{equation}

\medskip
\noindent\textit{Step 4: Estimate on $\Omega_{\mathrm{far}}$.}
By \Cref{lem:delicate-bregman-bound},
\begin{equation*}
    D_R(u^*,u)
    \leq \widetilde C_{M_1}\,\frac{e^2}{w^*+e}
    \leq \widetilde C_{M_1}\,\frac{e^2}{w^*}
    \leq \widetilde C_{M_1}\,c_{\rho_0}^{-1}\, e^2
    \qquad \text{a.e. on }\Omega_{\mathrm{far}}.
\end{equation*}
Therefore
\begin{equation*}
    \int_{\Omega_{\mathrm{far}}} D_R(u^*,u) \diff x
    \leq \widetilde C_{M_1}\,c_{\rho_0}^{-1}\,\|e\|_{L^2(\Omega)}^2
    \leq \widetilde C_{M_1}\,c_{\rho_0}^{-1}\,C_P^2\,\|\nabla e\|_{L^2(\Omega)}^2,
\end{equation*}
where the last step uses Poincar\'e's inequality and $e\in H_0^1(\Omega)$. Thus
\begin{equation}
\label{eq:coercivity-far}
    \int_{\Omega_{\mathrm{far}}} D_R(u^*,u) \diff x
    \;\leq\; \widetilde C_{M_1}\,c_{\rho_0}^{-1}\,C_P^2\,\|\nabla e\|_{L^2(\Omega)}^2.
\end{equation}

\medskip
\noindent\textit{Step 5: Estimate on $\mathcal N_{\rho_0}^+(\Gamma)$.}
This is the delicate step. We first derive a pointwise bound in tubular
coordinates. By \Cref{lem:delicate-bregman-bound} and the quadratic separation
\eqref{eq:quadratic-separation-parametrized},
\begin{equation*}
    D_R(u^*,u)
    \leq \widetilde C_{M_1}\,\frac{e^2}{w^*+e}
    \leq \widetilde C_{M_1}\,\frac{e^2}{c_1^{-1}s^2+e}
    \leq C\,\frac{e^2}{s^2+e},
\end{equation*}
in tubular coordinates $x=\Psi(y,s)$, $(y,s)\in\Gamma\times(0,\rho_0)$,
where $C=\widetilde C_{M_1}/\min\{c_1^{-1},1\}$.

Then, using the tubular parametrization and the Jacobian bound
\eqref{eq:tubular-jacobian-comparable}, with $e(y,s)\coloneqq e(\Psi(y,s))$, we have
\begin{equation*}
    \int_{\mathcal N_{\rho_0}^+(\Gamma)} \frac{e^2}{s^2+e} \diff x
    \leq C_J \int_\Gamma \int_0^{\rho_0}
        \frac{e(y,s)^2}{s^2+e(y,s)} \diff s \diff \sigma(y).
\end{equation*}

By \eqref{eq:composition-regularity}, we obtain that $e\circ\Psi\in
L^2(\Gamma;H^1(0,\rho_0))$ and $e$ is non-negative.
\Cref{lem:one-dim-weighted} gives
\begin{equation*}
    \int_0^{\rho_0} \frac{e(y,s)^2}{s^2+e(y,s)}\,ds
    \leq C\left(|e(y,0)|^{3/2}
        + \int_0^{\rho_0} |\partial_s e(y,s)|^2\,ds\right).
\end{equation*}
Integrating over $\Gamma$ and using \eqref{eq:gradient-bound} after
reverting the change of variables,
\begin{equation}
\label{eq:coercivity-layer-intermediate}
    \int_{\mathcal N_{\rho_0}^+(\Gamma)} D_R(u^*,u) \diff x
    \leq C\left(
        \|e\|_{L^{3/2}(\Gamma)}^{3/2}
        + \|\nabla e\|_{L^2(\Omega)}^2
    \right).
\end{equation}

The trace term can be controlled by \Cref{lem:trace-interpolation}.
Since $\Gamma=\partial\Omega_0\cap\Omega$ and $e\geq 0$, the trace of $e$ on
$\Gamma$ can be estimated by the values of $e$ on $\Omega_0$. Specifically,
$e\in H^1(\Omega_0)$ with
\begin{equation*}
    \|e\|_{H^1(\Omega_0)}
    \leq \|u\|_{H^1(\Omega)}+\|u^*\|_{H^1(\Omega)}
    \leq M_2 + \|u^*\|_{H^1(\Omega)}
    \eqqcolon M',
\end{equation*}
and the trace of $e$ on $\Gamma$ from the $\Omega_0$ side coincides with
the trace from the $\Omega_+$ side. Applying \Cref{lem:trace-interpolation}
on $\intr(\Omega_0)$ to $e\geq 0$ with $H^1$-bound $M'$,
\begin{equation*}
    \|e\|_{L^{3/2}(\Gamma)}^{3/2}
    \leq C\bigl(\|e\|_{L^1(\Omega_0)} + \|\nabla e\|_{L^2(\Omega_0)}^2\bigr)
    \leq C\bigl(\|e\|_{L^1(\Omega_0)} + \|\nabla e\|_{L^2(\Omega)}^2\bigr).
\end{equation*}
Substituting into \eqref{eq:coercivity-layer-intermediate},
\begin{equation}
\label{eq:coercivity-layer}
    \int_{\mathcal N_{\rho_0}^+(\Gamma)} D_R(u^*,u) \diff x
    \leq C\bigl(
        \|e\|_{L^1(\Omega_0)} + \|\nabla e\|_{L^2(\Omega)}^2
    \bigr).
\end{equation}

\medskip
\noindent\textit{Step 6: Conclusion.}
Summing \eqref{eq:coercivity-active}, \eqref{eq:coercivity-far}, and
\eqref{eq:coercivity-layer},
\begin{equation}
\label{eq:coercivity-bregman-upper}
    \int_\Omega D_R(u^*,u) \diff x
    \;\leq\; C^*\bigl(
        \|e\|_{L^1(\Omega_0)} + \|\nabla e\|_{L^2(\Omega)}^2
    \bigr),
\end{equation}
for some constant $C^*=C^*(\Omega,A,u^*,\phi,M_2,C_{M_1},\widetilde C_{M_1},c_\phi,c_1,
C_J)>0$.

On the other hand, from \eqref{eq:coercivity-energy-lower},
\begin{equation*}
    E(u)-E(u^*)
    \geq \min\!\left(\frac{1}{2\Lambda},\,c_\phi\right)
        \bigl(\|\nabla e\|_{L^2(\Omega)}^2 + \|e\|_{L^1(\Omega_0)}\bigr).
\end{equation*}
Combining with \eqref{eq:coercivity-bregman-upper},
\begin{equation*}
    E(u)-E(u^*)
    \;\geq\; \sigma \int_\Omega D_R(u^*,u) \diff x,
    \qquad
    \sigma
    \coloneqq  \frac{\min((2\Lambda)^{-1},\,c_\phi)}{C^*}>0.
\end{equation*}
The constant $\sigma$ depends only on the quantities listed above,
all of which are independent of $u$. This completes the proof.
\end{proof}

\subsubsection{Proof of \Cref{thm:main-linear-convergence}}

\begin{proof}
By \Cref{prop:main-local-bregman-coercivity},
$s=1$ is an admissible minimality order with constant $\sigma$.
Hence \Cref{thm:energy-bregman-convergence} gives
\[
    B_k
    \leq
    (1+\alpha\sigma)^{-k}B_0
    =
    \varrho^kB_0,
\]
which proves \eqref{eq:main-linear-Ak}.

Applying \Cref{cor:energy-gap-H1-error-convergence} gives
\[
    E(u^k)-E(u^*)
    =
    \mathcal O\left((1+\alpha\sigma)^{-k}\right)
    =
    \mathcal O(\varrho^k)
\]
and
\[
    \|u^k-u^*\|_{H^1(\Omega)}
    =
    \mathcal O\left((1+\alpha\sigma)^{-k/2}\right)
    =
    \mathcal O(\varrho^{k/2}).
\]
This proves \eqref{eq:main-linear-energy} and
\eqref{eq:main-linear-H1}.
\end{proof}

\section{Conclusion}
\label{sec:conclusion}

We have developed a continuous-level theory for the Bregman proximal point
method applied to obstacle problems associated with coercive quadratic
energies. The framework accommodates a broad class of Legendre functions,
including the Shannon, Spence, Tsallis, and Kaniadakis entropies, and provides
a unified treatment of the semilinear elliptic subproblems and the resulting
iterative scheme.

For a general class of Legendre functions, we established the well-posedness
and strict feasibility of the proximal subproblems. We also derived several a
priori properties of the iterates, including positive invariance, a uniform
upper bound, energy dissipation, and uniform boundedness in $H^1(\Omega)$.
These estimates provide the structural control required for the convergence
theory.

The central mechanism of the convergence theory is sequential strict local minimality, whose order $s\geq1$ determines the convergence rate. Combined with
the one-step Bregman descent estimate, the case $s>1$ yields sublinear
convergence, whereas $s=1$ yields linear convergence. Assuming that the initial
iterate satisfies $u^0\geq u^*$ and a one-sided Bregman growth condition holds
with exponent $\theta\in(0,1]$, we established sequential strict local minimality
of order $s=2/\theta \geq 2$. Consequently,
\begin{equation*}
\int_\Omega D_R(u^*,u^k) \diff x
=
\mathcal O\left(k^{-\theta/(2-\theta)}\right),
\qquad
\|u^k-u^*\|_{H^1(\Omega)}
=
\mathcal O\left(k^{-1/(2-\theta)}\right).
\end{equation*}
For the Shannon and Tsallis entropies, we further established that the derived
sublinear convergence rates are sharp in a uniform worst-case sense. This was
shown by constructing admissible families of problems whose limiting problems
attain exactly the decay exponents predicted by the convergence theory.

Under the additional assumptions on the free boundary and obstacle, we
established sequential strict local minimality of order $s = 1$ for the Shannon
and Spence entropies. This strengthens the general sublinear estimates to
linear estimates:
\begin{equation*}
\int_\Omega D_R(u^*,u^k) \diff x
=
\mathcal O\left(
\left(1+\alpha\sigma\right)^{-k}
\right),
\qquad
\|u^k-u^*\|_{H^1(\Omega)}
=
\mathcal O\left(
\left(1+\alpha\sigma\right)^{-k/2}
\right).
\end{equation*}

\medskip

The analysis raises several questions that we leave for future work.
\begin{itemize}
    \item \emph{Sharpness of the assumptions.} The geometric assumptions on the free boundary are sufficient but possibly not necessary to permit the  minimality order $s = 1$. Likewise, although the initialization assumption, i.e., $u^0 \geq u^*$, is essential to the present analysis, numerical evidence suggests that it may not be necessary in practice. Identifying minimal geometric conditions and removing, or substantially weakening, the initialization assumption would clarify the full scope of both the sublinear and linear convergence regimes.

    \item \emph{Intermediate minimality orders.}
    The one-sided Bregman growth argument developed in this paper
    yields an admissible minimality order $s\geq2$,
    whereas additional assumptions yield sequential strict local minimality of order $s=1$. It remains open to identify conditions that yield admissible minimality orders in the intermediate regime
    $1<s<2$. Understanding this intermediate regime would help clarify the transition between sublinear and linear convergence.

    \item \emph{Discretization.} The present analysis is carried out at the
    continuous level. For the fully discrete proximal Galerkin scheme
    \cite{keith2024proximal}, an abstract a priori error analysis was recently
    established in \cite{keith2025priori}, combining the optimization error with
    the finite element discretization error in a unified framework. The
    continuous convergence estimates developed here suggest that these fully
    discrete rates may be further sharpened. In particular, establishing optimal
    mesh-independent sublinear convergence, linear convergence under some additional assumptions, and a rigorous
    discrete analogue of the sublinear-to-linear transition remain important
    directions for future work.

    \item \emph{Non-symmetric and non-energy variational inequalities.} Our
    analysis exploits the energy structure throughout. Extension to
    variational inequalities arising from non-symmetric bilinear forms
    \cite{fu2026proximal} or to variational inequalities without an
    underlying energy principle is a natural direction.
\end{itemize}

\appendix \crefalias{section}{appendix}

\section{Construction of an ordered initial iterate}
\label{app:ordered-initialization}

This section shows that the condition $u^0\geq u^*$ can be
enforced without prior knowledge of $u^*$. We first prove that every
admissible supersolution lies above $u^*$, and then construct such a
supersolution using only the obstacle $\phi$.

\begin{lemma}
\label{lem:supersolution-comparison}
Suppose that \(u^0\in K\) is a weak supersolution \cite[Definition 5.6]{kinderlehrer2000introduction}, that is,
\begin{equation}
\label{eq:super-solution}
    a(u^0,v)\geq 0
    \qquad
    \forall v\in H_0^1(\Omega),\quad v\geq 0,
\end{equation}
then
\[
    u^0\geq u^*
    \qquad \text{a.e. in }\Omega.
\]
\end{lemma}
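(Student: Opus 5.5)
The plan is a comparison argument using a truncation test function, much as in the proof of \Cref{prop:main-positive-invariance}, but applied to the variational inequality for \(u^*\) instead of the proximal equation. We work in the homogeneous setting \eqref{eq:homogeneous-setting}, so \(u^0, u^* \in H_0^1(\Omega)\). Set
\[
    v \coloneqq (u^*-u^0)_+ .
\]
Then \(v\in H^1(\Omega)\), by the truncation results cited earlier. Its trace is \((\trace(u^*-u^0))_+ = 0\), so \(v\in H_0^1(\Omega)\), and clearly \(v\geq 0\).

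The first step is to check that \(u^*-v\) is admissible in \eqref{eq:obstacle-vi}. Pointwise, \(u^*-v=\min\{u^*,u^0\}\). Both \(u^*\) and \(u^0\) lie in \(K\), so this minimum is at least \(\phi\), and it belongs to \(H_0^1(\Omega)\); hence \(u^*-v\in K\). Testing \eqref{eq:obstacle-vi} (with \(F=0\)) at this point gives
\[
    a(u^*,-v)\geq 0, \qquad\text{that is,}\qquad a(u^*,v)\leq 0 .
\]
The supersolution property \eqref{eq:super-solution}, applied to the nonnegative test function \(v\), gives \(a(u^0,v)\geq 0\). Subtracting the two inequalities yields
\[
    a(u^*-u^0,v)\leq 0 .
\]

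Next, the chain rule for truncations gives \(\nabla v=\nabla(u^*-u^0)\mathbf 1_{\{u^*>u^0\}}\). Therefore \(a(u^*-u^0,v)=a(v,v)\), and uniform ellipticity \eqref{eq:uniform-ellipticity} gives
\[
    \Lambda^{-1}\|\nabla v\|_{L^2(\Omega)}^2\leq a(v,v)\leq 0 .
\]
So \(\nabla v=0\). Since \(v\in H_0^1(\Omega)\), Poincaré's inequality gives \(v=0\) a.e., which means \(u^0\geq u^*\) a.e. in \(\Omega\).

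There is no real obstacle here. The only point that needs care is confirming that the minimum of two elements of \(K\) stays in \(K\) (in particular, that it has zero trace). In the nonhomogeneous case the same argument goes through with \(H_g^1\) in place of \(H_0^1\), after the reduction described in the remark on nonhomogeneous data.
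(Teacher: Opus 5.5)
Your proof is correct and is essentially identical to the paper's. It uses the same truncation $z=(u^*-u^0)_+\in H_0^1(\Omega)$ and the same competitor $u^*-z=\min\{u^*,u^0\}\in K$ in the variational inequality, and it combines the resulting $a(u^*,z)\leq 0$ with the supersolution inequality $a(u^0,z)\geq 0$ and uniform ellipticity, via $a(u^*-u^0,z)=a(z,z)$, to force $z=0$.
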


\begin{proof}

Set \(z=(u^*-u^0)_+ \in H_0^1(\Omega)\). Since
    $u^*-z=\min\{u^*,u^0\}\in K$,
testing the variational inequality \eqref{eq:obstacle-vi} with $u^* - z$ and using $F = 0$ gives
\begin{equation}
\label{eq:VI-conseq}
    a(u^*,z)\leq 0.
\end{equation}
On the other hand, taking $v=z$ in \eqref{eq:super-solution} yields
\begin{equation}
\label{eq:super-solution-conseq}
    a(u^0,z)\geq 0.
\end{equation}
Therefore, combining \eqref{eq:VI-conseq} and \eqref{eq:super-solution-conseq} gives
\begin{equation}
    a(u^*- u^0,z)\leq 0.
\end{equation}
By \eqref{eq:uniform-ellipticity}, we obtain
\begin{equation}
    a(u^*-u^0,z)=a(z,z)\geq \frac{1}{\Lambda} \|\nabla z\|_{L^2(\Omega)}^2.
\end{equation}
Since $z\in H_0^1(\Omega)$, we obtain \(z=0\). Hence
\(u^*\leq u^0\) almost everywhere.
\end{proof}

\begin{proposition}[Construction of an ordered initial iterate]
Let
\(w\in C^2(\Omega)\cap C(\overline{\Omega})\) be the strong solution of
\begin{equation}
\label{eq:torsion-problem}
\begin{cases}
    -\nabla\!\cdot\!\bigl(A(x)\nabla w\bigr)=1
        &  \text{in }\Omega,\\
    w=0
        & \text{on }\partial\Omega.
\end{cases}
\end{equation}
Assume that \(\phi\in C(\overline{\Omega})\) satisfies \eqref{eq:boundary-compatibility}.
Then there exists \(C_0\geq1\) such that, for every
\(C\geq C_0\),
\[
    u^0 \coloneqq Cw\in K^\circ
    \qquad\text{and}\qquad
    u^0\geq u^*
    \quad\text{a.e. in }\Omega.
\]
\end{proposition}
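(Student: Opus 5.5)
The plan is to show two things: that $Cw$ lies strictly above $\phi$ uniformly for large $C$, and that $Cw$ is a weak supersolution, so that \Cref{lem:supersolution-comparison} applies. We are in the homogeneous setting \eqref{eq:homogeneous-setting}, so $g=0$ and \eqref{eq:boundary-compatibility} reads $-\phi>\delta_0$ on $\partial\Omega$.

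\textbf{Supersolution property.} For $v\in H_0^1(\Omega)$ with $v\geq0$ we have $a(Cw,v)=C\int_\Omega v\diff x\geq 0$. This holds because $w$ is also the weak solution of \eqref{eq:torsion-problem}. Since $w\in C^2(\Omega)\cap C(\overline\Omega)$ with zero boundary values, I will take $w\in H_0^1(\Omega)$ as given by the standard theory, as in the proof of \Cref{thm:entropic-pde} where $\psi$ plays the same role. By the weak maximum principle, $w\geq0$. In fact $w>0$ in $\Omega$ by the strong maximum principle, since $w\not\equiv 0$. Moreover $w\in L^\infty(\Omega)$, so $Cw\in H_0^1(\Omega)\cap L^\infty(\Omega)$.

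\textbf{Strict feasibility; this is the main step.} I need $\inf_\Omega (Cw-\phi)>0$ for large $C$.

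Near the boundary I use continuity of $\phi$ on $\overline\Omega$ together with $\phi<-\delta_0$ on $\partial\Omega$. These give a relatively open neighborhood $U$ of $\partial\Omega$ in $\overline\Omega$ on which $\phi\leq -\delta_0/2$. Since $w\geq0$, on $U\cap\Omega$ we get $Cw-\phi\geq \delta_0/2$ for every $C\geq 0$.

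On the compact set $\overline\Omega\setminus U\subset\Omega$, the function $w$ is continuous and strictly positive, so $m:=\min_{\overline\Omega\setminus U} w>0$. Choose
\[
C_0\coloneqq \max\Bigl\{1,\ \bigl(\max_{\overline\Omega}\phi+\delta_0/2\bigr)/m\Bigr\}.
\]
Then for $C\geq C_0$ we have $Cw-\phi\geq \delta_0/2$ on $\overline\Omega\setminus U$ as well. Hence $\essinf_\Omega(Cw-\phi)\geq\delta_0/2>0$ and $Cw\in K^\circ$.

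The delicate points are the following. The first is justifying strict positivity of $w$ in the interior, via the strong maximum principle for the $C^{1,\alpha}$ divergence-form operator. If that causes trouble, the fallback is to use a Harnack inequality or the weak Harnack inequality on compact subsets. The second is checking that $C(\overline\Omega)$ regularity lets the boundary inequality extend to a neighborhood. This needs $\phi\in C(\overline\Omega)$, so that the a.e.\ trace inequality is a pointwise inequality of boundary values.

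\textbf{Conclusion.} With $u^0=Cw\in K$ a weak supersolution, \Cref{lem:supersolution-comparison} yields $u^0\geq u^*$ a.e.\ in $\Omega$, completing the proof.
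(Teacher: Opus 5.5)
Your proposal is correct and follows essentially the same route as the paper: the strong maximum principle gives $w>0$, and a boundary neighborhood $U$ where the compatibility condition yields $Cw-\phi\geq\delta_0/2$. Compactness of $\overline\Omega\setminus U$ with $\min w>0$ there fixes $C_0$, and the identity $a(Cw,v)=C\int_\Omega v\,dx\geq 0$ together with \Cref{lem:supersolution-comparison} gives $u^0\geq u^*$. The differences are only cosmetic. You bound $\phi\leq-\delta_0/2$ near $\partial\Omega$, so the estimate there holds for every $C\geq 0$, whereas the paper bounds $w-\phi$ and needs $C\geq1$. You also use $\max_{\overline\Omega}\phi$ in place of $\max_{\Omega_U}(\phi+\delta_0/2)_+$ when defining $C_0$.
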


\begin{proof}
By the strong maximum principle applied to
\eqref{eq:torsion-problem}, we obtain
\[
    w>0 \quad\text{in }\Omega.
\]
Since \(w=0\) on \(\partial\Omega\), we have
\[
    w-\phi > \delta_0
    \qquad\text{on }\partial\Omega.
\]
By continuity, there exists an open neighborhood \(U\) of
\(\partial\Omega\) such that
\[
    w-\phi\geq\frac{\delta_0}{2}
    \qquad\text{in }U\cap\Omega.
\]
Since \(w\geq0\), it follows that, for every \(C\geq1\),
\begin{equation}
\label{eq:estimate-U}
    Cw-\phi
    \geq
    w-\phi
    \geq\frac{\delta_0}{2}
    \qquad\text{in }U\cap\Omega.
\end{equation}

On the compact set
\begin{equation*}
    \Omega_U \coloneqq \overline{\Omega}\setminus U,
\end{equation*}
the strong positivity and continuity of \(w\) imply
\begin{equation*}
    m_U \coloneqq \min_{\Omega_U}w>0.
\end{equation*}
Consequently, choosing
\begin{equation*}
    C_0
    \coloneqq
    \max\left\{
        1,\,
        \frac{\max_{\Omega_U}(\phi+\frac{\delta_0}{2})_+}{m_U}
    \right\} < \infty,
\end{equation*}
we obtain, for every \(C\geq C_0\),
\begin{equation}
\label{eq:estimate-minus-U}
    Cw-\phi\geq\frac{\delta_0}{2}
    \qquad\text{in }\Omega_U.
\end{equation}
Combining \eqref{eq:estimate-U} and \eqref{eq:estimate-minus-U} gives
\begin{equation}
    Cw\geq\phi+\frac{\delta_0}{2}
    \qquad\text{in }\Omega,
\end{equation}
and hence \(u^0=Cw\in K^\circ\).

Moreover, for every nonnegative \(v\in H_0^1(\Omega)\),
\[
    a(u^0,v)
    =
    C\int_\Omega v\,dx
    \geq0.
\]
Thus \(u^0\) is a supersolution, and \Cref{lem:supersolution-comparison} yields
\[
    u^0\geq u^*
    \qquad\text{a.e. in }\Omega.
\]
This finishes the proof.
\end{proof}

\section{Sharpness}
\label{app:weak-sharpness-models}

In this appendix, we prove the sharpness of the sublinear convergence rates for the Shannon and Tsallis entropies
in \Cref{thm:main-sublinear-convergence}. We construct one-parameter families of strictly boundary-compatible
obstacle problems, indexed by $0<\delta\leq1$, whose convergence behavior
approaches that of the boundary-degenerate endpoint as
$\delta\downarrow0$.

The proof proceeds by first analyzing the boundary-degenerate endpoint $\delta=0$, where the proximal iterations admit explicit representations with
the predicted sublinear decay rates. We then show that, for every fixed finite number of iterations, the compatible problems with $\delta>0$ converge to the endpoint iteration as $\delta\downarrow0$.
Monotonicity properties of the proximal map provide the necessary conditions to pass to the limit.

Throughout this appendix, let $\Omega\subset\mathbb R^d$ be a bounded,
connected domain with smooth boundary, and fix $\alpha,\mu>0$.
We use the special case of \eqref{eq:energy-general-A} given by
\begin{equation*}
    A=I,
    \qquad
    F=0,
    \qquad
    g=0,
    \qquad
    \phi_\delta=-\delta,
    \qquad
    0\leq\delta\leq1.
\end{equation*}
Set
\begin{align*}
    & K_\delta
    =
    \{v\in H_0^1(\Omega) \mid
    v\geq-\delta\text{ a.e.\ in }\Omega\}, \\
    & K_\delta^\circ = \{v \in H_0^1(\Omega) \cap L^\infty(\Omega) \mid \essinf(v + \delta) > 0\}.
\end{align*}
The exact obstacle solution is
\begin{equation}
\label{eq:app-exact-zero}
    u_\delta^*=0
    \qquad
    \forall\delta\in[0,1].
\end{equation}
For every $\delta>0$, the strict boundary
compatibility condition in \eqref{eq:boundary-compatibility} holds.
Only the obstacle varies within each model family; in particular, the
initial iterate and the step size are fixed.

For either entropy considered below, write
\begin{equation*}
    \operatorname{Prox}_\delta(\tilde u)
    \coloneqq
    \operatorname*{argmin}_{v\in K_\delta}
    \left\{
        E(v)
        +
        \frac1\alpha
        \int_\Omega
        D_r(v+\delta,\tilde u+\delta)\diff x
    \right\},
\end{equation*}
where $\delta \in (0,1]$ and $\tilde {u} \in K_\delta^\circ$.
By \Cref{thm:entropic-pde}, the subproblem is well-posed and its solution satisfies
\begin{equation}
\label{eq:app-compatible-EL}
    \alpha(\nabla u,\nabla v)
    +
    \bigl(
        r'(u+\delta)-r'(\tilde u+\delta),
        v
    \bigr)
    =0
    \qquad
    \forall v\in H_0^1(\Omega).
\end{equation}
We set
\begin{equation}
\label{eq:app-compatible-iteration}
    u_\delta^0=u^0,
    \qquad
    u_\delta^k
    =
    \operatorname{Prox}_\delta(u_\delta^{k-1}),
    \qquad
    k\geq1.
\end{equation}
The choice of $r$ is understood from the model under consideration.
We use $d(x)=\operatorname{dist}(x,\partial\Omega)$ to denote the distance function to the boundary.

\subsection{Monotonicity properties}

Call $\widetilde u\in H_0^1(\Omega)$ a weak supersolution if
\begin{equation*}
    (\nabla \widetilde u,\nabla v)\geq0
    \qquad
    \forall v\in H_0^1(\Omega),\quad v\geq0.
\end{equation*}

\begin{lemma}[Three comparison properties]
\label{lem:app-prox-comparisons}
The following properties hold:

\begin{enumerate}
\item
If $\widetilde u_1,\widetilde u_2\in K_\delta^\circ$ and
$\widetilde u_1\leq\widetilde u_2$, then
\begin{equation}
\label{eq:app-prox-monotone-b}
    \operatorname{Prox}_\delta(\widetilde u_1)
    \leq
    \operatorname{Prox}_\delta(\widetilde u_2).
\end{equation}

\item
If $\widetilde u \in K_\delta^\circ$ is a weak supersolution and $0<\delta\leq1$, then
\begin{equation}
\label{eq:app-prox-below-b}
    0
    \leq
    \operatorname{Prox}_\delta(\widetilde u)
    \leq
    \widetilde u,
\end{equation}
and $\operatorname{Prox}_\delta(\widetilde u)$ is also a weak
supersolution.

\item
If $\widetilde u \in K_{\delta_1}^\circ \cap K_{\delta_2}^\circ$ is a weak supersolution and
$0<\delta_1\leq\delta_2\leq1$, then
\begin{equation}
\label{eq:app-prox-monotone-delta}
    \operatorname{Prox}_{\delta_1}(\widetilde u)
    \geq
    \operatorname{Prox}_{\delta_2}(\widetilde u).
\end{equation}
\end{enumerate}
\end{lemma}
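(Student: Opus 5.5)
All three properties will be proved with the Stampacchia truncation argument already used for \Cref{prop:main-positive-invariance} and \Cref{lem:uniform-bound-above}. The idea is to test the Euler--Lagrange equation \eqref{eq:app-compatible-EL} with the positive part of a difference of two solutions, and then use that \(r'\) is strictly increasing on \((0,\infty)\).

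\emph{Property (1).} Set \(u_i=\operatorname{Prox}_\delta(\widetilde u_i)\) and \(z=(u_1-u_2)_+\in H_0^1(\Omega)\). This is admissible because both \(u_i\in K_\delta^\circ\) by \Cref{thm:entropic-pde} and have zero trace. Subtracting the two equations \eqref{eq:app-compatible-EL} tested with \(z\) gives
\[
\alpha\|\nabla z\|^2+\bigl(r'(u_1+\delta)-r'(u_2+\delta),z\bigr)=\bigl(r'(\widetilde u_1+\delta)-r'(\widetilde u_2+\delta),z\bigr).
\]
The right-hand side is \(\le0\) since \(\widetilde u_1\le\widetilde u_2\). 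The second term on the left is \(\ge0\) on \(\{z>0\}\). Hence \(\nabla z=0\), so \(z=0\).

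\emph{Property (2).} There are three claims.
\begin{itemize}
\item Lower bound \(u:=\operatorname{Prox}_\delta(\widetilde u)\ge0\). Since \(u^*_\delta=0\), this follows from \Cref{lem:supersolution-comparison} once we know \(\widetilde u\ge0\). That in turn follows from \Cref{lem:supersolution-comparison} applied to \(\widetilde u\in K_\delta\), because \(u_\delta^*=0\). Alternatively, it is exactly \Cref{prop:main-positive-invariance} for a single step with \(u^{k-1}=\widetilde u\ge u^*_\delta\).
\item Upper bound \(u\le\widetilde u\). Test \eqref{eq:app-compatible-EL} with \(z=(u-\widetilde u)_+\ge0\) and add \(\alpha(\nabla\widetilde u,\nabla z)\ge0\) (supersolution property). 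This gives
\[
\alpha(\nabla(u-\widetilde u),\nabla z)+\bigl(r'(u+\delta)-r'(\widetilde u+\delta),z\bigr)\le0.
\]
Both terms are nonnegative, and the first equals \(\alpha\|\nabla z\|^2\). Hence \(z=0\).
\item \(u\) is a supersolution. For \(v\ge0\), \eqref{eq:app-compatible-EL} gives \(\alpha(\nabla u,\nabla v)=(r'(\widetilde u+\delta)-r'(u+\delta),v)\). This is \(\ge0\) because \(u\le\widetilde u\) and \(r'\) is monotone.
\end{itemize}

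\emph{Property (3).} This is the step needing care, since the two solutions \(u_j=\operatorname{Prox}_{\delta_j}(\widetilde u)\) solve equations with different shifts. Set \(z=(u_2-u_1)_+\) and subtract the two equations:
\[
\alpha\|\nabla z\|^2+\bigl(r'(u_2+\delta_2)-r'(u_1+\delta_1),z\bigr)=\bigl(r'(\widetilde u+\delta_2)-r'(\widetilde u+\delta_1),z\bigr).
\]
The right-hand side is nonnegative, so monotonicity alone does not close the argument. We need the finer inequality
\[
r'(a+\delta_2)-r'(b+\delta_1)\ \ge\ r'(\widetilde u+\delta_2)-r'(\widetilde u+\delta_1)\quad\text{whenever } a>b,\ a\le\widetilde u,\ b\le\widetilde u,
\]
which we use with \(a=u_2\), \(b=u_1\) (both \(\le\widetilde u\) by (2)). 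The plan for this inequality is to write \(a=b+t\) with \(t>0\) and use that \(r''\) is decreasing, which holds for both Shannon (\(r''=1/s\)) and Tsallis (\(r''=s^{-q}\)), i.e.\ \(r'\) is concave. Then
\[
r'(b+t+\delta_2)-r'(b+\delta_1)\ge r'(b+\delta_2)-r'(b+\delta_1).
\]
The increment \(x\mapsto r'(x+\delta_2)-r'(x+\delta_1)\) is nonincreasing in \(x\) by concavity, so evaluating at \(x=b\le\widetilde u\) only increases it. This gives the desired inequality. Consequently the integrand on \(\{z>0\}\) satisfies LHS \(\ge\) RHS pointwise, which forces \(\alpha\|\nabla z\|^2\le0\) and hence \(z=0\). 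The main obstacle is exactly this concavity step. I expect it to be the only place where the specific Shannon/Tsallis structure (concavity of \(r'\)) enters.
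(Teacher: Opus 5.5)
Your proposal is correct and follows the paper's proof essentially line by line. You use the same positive-part test functions in all three parts. In part (3), your chain is exactly the paper's $G(u_2,\delta_2)\ge G(u_1,\delta_2)\ge G(u_1,\delta_1)$ with $G(s,\eta)=r'(s+\eta)-r'(\widetilde u+\eta)$: first monotonicity of $r'$, then the increment $x\mapsto r'(x+\delta_2)-r'(x+\delta_1)$ being nonincreasing because $r''$ is decreasing, evaluated at $u_1\le\widetilde u$. Using \Cref{lem:supersolution-comparison} to get $\widetilde u\ge u_\delta^*=0$ before invoking positive invariance fills in a step the paper leaves implicit. In the upper-bound step you mean subtracting, not adding, $\alpha(\nabla\widetilde u,\nabla z)\ge0$.
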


\begin{proof}
For the first property, set
\[
    u_j=\operatorname{Prox}_\delta(\widetilde u_j),
    \qquad
    z=(u_1-u_2)_+ \in H_0^1(\Omega).
\]
Subtracting the two equations in \eqref{eq:app-compatible-EL} and testing
with $z$ gives
\begin{align}
    \alpha\|\nabla z\|_{L^2(\Omega)}^2
    +
    \int_\Omega
    \bigl[
        r'(u_1+\delta)-r'(u_2+\delta)
    \bigr]z\diff x
    =
    \int_\Omega
    \bigl[
        r'(\widetilde u_1+\delta)
        -
        r'(\widetilde u_2+\delta)
    \bigr]z\diff x.
\label{eq:app-comparison-b-proof}
\end{align}
The right-hand side in \eqref{eq:app-comparison-b-proof} is nonpositive because
$\widetilde u_1\leq\widetilde u_2$. On the support of $z$, one has
$u_1>u_2$, so the second term on the left in \eqref{eq:app-comparison-b-proof} is nonnegative. Hence
\[
    \alpha\|\nabla z\|_{L^2(\Omega)}^2\leq0,
\]
which implies $z=0$ almost everywhere and proves \eqref{eq:app-prox-monotone-b}.

For the second property, let
\[
    u=\operatorname{Prox}_\delta(\widetilde u),
    \qquad
    z=(u-\widetilde u)_+ \in H_0^1(\Omega).
\]
Testing \eqref{eq:app-compatible-EL} with $z$ gives
\begin{equation*}
    \alpha(\nabla u,\nabla z)
    =
    -
    \int_\Omega
    \bigl[
        r'(u+\delta)-r'(\widetilde u+\delta)
    \bigr]z\diff x
    \leq0.
\end{equation*}
On the other hand,
\begin{equation}
    (\nabla u,\nabla z)
    =
    (\nabla \widetilde u,\nabla z)
    +
    \|\nabla z\|_{L^2(\Omega)}^2
    \geq
    \|\nabla z\|_{L^2(\Omega)}^2,
\end{equation}
since $z\geq0$ and $\widetilde u$ is a weak supersolution. Thus $z=0$ almost everywhere, and
\[
    u\leq\widetilde u \quad \text{a.e.}
\]
The nonnegativity of $u$ follows directly from \Cref{prop:main-positive-invariance}.
Finally, for every nonnegative $v\in H_0^1(\Omega)$,
\eqref{eq:app-compatible-EL} and $u\leq\widetilde u$ give
\[
    \alpha(\nabla u,\nabla v)
    =
    \int_\Omega
    \bigl[
        r'(\widetilde u+\delta)-r'(u+\delta)
    \bigr]v\diff x
    \geq0.
\]
Thus $u$ is also a weak supersolution.

For the third property, set
\[
    u_j=\operatorname{Prox}_{\delta_j}(\widetilde u),
    \qquad
    z=(u_2-u_1)_+.
\]
By \eqref{eq:app-prox-below-b}, we have
\begin{equation}
    0\leq u_j\leq\widetilde u.
\end{equation}
For fixed $t$, define
\begin{equation}
    G(s,\eta)
    =
    r'(s+\eta)-r'(t+\eta).
\end{equation}
For $0\leq s\leq t$, since $r$ is convex, we have
\[
    \partial_sG(s,\eta)
    =
    r''(s+\eta) \geq 0,
\]
while
\[
    \partial_\eta G(s,\eta)
    =
    r''(s + \eta)
    -
    r''(t + \eta)
    \geq0,
\]
because $r''$ is decreasing for the Shannon and Tsallis entropies.

Subtracting \eqref{eq:app-compatible-EL} for $u_1$ and $u_2$ and testing with $z$
yields
\begin{equation}
\label{eq:app-delta-comparison-proof}
    \alpha\|\nabla z\|_{L^2(\Omega)}^2
    +
    \int_\Omega
    \bigl[
        G(u_2,\delta_2)
        -
        G(u_1,\delta_1)
    \bigr]z\diff x
    =0.
\end{equation}
On the support of $z$,
\[
    u_2>u_1,
    \qquad
    \delta_2\geq\delta_1,
\]
and therefore
\[
    G(u_2,\delta_2)
    \geq
    G(u_1,\delta_2)
    \geq
    G(u_1,\delta_1).
\]
The second term in \eqref{eq:app-delta-comparison-proof} is nonnegative, so
$z=0$. This proves \eqref{eq:app-prox-monotone-delta}.
\end{proof}

\begin{theorem}[Double monotonicity]
\label{thm:app-double-monotonicity}
Let $u^0 \in \displaystyle\bigcap_{\delta\in (0,1]} K_\delta^\circ$ be a weak
supersolution. Then
\begin{equation}
\label{eq:app-k-monotonicity}
    0
    \leq
    u_\delta^{k+1}
    \leq
    u_\delta^k
    \leq
    u^0
    \qquad
    \forall k\geq0,\quad 0<\delta\leq1.
\end{equation}
Moreover, if
\[
    0<\delta_1\leq\delta_2\leq1,
\]
then
\begin{equation}
\label{eq:app-delta-monotonicity}
    u_{\delta_1}^k
    \geq
    u_{\delta_2}^k
    \qquad
    \forall k\geq0.
\end{equation}
Thus the iterates are decreasing in $k$ and increasing as
$\delta\downarrow0$.
\end{theorem}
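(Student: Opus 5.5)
Both claims follow by induction on $k$ from \Cref{lem:app-prox-comparisons}; the work lies in keeping the hypotheses of that lemma alive along the iteration. The invariant I will carry is: for every $\delta\in(0,1]$ and every $k\geq0$, the iterate $u_\delta^k$ is a weak supersolution belonging to $K_\delta^\circ$, and $0\leq u_\delta^k\leq u^0$. For $k=0$ this is the hypothesis on $u^0$; note that $u^0\in K_\delta^\circ$ forces $u^0\geq0$, since $u^0\geq-\delta$ for all $\delta$. For the inductive step, \Cref{thm:entropic-pde} gives $u_\delta^{k+1}=\operatorname{Prox}_\delta(u_\delta^k)\in K_\delta^\circ$. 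Property (2) of \Cref{lem:app-prox-comparisons}, applied with $\widetilde u=u_\delta^k$, then gives $0\leq u_\delta^{k+1}\leq u_\delta^k$, and it also shows that $u_\delta^{k+1}$ is again a weak supersolution. Chaining these inequalities yields \eqref{eq:app-k-monotonicity}.

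For \eqref{eq:app-delta-monotonicity} I fix $0<\delta_1\leq\delta_2\leq1$ and induct on $k$; the case $k=0$ is trivial since both iterates equal $u^0$. Suppose $u_{\delta_1}^k\geq u_{\delta_2}^k$. I split the comparison of the next iterates through an intermediate term:
\begin{equation*}
    u_{\delta_1}^{k+1}
    =\operatorname{Prox}_{\delta_1}(u_{\delta_1}^k)
    \geq\operatorname{Prox}_{\delta_1}(u_{\delta_2}^k)
    \geq\operatorname{Prox}_{\delta_2}(u_{\delta_2}^k)
    =u_{\delta_2}^{k+1}.
\end{equation*}
The first inequality is property (1) at the fixed obstacle $\delta_1$, with $\widetilde u_1=u_{\delta_2}^k$ and $\widetilde u_2=u_{\delta_1}^k$. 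The second is property (3) applied to the weak supersolution $\widetilde u=u_{\delta_2}^k$.

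The main obstacle is checking the membership hypotheses these two steps need. Both require $u_{\delta_2}^k\in K_{\delta_1}^\circ$, while a priori we only know $u_{\delta_2}^k\in K_{\delta_2}^\circ$. The invariant resolves this. Since $u_{\delta_2}^k\geq0$, we get $\essinf(u_{\delta_2}^k+\delta_1)\geq\delta_1>0$. Since $u_{\delta_2}^k\leq u^0\in L^\infty$, the iterate is bounded. Hence $u_{\delta_2}^k\in K_{\delta_1}^\circ\cap K_{\delta_2}^\circ$, as property (3) requires, and it lies in $K_{\delta_1}^\circ$, as property (1) requires.

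The same reasoning covers the nonnegativity assertion inside property (2). That assertion rests on \Cref{prop:main-positive-invariance} with $u^*=0$, which needs $u_\delta^k\geq u_\delta^*=0$; this is exactly what the invariant supplies. Property (3) itself relies on $r''$ being decreasing, which holds for the Shannon and Tsallis entropies treated in this appendix, so the statement is to be read for those two choices of $r$.
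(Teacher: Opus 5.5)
Your proof is correct and follows the paper's argument. Part one is the same induction via property (2), and the $\delta$-monotonicity is the same two-step chain through properties (1) and (3), except that you pass through $\operatorname{Prox}_{\delta_1}(u_{\delta_2}^k)$ where the paper (in your indexing) passes through $\operatorname{Prox}_{\delta_2}(u_{\delta_1}^k)$. The paper's choice needs only the trivial inclusion $K_{\delta_1}^\circ\subseteq K_{\delta_2}^\circ$, whereas yours needs $u_{\delta_2}^k\geq 0$ to place $u_{\delta_2}^k$ in $K_{\delta_1}^\circ$, which your invariant correctly supplies.
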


\begin{proof}
By \eqref{eq:app-prox-monotone-b} and \eqref{eq:app-prox-below-b} of \Cref{lem:app-prox-comparisons}, we inductively obtain
\[
    0\leq u_\delta^k\leq u_\delta^{k-1}
\]
and every $u_\delta^k$ is a weak supersolution. This proves \eqref{eq:app-k-monotonicity}.

To prove monotonicity in $\delta$, we argue by induction in $k$. At
$k=0$, the iterates agree. Suppose that
\[
    u_{\delta_1}^{k-1}
    \geq
    u_{\delta_2}^{k-1}.
\]
Then \eqref{eq:app-prox-monotone-b} and \eqref{eq:app-prox-monotone-delta} of
\Cref{lem:app-prox-comparisons} give
\begin{align*}
    u_{\delta_2}^k
    &=
    \operatorname{Prox}_{\delta_2}
        (u_{\delta_2}^{k-1})
    \leq
    \operatorname{Prox}_{\delta_2}
        (u_{\delta_1}^{k-1})
    \leq
    \operatorname{Prox}_{\delta_1}
        (u_{\delta_1}^{k-1})
    =
    u_{\delta_1}^k.
\end{align*}
This proves \eqref{eq:app-delta-monotonicity}.
\end{proof}

\subsection{The Shannon entropy}

In this section, we take $r = r_{\mathrm{sh}}$. Let $w$ solve
\begin{equation}
\label{eq:app-shannon-profile}
    -\Delta w=\mu
    \quad\text{in }\Omega,
    \qquad
    w=0
    \quad\text{on }\partial\Omega.
\end{equation}
Standard elliptic PDE theory \cite[Theorem~3.5 and Theorem~6.14]{gilbarg2001elliptic} gives
\begin{equation}
\label{eq:app-shannon-profile-bound}
    w\in C^1(\overline\Omega)\cap H_0^1(\Omega),
    \qquad
    w > 0 \quad \text{in} \; \Omega.
\end{equation}
Nevertheless, the boundary singularity of $\log w$ is mild enough that
\begin{equation}
    \log w\in L^p(\Omega) \quad \forall 1 \leq p < \infty,
\end{equation}
see, for example, \cite[Lemma~3.4 and Theorem~6.14]{gilbarg2001elliptic}.
For the remainder of this subsection, we choose the initial iterate $u^0=c_0w$ for some constant $c_0 > 0$, which means $u^0_\delta = c_0w$ for all $\delta \in [0,1]$.

\begin{proposition}[Boundary-degenerate Shannon model]
\label{thm:app-shannon-model}
There exists a unique positive sequence $\{c_k\}_{k\geq0}$, with the
prescribed $c_0>0$, satisfying
\begin{equation}
\label{eq:app-shannon-recurrence}
    \alpha\mu c_k
    +
    \log c_k
    -
    \log c_{k-1}
    =0,
    \qquad
    k\geq1.
\end{equation}
The functions
\begin{equation*}
    u_0^k=c_kw
\end{equation*}
satisfy the endpoint equation, which is \eqref{eq:app-compatible-EL} with $\delta = 0$,
\begin{equation}
\label{eq:app-shannon-endpoint-EL}
    \alpha(\nabla u_0^k,\nabla v)
    +
    \bigl(
        \log u_0^k-\log u_0^{k-1},
        v
    \bigr)
    =0
    \qquad
    \forall v\in H_0^1(\Omega).
\end{equation}
Moreover, it holds that
\begin{align}
    \label{eq:app-shannon-bregman-c-k}
    & c_k\sim(\alpha\mu k)^{-1}, \\
    \label{eq:app-shannon-asymptotic}
    & k\|u_0^k\|_{H^1(\Omega)}
    \to
    L_{\rm sh}
    \coloneqq
    \frac{\|w\|_{H^1(\Omega)}}{\alpha\mu}
    >0,\\
    \label{eq:app-shannon-bregman-asymptotic}
    & k
    \int_\Omega
    D_r(u_0^*,u_0^k)
    \diff x
    \to
    L_{\rm sh}^{D}
    \coloneqq
    \frac{1}{\alpha\mu}
    \int_\Omega w\diff x
    >0.
\end{align}
\end{proposition}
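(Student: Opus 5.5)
The plan is to reduce everything to the scalar recurrence \eqref{eq:app-shannon-recurrence}, using the ansatz $u_0^k=c_kw$.

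\textbf{Existence and uniqueness of $c_k$.} For fixed $c_{k-1}>0$, consider the map $h(c)=\alpha\mu c+\log c-\log c_{k-1}$ on $(0,\infty)$. It is continuous and strictly increasing, with $h(c)\to-\infty$ as $c\downarrow0$ and $h(c)\to+\infty$ as $c\to\infty$. Hence there is a unique root $c_k>0$, and induction gives the whole positive sequence. Since $\log c_k-\log c_{k-1}=-\alpha\mu c_k<0$, the sequence is strictly decreasing and bounded below by $0$. So $c_k\to c_\infty\ge0$. Passing to the limit in the recurrence forces $\alpha\mu c_\infty=0$, so $c_\infty=0$.

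\textbf{Endpoint equation.} Because $w>0$ in $\Omega$, we have $\log u_0^k-\log u_0^{k-1}=\log c_k-\log c_{k-1}$, a constant. This is where the $w$-dependence cancels. Testing \eqref{eq:app-shannon-profile} gives $\alpha(\nabla u_0^k,\nabla v)=\alpha c_k\mu(1,v)$. Adding $(\log c_k-\log c_{k-1},v)$ yields $(\alpha\mu c_k+\log c_k-\log c_{k-1},v)=0$ by the recurrence, which is \eqref{eq:app-shannon-endpoint-EL}. The function $\log w\in L^p$ is only needed to make sense of each log term separately; their difference is constant anyway.

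\textbf{Asymptotics.} This is the main technical step. Set $x_k=c_k$ and write $c_{k-1}=c_ke^{\alpha\mu c_k}$. Then
\[
\frac1{c_k}-\frac1{c_{k-1}}=\frac{1-e^{-\alpha\mu c_k}}{c_k}\to\alpha\mu,
\]
since $c_k\to0$. By Stolz--Ces\`aro (or by averaging the telescoping sum), $1/(kc_k)\to\alpha\mu$, which gives \eqref{eq:app-shannon-bregman-c-k}. It follows that $k\|u_0^k\|_{H^1}=kc_k\|w\|_{H^1}\to\|w\|_{H^1}/(\alpha\mu)$, which is \eqref{eq:app-shannon-asymptotic}.

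For the Bregman divergence, $u_0^*=0$ and $\phi_0=0$, so
\[
D_r(0,b)=r(0)-r(b)+r'(b)\,b=0-(b\log b-b)+b\log b=b.
\]
Hence $\int_\Omega D_r(u_0^*,u_0^k)\diff x=c_k\int_\Omega w\diff x$. Multiplying by $k$ and using $kc_k\to1/(\alpha\mu)$ gives \eqref{eq:app-shannon-bregman-asymptotic}. Both limits are positive because $w>0$.

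The only delicate point is the Stolz step. It needs $c_k\to0$, which was already secured in the first step.
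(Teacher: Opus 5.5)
Your proposal is correct and follows the paper's proof essentially step for step. Uniqueness comes from strict monotonicity of $c\mapsto\alpha\mu c+\log c-\log c_{k-1}$, and the endpoint equation follows from $(\nabla w,\nabla v)=\mu(1,v)$ together with the fact that $\log(c_kw)-\log(c_{k-1}w)$ is constant. The asymptotics follow from $c_k^{-1}-c_{k-1}^{-1}=(1-e^{-\alpha\mu c_k})/c_k\to\alpha\mu$ and Stolz--Ces\`aro. Your explicit computation $D_r(0,b)=b$, which gives $\int_\Omega D_r(u_0^*,u_0^k)\,\mathrm{d}x=c_k\int_\Omega w\,\mathrm{d}x$, fills in what the paper says follows immediately from $u_0^*=0$.
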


\begin{proof}
For fixed $c_{k-1}>0$, the function $g(c) = \alpha\mu c+\log c-\log c_{k-1}$
is strictly increasing. Hence
\eqref{eq:app-shannon-recurrence} determines a unique $c_k>0$.

Using \eqref{eq:app-shannon-profile}, we have
\begin{equation}
    (\nabla w,\nabla v)
    =
    \mu\int_\Omega v\diff x.
\end{equation}
Thus, for every $v\in H_0^1(\Omega)$,
\begin{align*}
&\alpha(\nabla(c_kw),\nabla v)
+
\bigl(
    \log(c_kw)-\log(c_{k-1}w),
    v
\bigr)
\\
&\qquad
=
\left[
    \alpha\mu c_k
    +
    \log c_k-\log c_{k-1}
\right]
\int_\Omega v\diff x
=
0.
\end{align*}
This proves \eqref{eq:app-shannon-endpoint-EL}.

The recurrence \eqref{eq:app-shannon-recurrence} is equivalent to $c_{k-1}
    =
    c_ke^{\alpha\mu c_k}$.
Hence
\[
    0<c_k<c_{k-1} \quad \text{and} \quad c_k \to 0.
\]
Furthermore,
\begin{align*}
\frac1{c_k}-\frac1{c_{k-1}}=
\frac1{c_k}
\left(
    1-e^{-\alpha\mu c_k}
\right)
\to
\alpha\mu.
\end{align*}
Therefore, by the Stolz--Ces\`aro theorem \cite[Chapter 3, Theorem 1.22]{muresan2009concrete}, we obtain
\[
    \frac{c_k^{-1}}{k} \to
    \alpha\mu.
\]
This proves
\eqref{eq:app-shannon-bregman-c-k}. Since $u_0^* = 0$, \eqref{eq:app-shannon-asymptotic} and \eqref{eq:app-shannon-bregman-asymptotic} follow immediately.
\end{proof}

\begin{remark}[Variational interpretation of the endpoint iterate]
    Although the boundary-degenerate case $\delta=0$ falls outside the
    hypotheses of \Cref{thm:entropic-pde}, the variational interpretation of
    the proximal step remains valid. In particular, for every $k\geq1$,
    the functions $\{u_0^k\}$ constructed in
    \Cref{thm:app-shannon-model} is the unique minimizer of
    \begin{equation*}
        v\mapsto
        E(v)
        +
        \frac1\alpha
        \int_\Omega
        D_{r}(v,u_0^{k-1})
        \diff x \quad \text{over } K_0.
    \end{equation*}
    Indeed, the endpoint equation
    \eqref{eq:app-shannon-endpoint-EL} is precisely the first-order optimality
    condition, while strict convexity gives uniqueness.
\end{remark}

\begin{lemma}[Convergence of the Shannon entropy variables]
\label{lem:app-shannon-entropy-limit}
For every fixed $k\geq0$, the pointwise limit
\begin{equation}
\label{eq:app-shannon-pointwise-limit}
    \bar u^k
    \coloneqq \lim_{\delta\downarrow0} u_\delta^k
\end{equation}
exists a.e. in $\Omega$. There is also $a_k>0$, independent of
$0<\delta\leq1$, such that
\begin{equation}
\label{eq:app-uniform-boundary-barrier}
    a_kd
    \leq
    u_\delta^k
    \leq
    u^0,
    \qquad
    a_kd
    \leq
    \bar u^k
    \leq
    u^0.
\end{equation}
Consequently,
\begin{equation}
\label{eq:app-shannon-entropy-limit}
    \log(u_\delta^k+\delta)
    \to
    \log\bar u^k
    \qquad
    \text{in }L^2(\Omega).
\end{equation}
\end{lemma}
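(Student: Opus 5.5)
Existence of the pointwise limit comes from the double monotonicity in \Cref{thm:app-double-monotonicity}. For fixed $k$, the map $\delta\mapsto u_\delta^k$ is nonincreasing in $\delta$, so it increases as $\delta\downarrow0$. It is also bounded above by $u^0=c_0w$. Hence $\bar u^k=\lim_{\delta\downarrow0}u_\delta^k=\sup_{\delta}u_\delta^k$ exists a.e. (one can take a countable sequence $\delta_n\downarrow0$ and use monotonicity to handle all $\delta$). Passing to the limit in $u_\delta^k\le u^0$ gives $\bar u^k\le u^0$. The same monotonicity shows $u_\delta^k\ge u_1^k$ for all $\delta\in(0,1]$. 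Therefore the lower barrier $a_kd\le u_\delta^k$ reduces to the single estimate $u_1^k\ge a_kd$ for the fixed problem with $\delta=1$, and the bound for $\bar u^k$ then follows by passing to the limit.

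To prove $u_1^k\ge a_k d$, I would argue by induction on $k$. The aim is to show that each iterate is a weak supersolution that is strictly positive in the interior and is bounded below by a multiple of a superharmonic profile vanishing linearly at $\partial\Omega$. By \Cref{lem:app-prox-comparisons}(2), $u_1^k$ is a weak supersolution with $u_1^k\ge0$. I then expect a quantitative lower bound of the form $-\Delta u_1^k=\alpha^{-1}\bigl(\log(u_1^{k-1}+1)-\log(u_1^k+1)\bigr)\ge0$. The right-hand side is nonnegative, and it is not identically zero as long as $u_1^k\ne u_1^{k-1}$. Then $u_1^k\ge c\,G[\text{rhs}]$, where $G$ denotes the Dirichlet Green operator. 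The Hopf lemma gives $G[h]\ge c_h d$ for any nonnegative $h\in L^\infty$ that is not identically zero; equivalently, one can use the standard estimate $G[h](x)\ge c\,d(x)\int h\,d\,dy$ for smooth bounded connected domains. To make the right-hand side positive on a fixed ball, I would use the contraction $u_1^k\le u_1^{k-1}$ together with a strict version. If the two iterates coincided on an open set $B$, then $u_1^k$ would be harmonic on $B$; the equation with $\delta=1$ then forces $u_1^k=u_1^{k-1}$ globally, which contradicts $u_1^k\to0$ unless both vanish, and that is impossible since $u^0>0$. A cleaner alternative is a direct barrier: I would check that $\underline v=a\,w$ with $a$ small is a subsolution of the $\delta=1$ Euler--Lagrange equation. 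Testing the difference with $(\underline v-u_1^k)_+$, the requirement is $\alpha a\mu\le\log(u_1^{k-1}+1)-\log(aw+1)$ wherever $aw\ge u_1^k$. By the induction hypothesis $u_1^{k-1}\ge a_{k-1}w$, so it suffices that $\alpha a\mu\le\log(1+a_{k-1}w)-\log(1+aw)$. This is not uniform near the boundary, where both sides vanish, and making it work there is the main obstacle. I would fix it by comparing slopes: near $\partial\Omega$ the right-hand side behaves like $(a_{k-1}-a)w$, which still tends to zero, so a constant-coefficient comparison is not enough. I would therefore replace $w$ by a barrier $w_\epsilon$ solving $-\Delta w_\epsilon=\mathbf 1_{\Omega'}$, with $\Omega'\Subset\Omega$, so that the source vanishes near the boundary and only interior positivity is needed. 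Hopf's lemma applied to $w_\epsilon$ then gives $w_\epsilon\gtrsim d$, which yields $a_k$.

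Finally, the $L^2$ convergence \eqref{eq:app-shannon-entropy-limit} follows by dominated convergence. Pointwise, $\log(u_\delta^k+\delta)\to\log\bar u^k$ a.e., since $\bar u^k\ge a_kd>0$ a.e. For the dominating function, note $a_kd\le u_\delta^k+\delta\le \|u^0\|_\infty+1$, so $|\log(u_\delta^k+\delta)|\le|\log(a_kd)|+C$. This bound lies in $L^2(\Omega)$, because $\log d\in L^p$ for every $p<\infty$ on a smooth bounded domain; this is the same fact used for $\log w$.
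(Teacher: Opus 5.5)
Your main line is the paper's argument. Monotonicity in $\delta$ gives the limit and the reduction $u_\delta^k\ge u_1^k$. The barrier comes from viewing $u_1^k$ as the $H_0^1$ solution of $-\Delta u_1^k=f_k$, where $f_k=\alpha^{-1}\bigl(\log(1+u_1^{k-1})-\log(1+u_1^k)\bigr)\in[0,\alpha^{-1}\log(1+\|u^0\|_{L^\infty(\Omega)})]$, followed by the strong maximum principle and the Hopf lemma. The $L^2$ convergence is dominated convergence with $|\log d|\in L^2(\Omega)$.

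The one step that is not justified as written is the nontriviality of $f_k$. Nothing in the equation for $u_1^k$ lets you pass from coincidence of $u_1^k$ and $u_1^{k-1}$ on a ball $B$ to global coincidence. The appeal to $u_1^k\to0$ is beside the point. Positivity on a fixed ball is also more than you need: both the Hopf argument and your bound $G(x,y)\ge c\,d(x)d(y)$ only require $f_k\not\equiv0$. That follows globally in one line, exactly as in the paper. If $f_k=0$ a.e., then $u_1^k\in H_0^1(\Omega)$ is harmonic, so $u_1^k=0$. Hence $\log(1+u_1^{k-1})=0$, i.e.\ $u_1^{k-1}=0$, contradicting your induction hypothesis (or, iterating backwards, $u^0=c_0w>0$). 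The local statement you were after is in fact true, but by backward induction rather than global coincidence. If $u_1^k=u_1^{k-1}$ on $B$, then $\Delta u_1^{k-1}=\Delta u_1^k=0$ on $B$, so $f_{k-1}=0$ and $u_1^{k-2}=u_1^{k-1}$ on $B$. Continuing down gives $\Delta u^0=0$ on $B$, contradicting $-\Delta u^0=c_0\mu>0$.

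Your barrier alternative is a genuinely different and correct route. Take $-\Delta w_\epsilon=\mathbf 1_{\Omega'}$ with $c_\epsilon d\le w_\epsilon\le C_\epsilon d$, and the induction hypothesis $u_1^{k-1}\ge a_{k-1}d$.
\begin{itemize}
\item Choosing $a\le a_{k-1}/C_\epsilon$ gives $aw_\epsilon\le u_1^{k-1}$ off $\Omega'$.
\item Choosing $a$ small also gives $\alpha a+\log(1+a\|w_\epsilon\|_{L^\infty(\Omega)})\le\log\bigl(1+a_{k-1}\dist(\Omega',\partial\Omega)\bigr)\le\log(1+u_1^{k-1})$ on $\Omega'$.
\item Testing the $\delta=1$ equation with $(aw_\epsilon-u_1^k)_+$ then yields $u_1^k\ge aw_\epsilon\ge c_\epsilon a\,d$.
\end{itemize}
This replaces the qualitative fact $f_k\not\equiv0$ by a quantitative induction. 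It also applies Hopf only to $w_\epsilon$, which is harmonic near $\partial\Omega$ and hence smooth up to the boundary, so the classical lemma applies verbatim. The paper's route is shorter and needs no auxiliary profile. In either version the base case $u^0=c_0w\ge c\,d$ is Hopf for $w$.
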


\begin{proof}
The existence of the limit in
\eqref{eq:app-shannon-pointwise-limit} follows directly from the monotone convergence theorem and \eqref{eq:app-delta-monotonicity}.

For $k=0$, the lower bound in
\eqref{eq:app-uniform-boundary-barrier} follows from
\eqref{eq:app-shannon-profile-bound}. Let $k\geq1$. Since
$0<\delta\leq1$, \eqref{eq:app-delta-monotonicity} gives
\begin{equation}
\label{eq:app-u-delta-above-u1}
    u_\delta^k\geq u_1^k.
\end{equation}
By \eqref{eq:app-compatible-EL},
\begin{equation}
\label{eq:app-shannon-u1-weak}
    (\nabla u_1^k,\nabla v)
    =
    (f_k,v)
    \qquad
    \forall v\in H_0^1(\Omega),
\end{equation}
where
\begin{equation*}
    f_k
    \coloneqq
    \frac1\alpha
    \left[
        \log(1+u_1^{k-1})
        -
        \log(1+u_1^k)
    \right].
\end{equation*}
From \eqref{eq:app-k-monotonicity}, we have $0 \leq u_1^k \leq u_1^{k-1} \leq u^0$. Hence
\begin{equation*}
    0
    \leq f_k
    \leq \frac{1}{\alpha} \log\!\left(1+\|u^0\|_{L^\infty(\Omega)}\right),
\end{equation*}
so $f_k\in L^\infty(\Omega)$.

Moreover, $f_k\not\equiv0$. Otherwise
\eqref{eq:app-shannon-u1-weak} would give $u_1^k=0$, which implies $u_1^{k-1}=0$. Thus, $u^0_1 = u^0=0$, which gives a
contradiction.

Hence, by global elliptic regularity \cite[Theorem~9.15]{gilbarg2001elliptic}, we obtain
\[
    u_1^k\in W^{2,p}(\Omega)
    \qquad
    \forall 1<p<\infty,
\]
and therefore
\begin{equation}
\label{eq:app-shannon-u1-strong}
    -\Delta u_1^k=f_k
    \qquad\text{a.e. in }\Omega.
\end{equation}
Taking $p>d$ gives
\[
    u_1^k\in C^{1,\gamma}(\overline\Omega)
\]
for some $\gamma>0$. The strong maximum principle \cite[Theorem 8.19]{gilbarg2001elliptic} and the Hopf boundary
point lemma \cite[Lemma 3.4]{gilbarg2001elliptic} applied to
\eqref{eq:app-shannon-u1-strong} yield
\[
    u_1^k>0
    \quad\text{in }\Omega,
    \qquad
    -\partial_\nu u_1^k>0
    \quad\text{on }\partial\Omega.
\]
Since $\partial\Omega$ is compact and $\nabla u_1^k$ is continuous, we obtain
\[
    u_1^k(x)\geq a_kd(x)
    \qquad
    x\in\Omega
\]
for some $a_k>0$. Together with
\eqref{eq:app-u-delta-above-u1}, this proves
\eqref{eq:app-uniform-boundary-barrier}.

Finally,
\[
    a_kd
    \leq
    u_\delta^k+\delta
    \leq
    \|u^0\|_{L^\infty(\Omega)}+1.
\]
Therefore
\begin{equation*}
    |\log(u_\delta^k+\delta)|
    + |\log\bar u^k|
    \leq C_k (1+|\log d|).
\end{equation*}
Since $|\log d|\in L^2(\Omega)$ and
\[
    \log(u_\delta^k+\delta)
    \to
    \log\bar u^k
    \qquad
    \text{a.e.},
\]
the dominated convergence theorem proves
\eqref{eq:app-shannon-entropy-limit}.
\end{proof}

\begin{lemma}[$H^1$ and Bregman stability for Shannon entropy]
\label{lem:app-shannon-finite-horizon}
The limit $\bar u^k$ defined in
\Cref{lem:app-shannon-entropy-limit} satisfies
$\bar u^k=u_0^k$ for every $k\geq0$. Moreover,
for every fixed integer $K\geq0$,
\begin{equation}
\label{eq:app-shannon-finite-horizon}
    \max_{0\leq k\leq K}
    \|u_\delta^k-u_0^k\|_{H^1(\Omega)}
    \to 0
    \qquad
    \text{as }\delta\downarrow0,
\end{equation}
and
\begin{equation}
\label{eq:app-shannon-finite-horizon-bregman}
    \max_{0\leq k\leq K}
    \left|
    \int_\Omega
    D_r(u_\delta^*+\delta,u_\delta^k+\delta)
    \diff x
    -
    \int_\Omega
    D_r(u_0^*,u_0^k)
    \diff x
    \right|
    \to 0
    \qquad
    \text{as }\delta\downarrow0.
\end{equation}
\end{lemma}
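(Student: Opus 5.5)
We argue by induction on $k$, proving simultaneously that $\bar u^k=u_0^k$ and that $u_\delta^k\to u_0^k$ strongly in $H^1(\Omega)$. Since $K$ is finite, the maximum over $0\le k\le K$ then follows. The base case is trivial because $u_\delta^0=u^0=u_0^0=c_0w$.

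For the inductive step, assume $\bar u^{k-1}=u_0^{k-1}$. \emph{First}, we obtain a uniform $H^1$ bound. Testing \eqref{eq:app-compatible-EL} with $v=u_\delta^k$ gives
\[
\alpha\|\nabla u_\delta^k\|^2 \le \bigl(\|\log(u_\delta^k+\delta)\|_{L^2}+\|\log(u_\delta^{k-1}+\delta)\|_{L^2}\bigr)\|u_\delta^k\|_{L^2}.
\]
By \Cref{lem:app-shannon-entropy-limit}, the logarithms are bounded in $L^2$ uniformly in $\delta$, so $u_\delta^k$ is bounded in $H_0^1$. Hence along every sequence $\delta\downarrow0$ there is a weakly convergent subsequence. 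Its limit must coincide with the a.e.\ monotone limit $\bar u^k$, because strong $L^2$ convergence follows from Rellich, or from dominated convergence using $0\le u_\delta^k\le u^0$. Therefore $u_\delta^k\rightharpoonup\bar u^k$ in $H_0^1$ along the whole family.

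\emph{Second}, we pass to the limit in \eqref{eq:app-compatible-EL}. By \eqref{eq:app-shannon-entropy-limit} applied at $k$ and $k-1$, together with the inductive hypothesis, $\bar u^k$ satisfies
\[
\alpha(\nabla\bar u^k,\nabla v)+(\log\bar u^k-\log u_0^{k-1},v)=0 \quad\text{for all } v\in H_0^1.
\]
This is \eqref{eq:app-shannon-endpoint-EL} with $\bar u^k$ in place of $u_0^k$.

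\emph{Third}, we prove uniqueness for this endpoint equation, which is the step I expect to be the main obstacle. Both $\bar u^k$ and $u_0^k=c_kw$ solve it, and both have $\log(\cdot)\in L^2$. We subtract the two equations and test with $z=\bar u^k-u_0^k\in H_0^1$. This is admissible because $\log\bar u^k-\log u_0^k\in L^2$ and $z\in L^2$, so no weighted integrability issue arises. The result is
\[
\alpha\|\nabla z\|^2+\bigl(\log\bar u^k-\log u_0^k,\ \bar u^k-u_0^k\bigr)=0.
\]
Both terms are nonnegative by monotonicity of $\log$, so $z=0$. The needed ingredient is the barrier $\bar u^k\ge a_kd$, which puts $\log\bar u^k$ in $L^2$. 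This is exactly why \Cref{lem:app-shannon-entropy-limit} was proven first.

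\emph{Fourth}, we upgrade weak convergence to strong convergence. We test \eqref{eq:app-compatible-EL} with $u_\delta^k$, which gives
\[
\alpha\|\nabla u_\delta^k\|^2=\bigl(\log(u_\delta^{k-1}+\delta)-\log(u_\delta^k+\delta),\,u_\delta^k\bigr).
\]
The logarithms converge in $L^2$ and $u_\delta^k\to u_0^k$ in $L^2$, so the right-hand side converges to
\[
(\log u_0^{k-1}-\log u_0^k,\,u_0^k)=\alpha\|\nabla u_0^k\|^2.
\]
Convergence of norms plus weak convergence yields strong $H^1$ convergence, which completes the induction and proves \eqref{eq:app-shannon-finite-horizon}.

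For \eqref{eq:app-shannon-finite-horizon-bregman}, we use $u_\delta^*=0$. Writing $s=u_\delta^k+\delta$, the integrand is
\[
D_r(\delta,s)=\delta\log\delta-\delta-s\log s+s-\log s\,(\delta-s)=\delta\log\delta-\delta+s-\delta\log s.
\]
As $\delta\downarrow0$, the terms $\delta\log\delta$ and $\delta$ tend to zero, and $\delta\|\log s\|_{L^1}\to0$ by the uniform $L^2$ bound on the logarithms. Moreover, $\int s=\int(u_\delta^k+\delta)\to\int u_0^k=\int D_r(0,u_0^k)$. This gives convergence for each $k\le K$, and taking the finite maximum concludes the proof.
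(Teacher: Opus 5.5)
Your proof is correct and follows the same route as the paper. The steps match: a uniform $H^1$ bound, weak compactness with the limit identified as the monotone limit $\bar u^k$, passage to the limit in the compatible Euler--Lagrange equation, uniqueness of the endpoint equation by monotonicity of $\log$, an upgrade to strong $H^1$ convergence, and a termwise limit in the Bregman integral. The differences are only tactical: the paper gets the $H^1$ bound from energy dissipation, gets strong convergence by testing the error equation with $e_\delta^k=u_\delta^k-u_0^k$ rather than by norm-plus-weak convergence, and passes to the limit in the unsimplified representation $|\Omega|r(\delta)-\int_\Omega r(u_\delta^k+\delta)\,dx+(\log(u_\delta^k+\delta),u_\delta^k)$; your reduction of the integrand to $\delta\log\delta-\delta+s-\delta\log s$ needs only a uniform $L^2$ bound on the logarithms rather than their $L^2$ convergence.
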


\begin{proof}
By the energy dissipation law in \Cref{lem:energy-dissipation}, we obtain
\begin{equation}
\label{eq:app-energy-bound}
E(u_\delta^k)\le E(u_\delta^{k-1})
\le \cdots \le E(u^0),
\end{equation}

For each fixed $k$, the monotone convergence in
\Cref{lem:app-shannon-entropy-limit} and the bound
$0\leq u_\delta^k\leq u^0$ imply
\begin{equation*}
    u_\delta^k
    \to \bar{u}^k
    \qquad \text{in } L^2(\Omega).
\end{equation*}
The energy bound in \eqref{eq:app-energy-bound} gives
\begin{equation}
\label{eq:app-shannon-weak-H1}
    u_\delta^k
    \rightharpoonup
    \bar u^k
    \qquad
    \text{in }H^1_0(\Omega).
\end{equation}

We then prove the identity $\bar u^k=u_0^k$ by induction. The identity is immediate for
$k=0$. Suppose $\bar u^{k-1}=u_0^{k-1}$.
Passing to the limit in \eqref{eq:app-compatible-EL} using
\eqref{eq:app-shannon-entropy-limit} and \eqref{eq:app-shannon-weak-H1} gives
\begin{equation}
\label{eq:app-shannon-limit-equation}
    \alpha(\nabla\bar u^k,\nabla v)
    +
    \bigl(
        \log\bar u^k-\log u_0^{k-1},
        v
    \bigr)
    =0
    \qquad
    \forall v\in H_0^1(\Omega).
\end{equation}
On the other hand, $u_0^k=c_kw$ satisfies
\eqref{eq:app-shannon-endpoint-EL}. Subtracting
\eqref{eq:app-shannon-endpoint-EL} from
\eqref{eq:app-shannon-limit-equation} and taking
    $v=\bar u^k-u_0^k$
gives
\begin{align*}
\alpha
\|\nabla(\bar u^k-u_0^k)\|_{L^2(\Omega)}^2
&+
\int_\Omega \left(\log \bar{u}^k-\log u_0^k \right) \left(\bar{u}^k-u_0^k \right) \diff x
=0.
\end{align*}
Hence, we obtain $\bar u^k=u_0^k$.

It remains to upgrade the convergence to strong $H^1$. Let $e_\delta^k = u_\delta^k-u_0^k$.
Subtracting \Cref{eq:app-shannon-endpoint-EL} from \eqref{eq:app-compatible-EL} and testing with $e_\delta^k$ gives
\begin{align}
\alpha \|\nabla e_\delta^k\|_{L^2}^2
+
\left(
\log(u_\delta^k+\delta)-\log u_0^k,
e_\delta^k
\right)
=
\left(
\log(u_\delta^{k-1}+\delta)-\log u_0^{k-1},
e_\delta^k
\right).
\label{eq:app-stability-dual-estimate}
\end{align}
Therefore,
\begin{equation*}
\alpha \|\nabla e_\delta^k\|_{L^2}^2
\leq
\Bigl(
\|\log(u_\delta^k+\delta)-\log u_0^k\|_{L^2(\Omega)}
+ \|\log(u_\delta^{k-1}+\delta)-\log u_0^{k-1}\|_{L^2(\Omega)}
\Bigr)
\|e_\delta^k\|_{L^2(\Omega)}.
\end{equation*}
By \Cref{lem:app-shannon-entropy-limit} and Poincar\'e's inequality, taking the maximum over the finite set
$\{0,\ldots,K\}$ proves
\eqref{eq:app-shannon-finite-horizon}.

It remains to prove \eqref{eq:app-shannon-finite-horizon-bregman}. Since
\(u_\delta^*=0\), we have
\begin{equation}
\int_\Omega
D_r(u_\delta^*+\delta,u_\delta^k+\delta)
\diff x
=
|\Omega|r(\delta)
-
\int_\Omega
r(u_\delta^k+\delta)
\diff x
+
\bigl(
    \log(u_\delta^k+\delta),
    u_\delta^k
\bigr).
\label{eq:app-shannon-bregman-representation}
\end{equation}
The continuity of $r$ at zero gives $r(\delta)
    \to
    r(0)$.
By \Cref{lem:app-shannon-entropy-limit} and
\eqref{eq:app-shannon-finite-horizon}, we have
\[
    u_\delta^k+\delta
    \to
    u_0^k
    \qquad
    \text{a.e. in }\Omega.
\]
Moreover, we have $0\leq u_\delta^k+\delta
    \leq
    \|u^0\|_{L^\infty(\Omega)}+1$.
Since $r$ is continuous, the dominated convergence theorem gives
\[
    r(u_\delta^k+\delta)
    \to
    r(u_0^k)
    \qquad
    \text{in }L^1(\Omega).
\]
For the last term in
\eqref{eq:app-shannon-bregman-representation}, we have
\begin{align*}
\left|
\bigl(
    \log(u_\delta^k+\delta),
    u_\delta^k
\bigr)
-
\bigl(
    \log u_0^k,
    u_0^k
\bigr)
\right|
\leq&
\|\log(u_\delta^k+\delta)-\log u_0^k\|_{L^2(\Omega)}
\|u_\delta^k\|_{L^2(\Omega)}
\\
&+
\|\log u_0^k\|_{L^2(\Omega)}
\|u_\delta^k-u_0^k\|_{L^2(\Omega)}
\to 0,
\end{align*}
where we used
\eqref{eq:app-shannon-entropy-limit},
\eqref{eq:app-shannon-finite-horizon}, and the uniform
\(L^2(\Omega)\)-boundedness of \(u_\delta^k\).
Passing to the limit in
\eqref{eq:app-shannon-bregman-representation}, we conclude that
\[
\int_\Omega
D_r(u_\delta^*+\delta,u_\delta^k+\delta)
\diff x
\to
\int_\Omega
D_r(u_0^*,u_0^k)
\diff x.
\]
Since this holds for every \(k\in\{0,\ldots,K\}\), taking the maximum
over this finite set proves
\eqref{eq:app-shannon-finite-horizon-bregman}.
\end{proof}

\begin{theorem}[Sharpness for Shannon entropy]
\label{thm:app-shannon-uniform-sharpness}
For $\{u_\delta^k\}$ defined by
\eqref{eq:app-compatible-iteration}, it holds that
\begin{equation}
\label{eq:app-shannon-uniform-sharpness}
    \liminf_{k\to\infty}
    k
    \sup_{0<\delta\leq1}
    \|u_\delta^k-u_\delta^*\|_{H^1(\Omega)}
    \geq
    L_{\rm sh}
    >0,
\end{equation}
and
\begin{equation}
\label{eq:app-shannon-bregman-sharpness}
    \liminf_{k\to\infty}
    k
    \sup_{0<\delta\leq1}
    \int_\Omega
        D_r(u_\delta^*+\delta,u_\delta^k+\delta)
    \diff x
    \geq
    L^D_{\rm sh}
    >0.
\end{equation}
\end{theorem}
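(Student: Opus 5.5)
The plan is a finite-horizon stability argument: the supremum over $\delta$ is bounded below by the limit $\delta\downarrow0$, and that limit is the explicit endpoint iteration of \Cref{thm:app-shannon-model}, whose exact asymptotics are already known.

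Fix $k\geq1$. Since $u_\delta^*=0$ for every $\delta$, \Cref{lem:app-shannon-finite-horizon} applied with $K=k$ gives
\[
    \|u_\delta^k-u_\delta^*\|_{H^1(\Omega)}=\|u_\delta^k\|_{H^1(\Omega)}\to\|u_0^k\|_{H^1(\Omega)}
    \qquad\text{as }\delta\downarrow0 .
\]
A supremum over $(0,1]$ dominates any limit of values along $\delta\downarrow0$. Hence
\[
    \sup_{0<\delta\leq1}\|u_\delta^k-u_\delta^*\|_{H^1(\Omega)}\geq\|u_0^k\|_{H^1(\Omega)}.
\]
In the same way, \eqref{eq:app-shannon-finite-horizon-bregman} gives
\[
    \sup_{0<\delta\leq1}\int_\Omega D_r(u_\delta^*+\delta,u_\delta^k+\delta)\diff x\geq\int_\Omega D_r(u_0^*,u_0^k)\diff x .
\]
The important point is that $k$ is held fixed while $\delta\downarrow0$. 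We never need uniformity in $k$, only the finite-horizon convergence that the lemma already provides.

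Next, multiply both inequalities by $k$ and take $\liminf_{k\to\infty}$. By \eqref{eq:app-shannon-asymptotic}, $k\|u_0^k\|_{H^1(\Omega)}\to L_{\rm sh}$. By \eqref{eq:app-shannon-bregman-asymptotic}, $k\int_\Omega D_r(u_0^*,u_0^k)\diff x\to L^D_{\rm sh}$. This yields \eqref{eq:app-shannon-uniform-sharpness} and \eqref{eq:app-shannon-bregman-sharpness}. Positivity of the two constants follows because $w>0$ in $\Omega$, so $w\not\equiv0$.

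As a consistency check on the endpoint Bregman term, note that $D_r(0,b)=r(0)-r(b)+r'(b)\,b=b$ for the Shannon entropy. Thus $\int_\Omega D_r(u_0^*,u_0^k)\diff x=c_k\int_\Omega w\diff x$, which matches the stated constant via $c_k\sim(\alpha\mu k)^{-1}$.

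There is no real obstacle remaining, since the analytic work sits in \Cref{lem:app-shannon-finite-horizon}. The only step needing care is the order of limits: take $\delta\downarrow0$ at fixed $k$ first, and only then let $k\to\infty$. One should also note that this is exactly why the result is a uniform worst-case statement. For each fixed $\delta>0$, \Cref{thm:main-linear-convergence}-type behaviour may hold, but with constants that degenerate as $\delta\downarrow0$.
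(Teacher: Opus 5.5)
Your proposal is correct and follows the paper's proof essentially verbatim: for fixed $k$ you bound the supremum over $\delta$ from below by the $\delta\downarrow0$ limit supplied by \Cref{lem:app-shannon-finite-horizon}, and then you multiply by $k$ and invoke the endpoint asymptotics \eqref{eq:app-shannon-asymptotic} and \eqref{eq:app-shannon-bregman-asymptotic}. Your consistency check $D_r(0,b)=b$, which gives $\int_\Omega D_r(u_0^*,u_0^k)\diff x=c_k\int_\Omega w\diff x$, correctly recovers the constant $L^D_{\rm sh}$.
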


\begin{proof}
For every fixed $k$, \eqref{eq:app-exact-zero} and
\eqref{eq:app-shannon-finite-horizon} give
\begin{align*}
\sup_{0<\delta\leq1}
\|u_\delta^k-u_\delta^*\|_{H^1(\Omega)}
=
\sup_{0<\delta\leq1}
\|u_\delta^k\|_{H^1(\Omega)}
\geq
\lim_{\delta\downarrow0}
\|u_\delta^k\|_{H^1(\Omega)}
=
\|u_0^k\|_{H^1(\Omega)}.
\end{align*}
By \eqref{eq:app-shannon-finite-horizon-bregman}, for every fixed \(k\),
\begin{align*}
\sup_{0<\delta\leq1}
\int_\Omega
    D_r(u_\delta^*+\delta,u_\delta^k+\delta)
\diff x
\geq
\lim_{\delta\downarrow0}
\int_\Omega
    D_r(u_\delta^*+\delta,u_\delta^k+\delta)
\diff x
=
\int_\Omega D_r(u_0^*,u_0^k)\diff x.
\end{align*}
Multiplying by $k$, taking the lower limit, and using
\eqref{eq:app-shannon-asymptotic} and \eqref{eq:app-shannon-bregman-asymptotic} gives
\eqref{eq:app-shannon-uniform-sharpness} and \eqref{eq:app-shannon-bregman-sharpness}.
\end{proof}

In particular, as $\delta\downarrow0$, the constructed iterates converge on
every fixed finite horizon to the explicitly solvable boundary-degenerate
endpoint, whose $H^1$ error and Bregman error both have the exact algebraic
order $k^{-1}$. Consequently, \Cref{thm:app-shannon-uniform-sharpness} shows
that the $\mathcal O(k^{-1})$ rates are sharp in the uniform worst-case sense
over this family. More precisely, there do not exist a sequence
$\rho_k=o(k^{-1})$ and a constant $C>0$, independent of $\delta$, such that
\begin{equation*}
    \|u_\delta^k-u_\delta^*\|_{H^1(\Omega)}
    \leq C\rho_k
\end{equation*}
for all $0<\delta\leq1$ and all sufficiently large $k$. The same conclusion
holds for the corresponding Bregman error.

\subsection{The Tsallis entropy}

Let $1<q<2$ and take $r = r_q$, for which $r_q'(s)=\frac{s^{1-q}-1}{1-q}$. For
the remainder of this subsection, because the entropy variable $r'(u_0^k)$
generally belongs only to \(H^{-1}(\Omega)\), the Bregman divergence is
interpreted via the \(H^{-1}(\Omega)\)--\(H_0^1(\Omega)\) duality pairing,
i.e.,
\begin{equation}
\label{eq:app-tsallis-dual-bregman}
    \int_\Omega D_{r}(v,u) \diff x
    \coloneqq
    \int_\Omega
        r(v)-r(u)
    \diff x
    -
    \left\langle
        r'(u),v-u
    \right\rangle_{H^{-1}(\Omega),H_0^1(\Omega)}.
\end{equation}
The initial iterate is $u^0=c_0w$, for some constant $c_0 > 0$,
where $w$ is the positive solution of
\begin{equation}
\label{eq:weak-tsallis-profile}
    -\Delta w
    =
    \mu w^{1-q}
    \quad\text{in }\Omega,
    \qquad
    w=0
    \quad\text{on }\partial\Omega.
\end{equation}
The singular elliptic theory for
\eqref{eq:weak-tsallis-profile} gives
\[
    w\in H_0^1(\Omega)\cap L^\infty(\Omega),
    \qquad
    w\geq a_0d;
\]
see \cite{crandall1977dirichlet,lazer1991singular}.
We can show that
\begin{equation*}
    r'(w) = \frac{w^{1-q} - 1}{1-q}\in H^{-1}(\Omega).
\end{equation*}
Indeed, for every $v\in H_0^1(\Omega)$,
\begin{align*}
\left|
    \int_\Omega w^{1-q}v\diff x
\right|
&\leq
C
\int_\Omega d^{1-q}|v|\diff x
\\
&\leq
C
\|d^{2-q}\|_{L^2(\Omega)}
\left\|
    \frac{v}{d}
\right\|_{L^2(\Omega)}
\\
&\leq
C
\|\nabla v\|_{L^2(\Omega)},
\end{align*}
where Hardy's inequality is applied in the last step.
Since $u^0$ is a weak supersolution,
\Cref{thm:app-double-monotonicity} also applies in this section.

\begin{proposition}[Boundary-degenerate Tsallis model]
\label{thm:app-tsallis-model}
There exists a unique positive sequence $\{c_k\}_{k\geq0}$ satisfying
\begin{equation}
\label{eq:app-tsallis-recurrence}
    c_k^{1-q}
    -
    c_{k-1}^{1-q}
    =
    (q-1)\alpha\mu c_k.
\end{equation}
The functions
\begin{equation*}
    u_0^k=c_kw
\end{equation*}
satisfy
\begin{equation}
\label{eq:app-tsallis-endpoint-EL}
    \alpha(\nabla u_0^k,\nabla v)
    +
    \left\langle
        r_q'(u_0^k)-r_q'(u_0^{k-1}),
        v
    \right\rangle_{H^{-1},H_0^1}
    =0, \quad \forall v \in H_0^1(\Omega).
\end{equation}
Moreover, it holds
\begin{align}
    \label{eq:app-tsallis-asymptotic-c-k}
    & c_k
    \sim
    (q\alpha\mu k)^{-1/q},\\
    \label{eq:app-tsallis-asymptotic}
    & k^{1/q}
    \|u_0^k\|_{H^1(\Omega)}
    \to
    L_q
    \coloneqq
    (q\alpha\mu)^{-1/q} \|w\|_{H^1(\Omega)}
    >0,\\
    \label{eq:app-tsallis-bregman-asymptotic}
    & k^{(2-q)/q}
    \int_\Omega
    D_r(u_0^*,u_0^k)
    \diff x
    \to
    L_q^D
    \coloneqq
    \frac{(q\alpha\mu)^{-(2-q)/q}}{2-q}
    \int_\Omega w^{2-q}\diff x
    >0.
\end{align}
\end{proposition}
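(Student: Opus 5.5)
The proof mirrors that of \Cref{thm:app-shannon-model}, with the logarithm replaced by the Tsallis entropy variable $r_q'(s)=(s^{1-q}-1)/(1-q)$.

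\emph{Step 1: the recurrence.} For fixed $c_{k-1}>0$, consider
\[
h(c)=c^{1-q}-c_{k-1}^{1-q}-(q-1)\alpha\mu c .
\]
Since $1-q<0$, this function is strictly decreasing on $(0,\infty)$. It tends to $+\infty$ as $c\downarrow0$, and $h(c_{k-1})=-(q-1)\alpha\mu c_{k-1}<0$. Hence there is a unique root $c_k\in(0,c_{k-1})$. By induction the sequence is positive and decreasing, so it has a limit $c_\infty\geq0$. Passing to the limit in \eqref{eq:app-tsallis-recurrence} gives $c_\infty=0$ whenever $c_\infty>0$ is assumed, so $c_k\to0$.

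\emph{Step 2: the endpoint equation.} Write $r_q'(c_kw)-r_q'(c_{k-1}w)=\frac{(c_k^{1-q}-c_{k-1}^{1-q})\,w^{1-q}}{1-q}$. We showed above that $w^{1-q}\in H^{-1}(\Omega)$ via Hardy's inequality, so this pairing is well defined. The weak form of \eqref{eq:weak-tsallis-profile} gives $(\nabla w,\nabla v)=\mu\langle w^{1-q},v\rangle$. The left side of \eqref{eq:app-tsallis-endpoint-EL} therefore equals
\[
\Bigl[\alpha\mu c_k-\tfrac{c_k^{1-q}-c_{k-1}^{1-q}}{q-1}\Bigr]\langle w^{1-q},v\rangle ,
\]
and the bracket vanishes by \eqref{eq:app-tsallis-recurrence}.

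\emph{Step 3: asymptotics of $c_k$ (the main step).} Set $x_k=c_k^{-q}$ and apply Stolz--Ces\`aro to $x_k/k$, aiming for $x_k-x_{k-1}\to q\alpha\mu$. Put $t=c_k^{q}$. The recurrence gives $c_{k-1}^{1-q}=c_k^{1-q}\bigl(1-(q-1)\alpha\mu t\bigr)$, hence
\[
c_{k-1}^{-q}=c_k^{-q}\bigl(1-(q-1)\alpha\mu t\bigr)^{q/(q-1)} .
\]
A Taylor expansion then yields
\[
x_k-x_{k-1}=t^{-1}\Bigl[1-\bigl(1-(q-1)\alpha\mu t\bigr)^{q/(q-1)}\Bigr]\to q\alpha\mu \quad\text{as } t\to0 .
\]
The only care needed is that $1-(q-1)\alpha\mu t>0$. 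This holds automatically, because the left side $c_{k-1}^{1-q}/c_k^{1-q}$ is positive. Consequently $c_k^{-q}/k\to q\alpha\mu$, which is \eqref{eq:app-tsallis-asymptotic-c-k}.

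\emph{Step 4: the error asymptotics.} Since $u_0^*=0$, we have $\|u_0^k\|_{H^1}=c_k\|w\|_{H^1}$. Multiplying by $k^{1/q}$ gives \eqref{eq:app-tsallis-asymptotic}.

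For the Bregman error we use the dual definition \eqref{eq:app-tsallis-dual-bregman} with $v=0$:
\[
\int_\Omega D_r(0,c_kw)\diff x=\int_\Omega\bigl(r_q(0)-r_q(c_kw)\bigr)\diff x+\langle r_q'(c_kw),c_kw\rangle .
\]
Expanding with $r_q(s)=\frac{s^{2-q}-(2-q)s+(1-q)}{(q-1)(q-2)}$, the constant terms and the terms linear in $c_kw$ cancel. This cancellation is the bookkeeping to check carefully, interpreting the pairing $\langle w^{1-q},w\rangle$ as $\int_\Omega w^{2-q}\diff x$, which is finite because $w\in L^\infty(\Omega)$. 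What remains is $c_k^{2-q}\int_\Omega w^{2-q}\diff x$ times the coefficient
\[
-\frac{1}{(q-1)(q-2)}+\frac{1}{1-q}=\frac{1}{2-q}.
\]
Finally, $k^{(2-q)/q}c_k^{2-q}\to(q\alpha\mu)^{-(2-q)/q}$ by Step 3, which gives \eqref{eq:app-tsallis-bregman-asymptotic}.
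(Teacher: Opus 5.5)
Your proof is correct and follows essentially the same route as the paper: a monotonicity argument gives existence and uniqueness for the scalar recurrence, the weak form of the singular profile equation verifies the endpoint equation, and Stolz--Ces\`aro is applied to $c_k^{-q}$ via $c_k^{-q}-c_{k-1}^{-q}=c_k^{-q}\bigl[1-(1-(q-1)\alpha\mu c_k^{q})^{q/(q-1)}\bigr]\to q\alpha\mu$. You also fill in two details the paper calls immediate: why $c_k\to0$, and the explicit identity $\int_\Omega D_r(u_0^*,u_0^k)\diff x=\frac{c_k^{2-q}}{2-q}\int_\Omega w^{2-q}\diff x$.
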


\begin{proof}
For fixed $c_{k-1}>0$, since $F(c)=c^{1-q}-(q-1)\alpha\mu c-c_{k-1}^{1-q}$ is decreasing, $F(0^+) = + \infty$ and $F(c_{k-1}) < 0$, \eqref{eq:app-tsallis-recurrence} determines a unique $c_k$. Moreover, it holds that
\[
    0 < c_k < c_{k-1} \quad \text{and} \quad c_k \to 0.
\]

Using \eqref{eq:weak-tsallis-profile} and the identity
\begin{align*}
r_q'(c_kw)-r_q'(c_{k-1}w)
&=
\frac{
    c_k^{1-q}-c_{k-1}^{1-q}
}{1-q}
w^{1-q} \in H^{-1}(\Omega),
\end{align*}
we obtain
\begin{align*}
&\alpha(\nabla(c_kw),\nabla v)
+
\left\langle
    r_q'(c_kw)-r_q'(c_{k-1}w),
    v
\right\rangle_{H^{-1},H_0^1}
\\
&\qquad
=
\left[
    \alpha\mu c_k
    +
    \frac{
        c_k^{1-q}-c_{k-1}^{1-q}
    }{1-q}
\right]
\int_\Omega w^{1-q}v\diff x
=
0 \quad \forall v \in H_0^1(\Omega),
\end{align*}
which proves \eqref{eq:app-tsallis-endpoint-EL}.

The recurrence \eqref{eq:app-tsallis-recurrence} implies that
\begin{align}
    c_k^{-q} - c_{k-1}^{-q} = c_{k}^{-q} \left( 1-(1-(q-1)\alpha \mu c_{k}^q)^{\frac{-q}{1-q}} \right) \to q\alpha \mu.
\end{align}
Therefore, by the Stolz--Ces\`aro theorem, we obtain
\begin{equation*}
    \frac{c_k^{-q}}{k} \to q\alpha \mu,
\end{equation*}
which proves \eqref{eq:app-tsallis-asymptotic-c-k}. Since $u_0^* = 0$, \eqref{eq:app-tsallis-asymptotic} and \eqref{eq:app-tsallis-bregman-asymptotic} follow immediately.
\end{proof}

As in the Shannon case, although $\delta=0$ is not covered by
\Cref{thm:entropic-pde}, the function $u_0^k$ in \Cref{thm:app-tsallis-model}
is still the unique minimizer of the corresponding boundary-degenerate proximal
functional over $K_0$, in the $H^{-1}(\Omega)$--$H_0^1(\Omega)$ sense.

\begin{lemma}[Convergence of the Tsallis entropy variables]
\label{lem:app-tsallis-entropy-limit}
For every fixed $k \geq 0$,
the pointwise limit
\begin{equation}
\label{eq:app-tsallis-pointwise-limit}
    \bar u^k
    \coloneqq
    \lim_{\delta\downarrow0}u_\delta^k
\end{equation}
exists a.e. in $\Omega$. There is also $a_k>0$, independent of
$0<\delta\leq1$, such that
\begin{equation}
\label{eq:app-tsallis-uniform-boundary-barrier}
    a_kd
    \leq
    u_\delta^k
    \leq
    u^0,
    \qquad
    a_kd
    \leq
    \bar u^k
    \leq
    u^0.
\end{equation}
Consequently,
\begin{equation}
\label{eq:app-tsallis-entropy-limit}
    r_q'(u_\delta^k+\delta)
    \to
    r_q'(\bar u^k)
    \qquad
    \text{in }H^{-1}(\Omega).
\end{equation}
\end{lemma}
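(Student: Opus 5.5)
We follow the structure of \Cref{lem:app-shannon-entropy-limit}, adapting only the final step to the $H^{-1}$ setting.

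\textbf{Pointwise limit.} By \eqref{eq:app-delta-monotonicity} of \Cref{thm:app-double-monotonicity}, for fixed $k$ the map $\delta\mapsto u_\delta^k$ is nonincreasing in $\delta$. By \eqref{eq:app-k-monotonicity} it is bounded by $u^0$. Hence the a.e.\ limit $\bar u^k$ exists as $\delta\downarrow0$, and $\bar u^k\le u^0$.

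\textbf{Lower barrier.} For $k=0$ we use $u^0=c_0w\ge c_0a_0d$. For $k\ge1$, monotonicity in $\delta$ gives $u_\delta^k\ge u_1^k$, so it suffices to show $u_1^k\ge a_kd$. By \eqref{eq:app-compatible-EL} with $\delta=1$,
\[
-\Delta u_1^k=f_k\coloneqq\alpha^{-1}\bigl[r_q'(1+u_1^{k-1})-r_q'(1+u_1^k)\bigr].
\]
Since $0\le u_1^k\le u_1^{k-1}\le u^0\in L^\infty$ and $r_q'$ is smooth on $[1,\infty)$, we get $0\le f_k\in L^\infty(\Omega)$. Moreover $f_k\not\equiv0$: otherwise $u_1^k=0$, and inductively $u^0=0$. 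Then $W^{2,p}$ regularity for $p>d$, the strong maximum principle, and the Hopf lemma give $u_1^k>0$ in $\Omega$ and $-\partial_\nu u_1^k>0$ on $\partial\Omega$, hence $u_1^k\ge a_kd$. This argument is verbatim the Shannon one. The $H^1$ bound needed later comes from energy dissipation.

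\textbf{Convergence in $H^{-1}$; this is the main step.} Write $r_q'(s)=(s^{1-q}-1)/(1-q)$, so only $h_\delta\coloneqq(u_\delta^k+\delta)^{1-q}\to(\bar u^k)^{1-q}$ needs to be handled. Pointwise convergence holds a.e. We also have the domination
\[
0\le h_\delta\le (a_kd)^{1-q}\eqqcolon G,
\]
and the same bound holds for the limit. We cannot use $L^2$ convergence, since $d^{1-q}\notin L^2$ when $q\ge3/2$. Instead we use the Hardy argument displayed before \Cref{thm:app-tsallis-model}. For $v\in H_0^1(\Omega)$,
\[
\bigl|\langle h_\delta-\bar h,v\rangle\bigr|
\le\int_\Omega |h_\delta-\bar h|\,d\cdot\frac{|v|}{d}\,dx
\le\bigl\|(h_\delta-\bar h)\,d\bigr\|_{L^2(\Omega)}\,C_H\|\nabla v\|_{L^2(\Omega)}.
\]
Therefore
\[
\|h_\delta-\bar h\|_{H^{-1}}\le C_H\bigl\|(h_\delta-\bar h)\,d\bigr\|_{L^2(\Omega)}.
\]
Now $|(h_\delta-\bar h)d|\le 2a_k^{1-q}d^{2-q}\in L^2(\Omega)$ because $2-q>0$, and the integrand tends to $0$ a.e. Dominated convergence then gives $\|h_\delta-\bar h\|_{H^{-1}}\to0$, which is \eqref{eq:app-tsallis-entropy-limit}. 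The constant part $-1/(1-q)$ cancels in the difference.

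The key point is to place the weight $d$ on the function side so that the singularity $d^{1-q}$ becomes the square-integrable $d^{2-q}$. The uniform barrier $a_kd$ is exactly what makes this domination independent of $\delta$.
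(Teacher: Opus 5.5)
Your proposal is correct and follows essentially the same route as the paper. You use monotonicity in $\delta$ for the pointwise limit, and the Shannon-type comparison with $u_1^k$ (via $W^{2,p}$ regularity, the strong maximum principle and the Hopf lemma) for the uniform barrier $a_k d$. The $H^{-1}$ convergence then comes from the Hardy-inequality bound $\|\cdot\|_{H^{-1}}\lesssim\|d\,(\cdot)\|_{L^2}$ together with dominated convergence using the majorant $d^{2-q}\in L^2(\Omega)$. The only cosmetic difference is that you cancel the constant part of $r_q'$ first, whereas the paper dominates the full difference by $C_k(1+d^{1-q})$.
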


\begin{proof}
The existence of the limit in
\eqref{eq:app-tsallis-pointwise-limit} follows directly from the monotone convergence theorem and \eqref{eq:app-delta-monotonicity}.

The proof of \eqref{eq:app-tsallis-uniform-boundary-barrier} follows the same argument as the proof of \eqref{eq:app-uniform-boundary-barrier} in
\Cref{lem:app-shannon-entropy-limit}. Indeed,
for $k\geq1$,
\begin{equation}
\label{eq:app-tsallis-u1-equation}
    -\Delta u_1^k
    =
    \frac1\alpha
    \left[
        r_q'(1+u_1^{k-1})
        -
        r_q'(1+u_1^k)
    \right].
\end{equation}
Because $0 \leq u_1^k \leq u_1^{k-1} \leq u^0$,
the right-hand side of
\eqref{eq:app-tsallis-u1-equation} is bounded, nonnegative, and nonzero.
The same argument yields
\[
    a_kd \leq u_1^k \leq u_\delta^k \leq u^0.
\]

It remains to prove $H^{-1}$ convergence. First
\[
    r_q'(u_\delta^k+\delta)
    -
    r_q'(\bar u^k) \to 0
    \qquad
    \text{a.e. in }\Omega.
\]
By \eqref{eq:app-tsallis-uniform-boundary-barrier}, we get
\begin{equation}
\label{eq:app-tsallis-h-bound}
    |r_q'(u_\delta^k+\delta)
    -
    r_q'(\bar u^k)|
    \leq
    C_k
    \bigl(
        1+d^{1-q}
    \bigr).
\end{equation}
For every $v\in H_0^1(\Omega)$, Hardy's inequality gives
\begin{align*}
|\langle r_q'(u_\delta^k+\delta)
    -
    r_q'(\bar u^k),v\rangle|
&\leq
\int_\Omega
d|r_q'(u_\delta^k+\delta)
    -
    r_q'(\bar u^k)|
\frac{|v|}{d}
\diff x
\nonumber\\
&\leq
\|d (r_q'(u_\delta^k+\delta)
    -
    r_q'(\bar u^k))\|_{L^2(\Omega)}
\left\|
    \frac{v}{d}
\right\|_{L^2(\Omega)}
\nonumber\\
&\leq
C_H
\|d (r_q'(u_\delta^k+\delta)
    -
    r_q'(\bar u^k))\|_{L^2(\Omega)}
\|\nabla v\|_{L^2(\Omega)}.
\end{align*}
Thus
\[
    \|r_q'(u_\delta^k+\delta)
    -
    r_q'(\bar u^k)\|_{H^{-1}(\Omega)}
    \lesssim
    \|d (r_q'(u_\delta^k+\delta)
    -
    r_q'(\bar u^k))\|_{L^2(\Omega)}.
\]
By \eqref{eq:app-tsallis-h-bound}, we obtain
\[
    |d (r_q'(u_\delta^k+\delta)
    -
    r_q'(\bar u^k))|
    \leq
    C_k
    \bigl(
        d+d^{2-q}
    \bigr) \in L^2(\Omega),
\]
because
$2-q>0$. The dominated convergence theorem therefore gives
\[
    \|d (r_q'(u_\delta^k+\delta)
    -
    r_q'(\bar u^k))\|_{L^2(\Omega)}
    \to 0,
\]
which proves \eqref{eq:app-tsallis-entropy-limit}.
\end{proof}

\begin{lemma}[$H^1$ and Bregman stability for Tsallis entropy]
The limit $\bar u^k$ defined in
\Cref{lem:app-tsallis-entropy-limit} satisfies
$\bar u^k=u_0^k$ for every $k\geq0$. Moreover, for every fixed integer $K\geq0$,
\begin{equation}
\label{eq:app-tsallis-finite-horizon}
    \max_{0\leq k\leq K}
    \|u_\delta^k-u_0^k\|_{H^1(\Omega)}
    \to 0
    \qquad
    \text{as }\delta\downarrow0,
\end{equation}
and
\begin{equation}
\label{eq:app-tsallis-finite-horizon-bregman}
    \max_{0\leq k\leq K}
    \left|
    \int_\Omega
    D_r(u_\delta^*+\delta,u_\delta^k+\delta)
    \diff x
    -
    \int_\Omega
    D_r(u_0^*,u_0^k)
    \diff x
    \right|
    \to 0
    \qquad
    \text{as }\delta\downarrow0.
\end{equation}
\end{lemma}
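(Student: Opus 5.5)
I would mirror the Shannon stability argument (\Cref{lem:app-shannon-finite-horizon}), replacing $L^2$ convergence of the entropy variables by the $H^{-1}$ convergence \eqref{eq:app-tsallis-entropy-limit}.

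\emph{Step 1: weak convergence and identification of the limit.} By \Cref{thm:app-double-monotonicity} the iterates satisfy $0\le u_\delta^k\le u^0$, and they converge monotonically to $\bar u^k$. Hence $u_\delta^k\to\bar u^k$ in $L^2(\Omega)$ by dominated convergence. Energy dissipation (\Cref{lem:energy-dissipation}) gives $E(u_\delta^k)\le E(u^0)$, a bound uniform in $\delta$. It follows that $u_\delta^k\rightharpoonup\bar u^k$ in $H_0^1(\Omega)$.

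Next I identify $\bar u^k=u_0^k$ by induction on $k$; the case $k=0$ is trivial. For the inductive step I pass to the limit in \eqref{eq:app-compatible-EL}. The weak convergence handles the gradient term. \Cref{lem:app-tsallis-entropy-limit}, applied at indices $k$ and $k-1$ together with the induction hypothesis $\bar u^{k-1}=u_0^{k-1}$, handles the two entropy terms in $H^{-1}(\Omega)$. This yields the limit equation
\[
\alpha(\nabla\bar u^k,\nabla v)+\langle r_q'(\bar u^k)-r_q'(u_0^{k-1}),v\rangle=0 .
\]
I subtract \eqref{eq:app-tsallis-endpoint-EL} and test with $v=\bar u^k-u_0^k\in H_0^1(\Omega)$. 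The pairing $\langle r_q'(\bar u^k)-r_q'(u_0^k),\bar u^k-u_0^k\rangle$ is a genuine integral of a nonnegative function: both functions lie between $a_kd$ and $u^0$, so by Hardy's inequality the pairing is an absolutely convergent integral, and $r_q'$ is monotone. Hence $\|\nabla(\bar u^k-u_0^k)\|=0$.

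\emph{Step 2: strong $H^1$ convergence.} Set $e_\delta^k=u_\delta^k-u_0^k$, subtract the two weak forms, and test with $e_\delta^k$. This gives
\[
\alpha\|\nabla e_\delta^k\|^2\le\bigl(\|r_q'(u_\delta^k+\delta)-r_q'(u_0^k)\|_{H^{-1}}+\|r_q'(u_\delta^{k-1}+\delta)-r_q'(u_0^{k-1})\|_{H^{-1}}\bigr)\|\nabla e_\delta^k\|.
\]
Both $H^{-1}$ norms tend to zero by \Cref{lem:app-tsallis-entropy-limit} and Step 1, and $\|\nabla e_\delta^k\|$ is bounded. Poincar\'e's inequality then upgrades this to $H^1$ convergence, and taking a maximum over finitely many $k$ gives \eqref{eq:app-tsallis-finite-horizon}.

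\emph{Step 3: Bregman convergence (the main obstacle).} Since $u_\delta^*=0$, I write
\[
\int_\Omega D_r(\delta,u_\delta^k+\delta)\,dx=|\Omega|r(\delta)-\int_\Omega r(u_\delta^k+\delta)\,dx+\langle r_q'(u_\delta^k+\delta),u_\delta^k\rangle .
\]
For $\delta>0$ the last pairing is an honest integral, because $u_\delta^k\in K_\delta^\circ$. At $\delta=0$ the corresponding expression is exactly the dual-pairing definition \eqref{eq:app-tsallis-dual-bregman}, with $v=0$ and $u=u_0^k$. The first two terms converge by continuity of $r_q$ at $0$ and dominated convergence: the arguments are bounded in $[0,\|u^0\|_\infty+1]$ and converge a.e. For the pairing term I split
\[
\langle r_q'(u_\delta^k+\delta)-r_q'(u_0^k),u_\delta^k\rangle+\langle r_q'(u_0^k),u_\delta^k-u_0^k\rangle .
\]
The first piece is bounded by an $H^{-1}$ norm tending to zero times a uniform $H^1$ bound. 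The second is bounded by $\|r_q'(u_0^k)\|_{H^{-1}}\|e_\delta^k\|_{H^1}\to0$, where $r_q'(u_0^k)\in H^{-1}$ by the Hardy argument used for $w$.

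The delicate point is consistency: the $\delta>0$ integral pairing must agree with the $H^{-1}$--$H_0^1$ duality. This holds because the function $r_q'(u_\delta^k+\delta)$ is bounded, so its integral against $H_0^1$ functions coincides with the duality pairing. With that, the maximum over $0\le k\le K$ gives \eqref{eq:app-tsallis-finite-horizon-bregman}.
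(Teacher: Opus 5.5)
Your proposal is correct and follows essentially the same route as the paper. Both arguments use compactness plus the $H^{-1}$ convergence of the entropy variables to identify $\bar u^k=u_0^k$ by induction, the dual energy estimate to get strong $H^1$ convergence, and the three-term Bregman representation with the duality pairing. Your Hardy-based argument that the monotonicity pairing is a genuine nonnegative integral, and your check that the $\delta>0$ integral agrees with the $H^{-1}$--$H_0^1$ pairing, simply make explicit details the paper leaves implicit.
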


\begin{proof}
The compactness part is identical to
\Cref{lem:app-shannon-finite-horizon}. In particular, for each fixed
$k$,
\[
    u_\delta^k
    \to
    \bar u^k
    \quad\text{in }L^2(\Omega),
    \qquad
    u_\delta^k
    \rightharpoonup
    \bar u^k
    \quad\text{in }H^1_0(\Omega).
\]
Using \Cref{lem:app-tsallis-entropy-limit}, we may pass to the limit in
the compatible equation. Assuming inductively that
$\bar u^{k-1}=u_0^{k-1}$ gives
\begin{equation}
\label{eq:app-tsallis-limit-equation}
    \alpha(\nabla\bar u^k,\nabla v)
    +
    \left\langle
        r_q'(\bar u^k)-r_q'(u_0^{k-1}),
        v
    \right\rangle_{H^{-1},H_0^1}
    =0, \quad \forall v \in H_0^1(\Omega).
\end{equation}

On the other hand, $u_0^k=c_kw$ satisfies
\eqref{eq:app-tsallis-endpoint-EL}. Subtracting \eqref{eq:app-tsallis-endpoint-EL} from \eqref{eq:app-tsallis-limit-equation} and  testing with $v=\bar u^k-u_0^k$
gives
\begin{align}
\alpha
\|\nabla(\bar u^k-u_0^k)\|_{L^2(\Omega)}^2
&+
\left\langle
    r_q'(\bar u^k)-r_q'(u_0^k),
    \bar u^k-u_0^k
\right\rangle_{H^{-1},H^1_0}
=0.
\label{eq:app-tsallis-endpoint-uniqueness}
\end{align}
Since $r_q$ is convex, both terms in
\eqref{eq:app-tsallis-endpoint-uniqueness} are nonnegative, and hence
    $\bar u^k=u_0^k$.

To obtain $H^1$ convergence, the same argument as in
\eqref{eq:app-stability-dual-estimate} gives
\begin{align*}
\alpha
\|\nabla(u_\delta^k-u_0^k)\|_{L^2}^2
\leq
\Bigl(
&
\|r_q'(u_\delta^k+\delta)-r_q'(u_0^k)\|_{H^{-1}(\Omega)}
\\
&+
\|r_q'(u_\delta^{k-1}+\delta)-r_q'(u_0^{k-1})\|_{H^{-1}(\Omega)}
\Bigr)
\|u_\delta^k-u_0^k\|_{H_0^1(\Omega)}.
\end{align*}
By \Cref{lem:app-tsallis-entropy-limit} and Poincar\'e's inequality, we obtain
\[
\|u_\delta^k-u_0^k\|_{H^1(\Omega)}
\to0.
\]
Taking the maximum proves
\eqref{eq:app-tsallis-finite-horizon}.

The proof of \eqref{eq:app-tsallis-finite-horizon-bregman} follows the same argument
as in \Cref{lem:app-shannon-finite-horizon}, with two modifications.
First, by \eqref{eq:app-tsallis-dual-bregman}, the Bregman divergence is represented as
\begin{align*}
\int_\Omega
D_r(u_\delta^*+\delta,u_\delta^k+\delta)
\diff x
=
|\Omega|r_q(\delta)
-
\int_\Omega r_q(u_\delta^k+\delta)\diff x
+
\left\langle
    r_q'(u_\delta^k+\delta),
    u_\delta^k
\right\rangle_{H^{-1},H_0^1}.
\end{align*}
Since \(2-q\in(0,1)\) and \(u_\delta^k+\delta\to u_0^k\) in \(L^2(\Omega)\), we have
\[
    r_q(u_\delta^k+\delta)
    \to
    r_q(u_0^k)
    \qquad
    \text{in }L^1(\Omega).
\]
Second, the convergence of the last term follows from
\begin{equation*}
    r_q'(u_\delta^k+\delta)
    \to
    r_q'(u_0^k)
    \quad\text{in }H^{-1}(\Omega)
\quad \text{and} \quad
    u_\delta^k\to u_0^k
    \quad\text{in }H_0^1(\Omega).
\end{equation*}
Consequently,
\[
\int_\Omega
D_r(u_\delta^*+\delta,u_\delta^k+\delta)
\diff x
\to
\int_\Omega
D_r(u_0^*,u_0^k)
\diff x.
\]
Taking the maximum over \(0\leq k\leq K\) proves
\eqref{eq:app-tsallis-finite-horizon-bregman}.
\end{proof}

\begin{theorem}[Sharpness for Tsallis entropy]
For $\{u_\delta^k\}$ defined by \eqref{eq:app-compatible-iteration}, it holds
\begin{equation}
\label{eq:app-tsallis-uniform-sharpness}
    \liminf_{k\to\infty}
    k^{1/q}
    \sup_{0<\delta\leq1}
    \|u_\delta^k-u_\delta^*\|_{H^1(\Omega)}
    \geq
    L_q
    >0,
\end{equation}
and
\begin{equation}
\label{eq:app-tsallis-uniform-bregman-sharpness}
    \liminf_{k\to\infty}
    k^{(2-q)/q}
    \sup_{0<\delta\leq1}
    \int_\Omega
    D_r(u_\delta^*+\delta,u_\delta^k+\delta)
    \diff x
    \geq
    L_q^D
    >0.
\end{equation}
\end{theorem}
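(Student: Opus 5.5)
The plan is to copy the proof of \Cref{thm:app-shannon-uniform-sharpness}, replacing the Shannon ingredients with their Tsallis counterparts. The endpoint iterates $u_0^k=c_kw$ from \Cref{thm:app-tsallis-model} have exact decay rates, and the finite-horizon stability lemma just proved transfers these rates to the compatible problems with $\delta>0$.

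\emph{Step 1 (reduction to $\delta=0$ at fixed $k$).} Fix $k\geq0$. By \eqref{eq:app-exact-zero}, $u_\delta^*=0$, so
\[
\sup_{0<\delta\leq1}\|u_\delta^k-u_\delta^*\|_{H^1(\Omega)}=\sup_{0<\delta\leq1}\|u_\delta^k\|_{H^1(\Omega)}.
\]
This supremum dominates $\lim_{\delta\downarrow0}\|u_\delta^k\|_{H^1(\Omega)}$. That limit exists and equals $\|u_0^k\|_{H^1(\Omega)}$, by applying \eqref{eq:app-tsallis-finite-horizon} with $K=k$ and the reverse triangle inequality.

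In the same way, \eqref{eq:app-tsallis-finite-horizon-bregman} gives
\[
\sup_{0<\delta\leq1}\int_\Omega D_r(u_\delta^*+\delta,u_\delta^k+\delta)\diff x\;\geq\;\int_\Omega D_r(u_0^*,u_0^k)\diff x.
\]
Here the Bregman divergences are understood in the dual sense \eqref{eq:app-tsallis-dual-bregman}, as that lemma requires.

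\emph{Step 2 (asymptotics).} Multiply the first inequality by $k^{1/q}$ and the second by $k^{(2-q)/q}$, then take $\liminf_{k\to\infty}$. The right-hand sides converge to $L_q$ and $L_q^D$ by \eqref{eq:app-tsallis-asymptotic} and \eqref{eq:app-tsallis-bregman-asymptotic}. Both constants are positive because $w>0$ in $\Omega$ and $w\not\equiv0$.

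\emph{Step 3 (the endpoint Bregman value).} The only point needing care is the identity $\int_\Omega D_r(0,c_kw)\diff x=\tfrac{1}{2-q}\int_\Omega (c_kw)^{2-q}\diff x$, which underlies \eqref{eq:app-tsallis-bregman-asymptotic}. To check it, write
\[
r_q(0)-r_q(s)+r_q'(s)s=\frac{s^{2-q}}{2-q},
\]
and evaluate the duality pairing $\langle r_q'(c_kw),c_kw\rangle$ as an honest integral. This is legitimate because $w^{1-q}\cdot w=w^{2-q}\in L^1(\Omega)$. Multiplying by $k^{(2-q)/q}$ and using $c_k\sim(q\alpha\mu k)^{-1/q}$ from \eqref{eq:app-tsallis-asymptotic-c-k} then gives the limit $L_q^D$.

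No step is a real obstacle, since all analytic difficulty sits in the preceding stability lemma. The one thing to watch is that the convergence in $\delta$ is used only at fixed $k$, before the limit in $k$ is taken, so no uniformity in $k$ is ever needed.
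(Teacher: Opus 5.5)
Your proposal is correct and follows the paper's proof essentially verbatim. You fix $k$, bound the supremum over $\delta$ from below by the $\delta\downarrow0$ limit using the finite-horizon stability lemma, multiply by $k^{1/q}$ or $k^{(2-q)/q}$, and pass to the $\liminf$ using the endpoint asymptotics. Your Step~3 makes explicit the identity $D_{r_q}(0,s)=s^{2-q}/(2-q)$ (and the evaluation of the duality pairing as an integral) that the paper dismisses as ``follows immediately'' in \Cref{thm:app-tsallis-model}, and that computation is correct.
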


\begin{proof}
By \eqref{eq:app-exact-zero} and \eqref{eq:app-tsallis-finite-horizon}, for every fixed
$k$,
\begin{align*}
\sup_{0<\delta\leq1}
\|u_\delta^k-u_\delta^*\|_{H^1(\Omega)} = \sup_{0 < \delta \leq 1} \|u_\delta^k\|_{H^1(\Omega)}
\geq
\lim_{\delta\downarrow0}
\|u_\delta^k\|_{H^1(\Omega)}
=
\|u_0^k\|_{H^1(\Omega)}.
\end{align*}
By \eqref{eq:app-tsallis-finite-horizon-bregman}, for fixed $k$,
\begin{align*}
\sup_{0<\delta\leq1}
\int_\Omega
D_r(u_\delta^*+\delta,u_\delta^k+\delta)
\diff x
\geq
\lim_{\delta\downarrow0}
\int_\Omega
D_r(u_\delta^*+\delta,u_\delta^k+\delta)
\diff x
=
\int_\Omega
D_r(u_0^*,u_0^k)
\diff x.
\end{align*}
Multiplying by $k^{1/q}$ and $k^{(2-q)/q}$, respectively, taking the lower limit, and using
\eqref{eq:app-tsallis-asymptotic} and \eqref{eq:app-tsallis-bregman-asymptotic} proves
\eqref{eq:app-tsallis-uniform-sharpness} and \eqref{eq:app-tsallis-uniform-bregman-sharpness}.
\end{proof}

For the Tsallis entropy, the one-sided Bregman growth exponent is
$\theta=2-q$, so \Cref{thm:main-sublinear-convergence} gives the sublinear rates
\begin{equation*}
    \int_\Omega D_R(u^*,u^k) \diff x
    = \mathcal O\left(k^{-(2-q)/q}\right), \quad \|u^k-u^*\|_{H^1(\Omega)}
    = \mathcal O(k^{-1/q}).
\end{equation*}
The family constructed above attains these rates in the uniform worst-case
sense. Hence the exponents $(2-q)/q$ and $1/q$ cannot
be improved while retaining a constant which is uniform over $0<\delta\leq1$.

\section{Verification of \Cref{tab:convergence-rates-model-legendre}}
\label{app:one-sided-growth-exponents}

In this appendix, we verify the exponents in \Cref{tab:convergence-rates-model-legendre}.
Throughout, \(M>0\) is fixed and $0\leq s \leq M$, $0 < t\leq M$. We recall the
definition of the Bregman divergence,
\begin{equation*}
    D_r(s,t)\coloneqq r(s)-r(t)-r'(t)(s-t).
\end{equation*}

For $0 < s < t$, Taylor's formula gives an integral-type expression of the Bregman divergence,
\begin{equation}
\label{eq:bregman-integral-formula}
    D_r(s,t)
    =
    \int_s^t \bigl(r'(t)-r'(\xi)\bigr)\diff \xi
    =
    \int_s^t (\eta-s) r''(\eta)\diff \eta .
\end{equation}
The same estimate for \(s=0\) follows by taking the limit
\(s\downarrow0\), since the model entropies have finite continuous
extensions at the origin.

\begin{itemize}
    \item For the Shannon entropy, we have
    $r_{\rm sh}(s)=s\log s - s$ and $r_{\rm sh}''(s)=\frac1s$.
Using \eqref{eq:bregman-integral-formula},
\begin{equation*}
    D_{r_{\rm sh}}(s,t)
    =
    \int_s^t \frac{\eta-s}{\eta}\diff \eta
    \leq
    \int_s^t 1\diff \eta
    =
    t-s .
\end{equation*}
Thus the Shannon entropy has the one-sided Bregman growth exponent $\theta=1$.

\item
For the Spence entropy, we have $r_{\rm sp}(s)
    =
    \frac12 s^2+\li_2(e^{-s})-\frac{\pi^2}{6}$ and
    $r_{\rm sp}''(s)
    =
    \frac{e^s}{e^s-1}$.
Using \eqref{eq:bregman-integral-formula}, we obtain
\begin{equation*}
    D_{r_{\rm sp}}(s,t)
    =
    \int_s^t
    (\eta-s)\frac{1}{1-e^{-\eta}}\diff \eta
    \leq
    \int_s^t
    \frac{\eta}{1-e^{-\eta}}\diff \eta .
\end{equation*}
Since \(\eta/(1-e^{-\eta})\) is continuous on \([0,M]\) after defining
its value at \(0\) to be \(1\), we may take
    $C_M=\frac{M}{1-e^{-M}}$.
Consequently,
\begin{equation*}
    D_{r_{\rm sp}}(s,t)
    \leq
    C_M\int_s^t 1\diff \eta
    =
    C_M(t-s).
\end{equation*}
Hence its one-sided Bregman growth exponent is $\theta=1$.

\item Let \(1<q<2\). The Tsallis entropy satisfies $r_q''(s)=s^{-q}$.
By \eqref{eq:bregman-integral-formula},
\begin{equation*}
    D_{r_q}(s,t)
    =
    \int_s^t (\eta-s)\eta^{-q}\diff \eta .
\end{equation*}
Since $0<\eta-s\leq\eta$ and $t\mapsto t^{-q}$ is decreasing for
$q>0$, we obtain $\eta^{-q}\leq(\eta-s)^{-q}$.
Then,
\begin{equation*}
    D_{r_q}(s,t)
    \leq \int_s^t  (\eta - s)^{1-q} \diff  \eta
    = \frac{1}{2-q}(t-s)^{2-q}.
\end{equation*}
Therefore the Tsallis entropy has one-sided Bregman growth exponent $\theta=2-q$.

\item
Let \(0<\kappa<1\). The Kaniadakis entropy satisfies $r_\kappa''(z)
    =
    \frac12\left(z^{\kappa-1}+z^{-\kappa-1}\right)$.
Then we get
\begin{equation*}
    D_{r_\kappa}(s,t)
    =
    \frac12\int_s^t(\eta-s)
    \left(\eta^{\kappa-1}+\eta^{-\kappa-1}\right)\diff \eta.
\end{equation*}
Since \(\eta \geq \eta - s\), we have
\begin{align*}
    &\int_s^t (\eta - s)\eta^{-\kappa-1}\diff  \eta
    \leq
    \int_s^t (\eta - s)^{-\kappa}\diff \eta
    =
    \frac{(t-s)^{1-\kappa}}{1-\kappa}, \\
    &\int_s^t (\eta - s)\eta^{\kappa-1}\diff  \eta
    \leq
    \int_s^t (\eta - s)^{\kappa}\diff  \eta
    =
    \frac{(t-s)^{1+\kappa}}{1+\kappa}
    \leq
    \frac{M^{2\kappa}}{1+\kappa}(t-s)^{1-\kappa}.
\end{align*}
Consequently, there exists \(C_{M}>0\) such that
\begin{equation*}
    D_{r_\kappa}(s,t)
    \leq
    C_{M}(t-s)^{1-\kappa}.
\end{equation*}
Hence the Kaniadakis entropy has one-sided Bregman growth exponent $\theta=1-\kappa$.
\end{itemize}

\section{Proof of \Cref{lem:discrete-bihari-lasalle}}
\label{app:discrete-bihari-lasalle}

\begin{proof}
The recursion \eqref{eq:discrete-bihari-hypothesis} implies that $\{B_k\}$ is nonincreasing. The cases
$B_0=0$ or $B_j=0$ for some $j$ are immediate, so assume $B_k>0$.
Set $x_k=\lambda_kB_k^{\beta-1}$. Then
\begin{align*}
B_k^{1-\beta}-B_{k-1}^{1-\beta}
&\ge B_k^{1-\beta}\bigl[1-(1+x_k)^{1-\beta}\bigr]\\
&\ge
B_k^{1-\beta}\frac{(\beta-1)x_k}{(1+x_k)^\beta}\\
&\ge
\frac{(\beta-1)\lambda_k}
     {(1+\lambda_kB_0^{\beta-1})^\beta}.
\end{align*}
Here the second inequality follows from the elementary inequality
\begin{equation*}
    (1+x)^{1-\beta} \leq 1 - \frac{\beta-1}{(1+x_0)^{\beta}}\,x
    \qquad
    \forall\, 0\leq x\leq x_0,
\end{equation*} and the
last follows from $B_k\le B_0$. Summing from $1$ to $k$ proves \eqref{eq:discrete-bihari-conclusion}. The constant-step rate follows immediately.
\end{proof}

\section{Proof of \Cref{prop:strict-complementarity-free-boundary}}
\label{app:strict-complementarity-free-boundary}

In \Cref{lem:regularity-u-star}, we collect the regularity properties of \(u^*\)
and \(w^*=u^*-\phi\) that will be used in the proof of
\Cref{prop:strict-complementarity-free-boundary}.

\begin{lemma}[Regularity of \(u^*\) and \(w^*\)]
\label{lem:regularity-u-star}

Suppose that
\Cref{ass:strict-complementarity-free-boundary} holds. Then the following
regularity properties hold.

\begin{itemize}
    \item
    Interior \(C^{1,\alpha}\)-regularity.
    The solution satisfies
    \begin{equation}
        \label{eq:w-star-local-c1alpha}
        u^*\in C_{\mathrm{loc}}^{1,\alpha}(\Omega) \quad \text{and} \quad w^*\in C_{\mathrm{loc}}^{1,\alpha}(\Omega).
    \end{equation}

    \item
    Uniform one-sided \(C^{2,\alpha}\)-regularity. There exist constants
    \(\rho_{\mathrm{reg}}>0\) and \(C_{\mathrm{reg}}>0\) such that
    \begin{equation}
    \label{eq:w-star-one-sided-c2alpha}
        w^*
        \in
        C^{2,\alpha}
        \bigl(
            \overline{
                \mathcal N_{\rho_{\mathrm{reg}}}^+(\Gamma)
            }
        \bigr)
    \end{equation}
    and
    \begin{equation}
    \label{eq:w-star-one-sided-c2alpha-bound}
        \|w^*\|_{
            C^{2,\alpha}
            (
                \overline{
                    \mathcal N_{\rho_{\mathrm{reg}}}^+(\Gamma)
                }
            )
        }
        \leq C_{\mathrm{reg}},
    \end{equation}
    where
        $\mathcal N_{\rho_{\mathrm{reg}}}^+(\Gamma)
        =
        \Omega_+ \cap
        \left\{
            \dist(x,\Gamma)
            <
            \rho_{\mathrm{reg}}
        \right\}$.

    \item
    Continuity up to the boundary. The solution \(u^*\) admits a continuous
    representative on \(\overline\Omega\), and
    \begin{equation}
        \label{eq:w-star-boundary-continuity}
        u^*\in C(\overline\Omega) \quad \text{and} \quad  w^*\in C(\overline\Omega).
    \end{equation}
\end{itemize}
\end{lemma}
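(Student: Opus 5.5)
We will treat the three bullets in order. Interior and boundary regularity come from standard obstacle-problem theory. The one-sided $C^{2,\alpha}$ estimate comes from viewing $w^*$ as the solution of a Dirichlet problem on the smooth set $\Omega_+$ near $\Gamma$.

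\emph{Interior $C^{1,\alpha}$ regularity.} We first rewrite the variational inequality in terms of $w^*=u^*-\phi\geq 0$. Formally, $w^*$ then solves
\[
-\nabla\cdot(A\nabla w^*) = q_\phi\,\mathbf 1_{\{w^*>0\}}-\lambda^*\mathbf 1_{\Omega_0}+ \text{(contribution of }\lambda^*) .
\]
More precisely, $w^*$ is the solution of the obstacle problem with zero obstacle and right-hand side $q_\phi\in C^{0,\alpha}$, because $\phi\in C^{2,\alpha}$ and $A\in C^{1,\alpha}$. By the Lewy--Stampacchia inequality, $0\le -\nabla\cdot(A\nabla u^*)\le (q_\phi)_+$ holds in $\Omega$ with $F=0$. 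Hence $\nabla\cdot(A\nabla u^*)\in L^\infty(\Omega)$. Interior $W^{2,p}$ estimates for divergence-form operators with $C^{1,\alpha}$ coefficients then give $u^*\in W^{2,p}_{\mathrm{loc}}$ for all $p<\infty$. Morrey's embedding yields $u^*\in C^{1,\alpha}_{\mathrm{loc}}$, and the same holds for $w^*$ since $\phi\in C^{2,\alpha}$. This step is routine.

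\emph{Continuity up to $\partial\Omega$.} Since $-\nabla\cdot(A\nabla u^*)$ is a bounded function and $u^*\in H^1_0$ on a $C^{1,\alpha}$ domain, global De Giorgi--Nash (or $W^{2,p}$) estimates give $u^*\in C(\overline\Omega)$. Because $\phi\in C^{2,\alpha}(\overline\Omega)$, the same conclusion holds for $w^*$.

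\emph{Uniform one-sided $C^{2,\alpha}$ regularity.} This is the main step. On $\Omega_+$ we have $\lambda^*=0$, so $-\nabla\cdot(A\nabla w^*)=q_\phi$ classically there. On $\Gamma$ we have $w^*=0$, and since $w^*\in C^1$ attains its minimum there, also $\nabla w^*=0$. By \Cref{ass:strict-complementarity-free-boundary}, $\Gamma$ is a compact $C^{2,\alpha}$ hypersurface, and by \Cref{rem:geometry-near-regular-points} the set $\Omega_+$ lies locally on one side of it. We will apply boundary Schauder estimates for the Dirichlet problem
\[
-\nabla\cdot(A\nabla w^*)=q_\phi \quad\text{in }\Omega_+\cap B_{2r}(y), \qquad w^*=0 \quad \text{on }\Gamma\cap B_{2r}(y),
\]
for each $y\in\Gamma$. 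The coefficients are in $C^{1,\alpha}$, the right-hand side is in $C^{0,\alpha}$, the boundary portion is $C^{2,\alpha}$, and the boundary data are zero. The estimates give
\[
\|w^*\|_{C^{2,\alpha}(\overline{\Omega_+\cap B_r(y)})}\le C\bigl(\|w^*\|_{L^\infty}+\|q_\phi\|_{C^{0,\alpha}}\bigr),
\]
with $C$ depending only on the $C^{2,\alpha}$ character of $\Gamma$ near $y$ and on $\Lambda$ and $\|A\|_{C^{1,\alpha}}$. The $L^\infty$ bound on $w^*$ comes from the previous step.

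The delicate point is uniformity in $y$. We need a radius $r$ such that $\Omega_+\cap B_{2r}(y)$ is exactly the supergraph side of $\Gamma$, with no other pieces of $\Gamma$ or of $\partial\Omega$ entering the ball. We plan to obtain $r$ from compactness of $\Gamma$, which is embedded and $C^{2,\alpha}$ and hence has a positive reach. Combined with the one-sided structure of \Cref{rem:geometry-near-regular-points}, this allows a uniform choice of $r$. Compactness of $\Gamma$ in the open set $\Omega$ also gives $\dist(\Gamma,\partial\Omega)>0$, so shrinking $r$ keeps the balls inside $\Omega$. A finite cover of $\Gamma$ by such balls then yields $\rho_{\mathrm{reg}}<r$ and the uniform constant $C_{\mathrm{reg}}$ in \eqref{eq:w-star-one-sided-c2alpha-bound}.
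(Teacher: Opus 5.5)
Your proposal is correct. For the main bullet it follows the paper's route. On $\Omega_+$ the function $w^*$ solves a uniformly elliptic equation with $C^{1,\alpha}$ coefficients and $C^{0,\alpha}$ right-hand side, and it vanishes on the $C^{2,\alpha}$ portion $\Gamma$. Boundary Schauder theory plus compactness of $\Gamma$ then gives the uniform bound. Your use of the positive reach of $\Gamma$ and of $\dist(\Gamma,\partial\Omega)>0$ to fix a uniform radius spells out what the paper compresses into ``by compactness of $\Gamma$''.

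The genuine difference is in the first and third bullets. The paper cites \cite[Theorem~1.1]{andreucci2023classical} for interior $C^{1,\alpha}$ regularity. For boundary continuity it proves $u^*\geq 0$ by testing the variational inequality with $(u^*)_+$, and uses \eqref{eq:boundary-compatibility} to see that the constraint is inactive in a collar of $\partial\Omega$. It then applies boundary regularity to the homogeneous equation there. You instead derive both bullets from the Lewy--Stampacchia bound $0\leq -\nabla\cdot(A\nabla u^*)\leq (q_\phi)_+$. Calder\'on--Zygmund estimates give $W^{2,p}_{\mathrm{loc}}$ and hence $C^{1,\alpha}_{\mathrm{loc}}$, and global De Giorgi--Nash estimates on the Lipschitz domain give continuity, indeed H\"older continuity, up to $\partial\Omega$.

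Your route is more self-contained and gives slightly more. Its price is one additional tool, whose hypotheses ($q_\phi\in L^\infty$, $\phi\leq 0$ on $\partial\Omega$) do hold here. The paper's boundary argument is more elementary but exploits the homogeneous data and strict compatibility.

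There are two small corrections.
\begin{itemize}
\item On $\Omega_+$ one has $\nabla\cdot(A\nabla w^*)=q_\phi$, i.e.\ $-\nabla\cdot(A\nabla w^*)=-q_\phi$, not $+q_\phi$. This is irrelevant for regularity, but it is exactly the sign that later yields $\partial_{\nu\nu}w^*>0$.
\item A Schauder \emph{estimate} presupposes $C^{2,\alpha}$ regularity up to $\Gamma$, so that step should be phrased as boundary \emph{regularity}. One way is to apply \cite[Theorems~8.12 and~9.19]{gilbarg2001elliptic} on a $C^{2,\alpha}$ subdomain of $\Omega_+$ whose boundary contains $\Gamma\cap B_r(y)$, as the paper does. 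Your $W^{2,p}_{\mathrm{loc}}$ and $C^{1,\alpha}$ information near $\Gamma$ makes this routine.
\end{itemize}
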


\begin{proof}[Proof of \Cref{lem:regularity-u-star}]
We prove the three properties separately.

\medskip \noindent \textbf{Part 1. Interior \(C^{1,\alpha}\)-regularity.}

By \cite[Theorem 1.1]{andreucci2023classical}, we have $u^*\in C_{\mathrm{loc}}^{1,\alpha}(\Omega)$.
Since \(\phi\in C^{2,\alpha}(\overline\Omega)\), we also have $w^*=u^*-\phi \in
    C_{\mathrm{loc}}^{1,\alpha}(\Omega)$.

\medskip \noindent \textbf{Part 2. Uniform one-sided
\(C^{2,\alpha}\)-regularity.}

On the inactive set \(\Omega_+\), we have
\begin{equation*}
    \nabla\cdot(A\nabla w^*)
    =
    -\nabla\cdot(A\nabla\phi)
    =
    q_\phi
    \qquad\text{in } H^{-1}(\Omega_+),
\end{equation*}
with \(w^*=0\) on \(\Gamma\). Moreover,
\begin{equation*}
    A\in C^{1,\alpha}(\overline\Omega),
    \qquad
    q_\phi\in C^{0,\alpha}(\overline\Omega).
\end{equation*}
Since \(\Gamma\) is a compact embedded \(C^{2,\alpha}\) submanifold and
\(\Omega_+\) lies locally on one side of \(\Gamma\), the boundary regularity
theory for weak solutions, followed by the boundary Schauder estimates, yields
one-sided \(C^{2,\alpha}\)-regularity of \(w^*\) up to \(\Gamma\); see
\cite[Theorems 8.12 and 9.19]{gilbarg2001elliptic}. By compactness of
\(\Gamma\), there exist \(\rho_{\mathrm{reg}}>0\) and \(C_{\mathrm{reg}}>0\)
such that \eqref{eq:w-star-one-sided-c2alpha} and
\eqref{eq:w-star-one-sided-c2alpha-bound} hold.

\medskip \noindent \textbf{Part 3. Continuity up to the boundary.}

We first prove that \(u^*\geq 0\) a.e. in \(\Omega\).
Let
\begin{equation*}
    (u^*)_-\coloneqq \max\{-u^*,0\}.
\end{equation*}
Since \(g=0\), we have \((u^*)_-\in H_0^1(\Omega)\). Moreover,
\begin{equation*}
    v\coloneqq u^*+(u^*)_-=(u^*)_+
\end{equation*}
belongs to \(K\). Indeed, on \(\{u^*\geq0\}\) one has \(v=u^*\geq\phi\),
whereas on \(\{u^*<0\}\),
\begin{equation*}
    \phi\leq u^*<0=v.
\end{equation*}
Since \(F = 0\), the obstacle variational inequality \eqref{eq:obstacle-vi} with
this choice of \(v\) gives
\begin{equation*}
    0
    \leq
    a\bigl(u^*,v-u^*\bigr)
    =
    a\bigl(u^*,(u^*)_-\bigr)
    =
    -a\bigl((u^*)_-,(u^*)_-\bigr).
\end{equation*}
By uniform ellipticity \eqref{eq:uniform-ellipticity} of \(A\),
\begin{equation*}
    0
    \leq
    -\frac{1}{\Lambda}
    \|\nabla (u^*)_-\|_{L^2(\Omega)}^2
    \leq 0.
\end{equation*}
Consequently, \((u^*)_-=0\) a.e. in \(\Omega\), and hence
\begin{equation*}
    u^*\geq0
    \qquad\text{a.e. in }\Omega.
\end{equation*}

Since \(g=0\), the strict boundary compatibility condition
\eqref{eq:boundary-compatibility} gives
\begin{equation*}
    \trace\phi\leq -\delta_0
    \qquad\text{on }\partial\Omega.
\end{equation*}
By the continuity of \(\phi\) on \(\overline\Omega\), there exists
\(\varepsilon>0\) such that
\begin{equation*}
    \phi\leq-\frac{\delta_0}{2}
    \qquad\text{in }
    U_\varepsilon
    \coloneqq
    \left\{
        x\in\Omega \mid
        \dist(x,\partial\Omega)<\varepsilon
    \right\}.
\end{equation*}
Combining this estimate with \(u^*\geq0\), we obtain
\begin{equation*}
    u^*-\phi
    \geq
    \frac{\delta_0}{2}
    \qquad\text{a.e. in }U_\varepsilon.
\end{equation*}
Thus the obstacle constraint is uniformly inactive in \(U_\varepsilon\).
Therefore, the variational equality holds,
\begin{equation*}
    -\nabla\cdot(A\nabla u^*)=0
    \quad\text{in } H^{-1}(U_\varepsilon), \quad \text{with} \quad
    \trace u^*=0
    \quad\text{on }\partial\Omega.
\end{equation*}

Boundary regularity for weak solutions of uniformly elliptic divergence-form
equations on \(C^{1,\alpha}\) domains therefore gives continuity up to
\(\partial\Omega\); see
\cite[Corollary~8.36]{gilbarg2001elliptic}.
Together with Part 1, this yields $u^*\in C(\overline\Omega)$.
Since \(\phi\in C(\overline\Omega)\), it follows that $w^*=u^*-\phi\in C(\overline\Omega)$.

\end{proof}

\begin{proof}[Proof of \Cref{prop:strict-complementarity-free-boundary}]
Set
    $q_\phi
    \coloneqq
    -\nabla\cdot(A\nabla\phi)$.
By assumption, we have $q_\phi\geq c_\phi>0$ in $\Omega$.
We divide the proof into four parts.

\medskip \noindent \textbf{Part 1. Strict complementarity on the active set.}

The obstacle multiplier is given by
    $\lambda^*
    =
    -\nabla\cdot(A\nabla u^*)$
in the sense of distributions. On \(\intr(\Omega_0)\), we have \(u^* = \phi\) by
definition. It follows that
\begin{equation*}
    \lambda^*
    =
    -\nabla\cdot(A\nabla u^*)
    =
    -\nabla\cdot(A\nabla\phi)
    =
    q_\phi \geq c_{\phi}
    \qquad\text{a.e. on }\intr(\Omega_0).
\end{equation*}
Since \(\Gamma = \partial \Omega_0 \cap \Omega\) is of zero measure, it holds that
\begin{equation*}
    \lambda^* \geq c_{\phi} \qquad\text{a.e. on }\Omega_0.
\end{equation*}

\medskip \noindent \textbf{Part 2. One-sided tubular coordinates.}

By \Cref{ass:strict-complementarity-free-boundary}, $\Gamma$ is a compact
embedded $C^{2,\alpha}$ submanifold of dimension $d-1$, all of whose points
are regular free-boundary points. By
\Cref{rem:geometry-near-regular-points}, the active and inactive sets lie
locally on opposite sides of $\Gamma$. Hence, on each connected component
of $\Gamma$, the side occupied by $\Omega_+$ determines a consistent choice
of unit normal field. Since $\Gamma$ is a compact manifold, it has finitely
many connected components. The following construction is performed on each
component separately. For notational simplicity, we present the argument
for a single connected component and suppress the component index.

By the tubular neighborhood theorem, there exists \(\rho_1>0\) such
that, after choosing the orientation of the unit normal field \(\nu\)
to point into \(\Omega_+\), the map
\begin{equation*}
    \Psi:
    \Gamma\times(0,\rho_1)
    \to
    \mathcal N_{\rho_1}^+(\Gamma),
    \qquad
    \Psi(y,s)=y+s\nu(y),
\end{equation*}
is a \(C^{1,\alpha}\)-diffeomorphism; see
\cite[Theorem 6.24]{lee2003smooth} and \cite{li2005regularity}.
Moreover,
\begin{equation*}
    \dist(\Psi(y,s),\Gamma)=s
    \qquad
    \text{for all }
    (y,s)\in\Gamma\times(0,\rho_1).
\end{equation*}

Let $\kappa_1(y),\ldots,\kappa_{d-1}(y)$
denote the principal curvatures of \(\Gamma\) at \(y\). The Jacobian of the
normal coordinate map satisfies
\begin{equation}
    |J_\Psi(y,s)|
    =
    \prod_{i=1}^{d-1}
    |1-s\kappa_i(y)|.
\end{equation}
Since \(\Gamma\) is compact and of class \(C^{2,\alpha}\), its principal
curvatures are uniformly bounded. Hence, there exist constants \(0 <\rho_2 \leq \rho_1\) and \(C_J\geq1\) such that
\begin{equation}
    C_J^{-1}
    \leq
    |J_\Psi(y,s)|
    \leq
    C_J
    \qquad
    \forall (y,s)\in\Gamma\times(0,\rho_2).
\end{equation}

Since \(\Psi\) is a \(C^{1,\alpha}\)-diffeomorphism, composition with \(\Psi\)
preserves \(H^1\)-regularity. Hence, for every \(e\in H^1(\Omega)\),
\begin{equation*}
    e\circ\Psi
    \in
    H^1\bigl(\Gamma\times(0,\rho_2)\bigr).
\end{equation*}
In particular, for almost every \(y\in\Gamma\), the function $s\mapsto e(\Psi(y,s))$
belongs to \(H^1(0,\rho_2)\), and
\begin{equation*}
    e\circ\Psi
    \in
    L^2\bigl(\Gamma;H^1(0,\rho_2)\bigr).
\end{equation*}
The chain rule gives
\begin{equation*}
    \partial_s(e\circ\Psi)(y,s)
    =
    \nabla e(\Psi(y,s))\cdot\nu(y)
\end{equation*}
for a.e. \((y,s)\in\Gamma\times(0,\rho_2)\). Hence
\begin{equation*}
    |\partial_s(e\circ\Psi)(y,s)|
    \leq
    |\nabla e|(\Psi(y,s))
\end{equation*}
for a.e. \((y,s)\).

\medskip \noindent \textbf{Part 3. Quadratic separation.}

On \(\Omega_+\), we have
    $-\nabla\cdot(A\nabla u^*)=0$ in $\Omega_+$.
It follows that
    $\nabla\cdot(A\nabla w^*)
    =
    -\nabla\cdot(A\nabla\phi)
    =
    q_\phi$
    in $\Omega_+$.

By \eqref{eq:w-star-local-c1alpha}, the function \(w^*\) is \(C^{1,\alpha}\) in
a neighborhood of \(\Gamma\). Since $w^*\geq0$ in $\Omega$, $w^*=0$ on $\Gamma$,
every point of \(\Gamma\) is a local minimum of \(w^*\). Therefore, $\nabla w^*=0$ on $\Gamma$.

By \eqref{eq:w-star-one-sided-c2alpha}, there exists
    $0 < \rho_3 < \min(\rho_2, \rho_{\mathrm{reg}})$
such that
\begin{equation}
\label{eq:w-schauder-tube}
    w^*
    \in
    C^{2,\alpha}
    \bigl(
        \overline{\mathcal N_{\rho_3}^+(\Gamma)}
    \bigr).
\end{equation}

The following proof follows the classical tangential-normal argument from the
regularity theory of obstacle problems; see \cite{kinderlehrer1977regularity}.
Since $w^*=0$ and $\nabla w^*=0$ on $\Gamma$,
differentiating the identity \(\nabla w^*=0\) along the tangential direction
$\tau\in T_y\Gamma$, where \(T_y\Gamma\) denotes the tangent
space of \(\Gamma\) at \(y\), gives
\begin{equation*}
    D^2w^*(y)\tau=0,
    \qquad
    \tau\in T_y\Gamma.
\end{equation*}
Since \(T_y\Gamma\) has codimension one and \(D^2w^*(y)\) is symmetric,
it follows that
\begin{equation}
\label{eq:hessian-normal-form-proof}
    D^2w^*(y)
    =
    \partial_{\nu\nu}w^*(y)\,
    \nu(y)\otimes\nu(y).
\end{equation}

Using \(\nabla w^*(y)=0\) and \eqref{eq:hessian-normal-form-proof}, we deduce that
\begin{equation*}
    q_\phi(y)
    =
    \bigl(\nu(y)^\top A(y)\nu(y)\bigr)
    \partial_{\nu\nu}w^*(y), \quad \forall y \in \Gamma.
\end{equation*}
Therefore,
\begin{equation*}
    \partial_{\nu\nu}w^*(y)
    =
    \frac{q_\phi(y)}
    {\nu(y)^\top A(y)\nu(y)}.
\end{equation*}

Uniform ellipticity \eqref{eq:uniform-ellipticity} and the assumption on the obstacle \eqref{eq:strong-super-harmonicity} imply
\begin{equation}
\label{eq:bound-partial-nunu}
    \frac{c_\phi}{\Lambda}
    \leq
    \partial_{\nu\nu}w^*(y)
    \leq
    \Lambda\|q_\phi\|_{L^\infty(\Omega)}
    \qquad
    \text{for all }y\in\Gamma.
\end{equation}

By \eqref{eq:w-schauder-tube}, Taylor's theorem in the normal
direction gives
\begin{equation*}
    w^*(\Psi(y,s))
    =
    a(y)s^2+r(y,s),
\end{equation*}
where
    $a(y)
    \coloneqq
    \frac{q_\phi(y)}
    {2\,\nu(y)^\top A(y)\nu(y)}$
and by \eqref{eq:w-star-one-sided-c2alpha-bound}, there exists a constant
\(C_T>0\), independent of \(y\in\Gamma\), such that
\begin{equation*}
    |r(y,s)|
    \leq
    C_Ts^{2+\alpha}
    \qquad
    \text{for all }
    (y,s)\in\Gamma\times(0,\rho_3).
\end{equation*}

Define
\begin{equation*}
    m_0
    \coloneqq
    \frac{c_\phi}{2\Lambda},
    \qquad
    M_0
    \coloneqq
    \frac{\Lambda\|q_\phi\|_{L^\infty(\Omega)}}{2}.
\end{equation*}
Then by \eqref{eq:bound-partial-nunu},
\begin{equation*}
    0<m_0\leq a(y)\leq M_0
    \qquad
    \text{for all }y\in\Gamma.
\end{equation*}

Choose $0<\rho_4\leq\rho_3$
sufficiently small that
\begin{equation*}
    C_T\rho_4^\alpha
    \leq
    \frac{m_0}{2}.
\end{equation*}
For \((y,s)\in\Gamma\times(0,\rho_4)\), we then have
\begin{align}
    w^*(\Psi(y,s))
    &\geq
    m_0s^2-C_Ts^{2+\alpha}
    \geq
    \frac{m_0}{2}s^2,
    \nonumber \\
    w^*(\Psi(y,s))
    &\leq
    M_0s^2+C_Ts^{2+\alpha}
    \leq
    \left(M_0+\frac{m_0}{2}\right)s^2.
\end{align}
Hence, with
\begin{equation*}
    c_1
    \coloneqq
    \max\left\{
        1,\,
        \frac{2}{m_0},\,
        M_0+\frac{m_0}{2}
    \right\},
\end{equation*}
we obtain
\begin{equation*}
    c_1^{-1}s^2
    \leq
    w^*(\Psi(y,s))
    \leq
    c_1s^2
\end{equation*}
for all \((y,s)\in\Gamma\times(0,\rho_4)\).

Finally, set $\rho\coloneqq \rho_4$.

Because $\dist(\Psi(y,s),\Gamma)=s$,
the parametrized estimate is equivalent to
\begin{equation*}
    c_1^{-1}\dist(x,\Gamma)^2
    \leq
    w^*(x)
    \leq
    c_1\dist(x,\Gamma)^2
\end{equation*}
for every $x\in \mathcal{N}^+_{\rho}(\Gamma)$.

\medskip \noindent \textbf{Part 4. Separation away from the free boundary.}

By \eqref{eq:w-star-boundary-continuity}, we have
\begin{equation*}
    w^*=u^*-\phi\in C(\overline\Omega).
\end{equation*}

Fix \(\rho_0\in(0,\rho)\). Suppose, by contradiction, that no positive constant
\(c_{\rho_0}\) exists. Then there is a sequence \(\{x_n\}\subset\Omega_+\) such
that
\begin{equation*}
    \dist(x_n,\Gamma)\geq\rho_0,
    \qquad
    w^*(x_n)\to 0.
\end{equation*}
Since \(\overline{\Omega_+}\) is compact, after passing to a subsequence,
there exists \(\bar x\in\overline{\Omega_+}\) such that $x_n\to\bar x$. The continuity of \(w^*\) gives $w^*(\bar x)=0$.

The strict boundary compatibility condition \eqref{eq:boundary-compatibility}
implies
\begin{equation*}
    w^*=u^*-\phi
    =
    g-\phi
    \geq\delta_0
    \qquad\text{on }\partial\Omega.
\end{equation*}
Therefore,
    $\bar x\notin\partial\Omega$.
It follows that
    $\bar x\in \Gamma$.
But continuity of the distance function gives
\begin{equation*}
    \dist(\bar x,\Gamma)
    =
    \lim_{n\to\infty}
    \dist(x_n,\Gamma)
    \geq\rho_0,
\end{equation*}
which contradicts \(\bar x\in\Gamma\).

Therefore, there exists \(c_{\rho_0}>0\) such that
\begin{equation*}
    w^*(x)\geq c_{\rho_0}
\end{equation*}
for every
    $x\in
    \Omega_+
    \cap
    \{\dist(x,\Gamma)\geq\rho_0\}$.
This completes the proof.
\end{proof}

\section{Proof of \Cref{lem:delicate-bregman-bound}}
\label{app:delicate-bregman-bound}

\begin{proof}
Set $a=v-\phi$ and $b=u-\phi$, so $a \in [0,M_1]$, $b \in (0,M_1]$ and $a \leq b$.
For the Shannon entropy, using $\log t\ge 1-t^{-1}$, we have
\[
D_{r_{\rm sh}}(a,b)
=b-a-a\log(b/a)
\le \frac{(b-a)^2}{b},
\]
with the case $a=0$ understood by continuity.

For the Spence entropy,
\[
r_{\rm sp}''(\eta)
=
\frac{1}{1-e^{-\eta}}
\leq
\frac{M_1}{1-e^{-M_1}}\frac1\eta
=
K_{M_1}r_{\rm sh}''(\eta),
\qquad 0<\eta\leq M_1.
\]
Therefore, by the integral formula of Bregman divergence \eqref{eq:bregman-integral-formula}, we obtain
\[
D_{r_{\rm sp}}(a,b)
\leq
K_{M_1}D_{r_{\rm sh}}(a,b)
\leq
K_{M_1}\frac{(b-a)^2}{b}.
\]
with the case \(a=0\) understood by letting \(a\downarrow0\).
\end{proof}

\section{Proof of \Cref{lem:one-dim-weighted}}
\label{app:one-dim-weighted}

\begin{proof}
Set
$a\coloneqq w(0)$ and
$q\coloneqq \|w'\|_{L^2(0,\rho)}$.
If \(a=0\), the conclusion follows directly from the
one-dimensional Hardy inequality and
\[
\frac{w(s)^2}{s^2+w(s)}
\leq
\frac{w(s)^2}{s^2}.
\]
Suppose \(a>0\) and set \(\sigma\coloneqq\sqrt a\).
By extending $w$ constantly when $\rho<\sigma$, we may assume
$\sigma\le\rho$.
Since \(w\geq0\), for \(s\in(0,\rho]\),
\[
\frac{w(s)^2}{s^2+w(s)}
\leq
\min\left\{w(s),\frac{w(s)^2}{s^2}\right\},
\qquad
w(s)\leq a+s^{1/2}q.
\]
Consequently, applying Hardy's inequality to \(w-a\) and using Young's inequality, we obtain
\begin{align*}
\int_0^\rho\frac{w(s)^2}{s^2+w(s)}\diff s
&\leq
\int_0^\sigma w(s)\diff s
+
\int_\sigma^\rho\frac{w(s)^2}{s^2}\diff s\\
&\leq
a^{3/2}+\frac23a^{3/4}q
+2\int_0^\rho\frac{(w(s)-a)^2}{s^2}\diff s
+2a^2\int_\sigma^\infty s^{-2}\diff s\\
&\leq
3a^{3/2}+\frac23a^{3/4}q+8q^2\\
&\leq
\frac{10}{3}a^{3/2}+\frac{25}{3}q^2.
\end{align*}
Thus the claim
holds, for example, with \(C=25/3\).
\end{proof}

\section{Proof of \Cref{lem:trace-interpolation}}
\label{app:trace-interpolation}

\begin{proof}
Write
    $D=\displaystyle\bigsqcup_{j=1}^J D_j$,
where each \(D_j\) is a bounded connected Lipschitz domain. We prove the
estimate on each \(D_j\). Fix \(j\in\{1,\ldots,J\}\). Since $\|w\|_{H^1(D_j)}\leq\|w\|_{H^1(D)}\leq M$,
the same uniform \(H^1\)-bound is available on every component.
We divide the argument into three steps.

\medskip
\noindent\textit{Step 1: Trace of \(w^{3/2}\).}
Set \(v\coloneqq w^{3/2}\). Since \(w\in H^1(D_j)\) and \(w\geq0\),
we have \(v\in W^{1,1}(D_j)\), with
\begin{equation*}
    |\nabla v|
    =
    \frac{3}{2}\,w^{1/2}|\nabla w|
    \qquad\text{a.e. in }D_j.
\end{equation*}
By the \(W^{1,1}\) trace theorem on the Lipschitz domain \(D_j\),
there exists \(C_{1,j}=C_{1,j}(D_j)>0\) such that
\begin{equation*}
    \|v\|_{L^1(\partial D_j)}
    \leq
    C_{1,j}
    \left(
        \|v\|_{L^1(D_j)}
        +
        \|\nabla v\|_{L^1(D_j)}
    \right).
\end{equation*}
Rewriting this estimate in terms of \(w\), we obtain
\begin{equation}
\label{eq:trace-step1}
    \|w\|_{L^{3/2}(\partial D_j)}^{3/2}
    \leq
    C_{1,j}
    \left(
        \|w\|_{L^{3/2}(D_j)}^{3/2}
        +
        \frac{3}{2}
        \int_{D_j}w^{1/2}|\nabla w| \diff x
    \right).
\end{equation}

\medskip
\noindent\textit{Step 2: Interpolation of the two terms.}
For the first term, the Cauchy--Schwarz inequality gives
\begin{equation}
\label{eq:trace-interp1}
    \|w\|_{L^{3/2}(D_j)}^{3/2}
    =
    \int_{D_j}w^{1/2}w \diff x
    \leq
    \|w\|_{L^1(D_j)}^{1/2}
    \|w\|_{L^2(D_j)}.
\end{equation}
For the second term, again by the Cauchy--Schwarz inequality,
\begin{equation}
\label{eq:trace-interp2}
    \int_{D_j}w^{1/2}|\nabla w| \diff x
    \leq
    \|w\|_{L^1(D_j)}^{1/2}
    \|\nabla w\|_{L^2(D_j)}.
\end{equation}
Substituting \eqref{eq:trace-interp1} and \eqref{eq:trace-interp2} into
\eqref{eq:trace-step1}, we obtain
\begin{equation}
\label{eq:trace-step2}
    \|w\|_{L^{3/2}(\partial D_j)}^{3/2}
    \leq
    C_{2,j}\,
    \|w\|_{L^1(D_j)}^{1/2}
    \left(
        \|w\|_{L^2(D_j)}
        +
        \|\nabla w\|_{L^2(D_j)}
    \right),
\end{equation}
where \(C_{2,j}=C_{2,j}(D_j)>0\).

\medskip
\noindent\textit{Step 3: Closing the estimate.}
We bound \(\|w\|_{L^2(D_j)}\) using Poincar\'e's inequality. Define
\begin{equation*}
    \overline w_j
    \coloneqq
    \frac{1}{|D_j|}
    \int_{D_j}w \diff x.
\end{equation*}
Then
\begin{align}
    \|w\|_{L^2(D_j)}
    &\leq
    \|w-\overline w_j\|_{L^2(D_j)}
    +
    \overline w_j|D_j|^{1/2}
    \nonumber\\
    &\leq
    C_{P,j}\|\nabla w\|_{L^2(D_j)}
    +
    |D_j|^{-1/2}\|w\|_{L^1(D_j)},
\end{align}
where \(C_{P,j}\) is the Poincar\'e constant of \(D_j\). Substituting this
estimate into \eqref{eq:trace-step2} yields
\begin{equation}
\label{eq:trace-step3}
    \|w\|_{L^{3/2}(\partial D_j)}^{3/2}
    \leq
    C_{3,j}\,
    \|w\|_{L^1(D_j)}^{1/2}
    \left(
        \|\nabla w\|_{L^2(D_j)}
        +
        \|w\|_{L^1(D_j)}
    \right),
\end{equation}
for some \(C_{3,j}=C_{3,j}(D_j)>0\). We estimate the two products separately.

\medskip
\noindent\textit{First product.}
By Young's inequality,
\begin{equation}
\label{eq:trace-young}
    \|w\|_{L^1(D_j)}^{1/2}
    \|\nabla w\|_{L^2(D_j)}
    \leq
    \frac{1}{2}\|w\|_{L^1(D_j)}
    +
    \frac{1}{2}\|\nabla w\|_{L^2(D_j)}^2.
\end{equation}

\medskip
\noindent\textit{Second product.}
Using the \(H^1\)-bound,
\begin{equation*}
    \|w\|_{L^1(D_j)}
    \leq
    |D_j|^{1/2}\|w\|_{L^2(D_j)}
    \leq
    |D_j|^{1/2}\|w\|_{H^1(D_j)}
    \leq
    |D_j|^{1/2}M.
\end{equation*}
Consequently,
\begin{equation}
\label{eq:trace-smallness}
    \|w\|_{L^1(D_j)}^{3/2}
    \leq
    |D_j|^{1/4}M^{1/2}
    \|w\|_{L^1(D_j)}.
\end{equation}

Combining \eqref{eq:trace-step3}, \eqref{eq:trace-young}, and
\eqref{eq:trace-smallness}, we conclude that
\begin{equation*}
    \|w\|_{L^{3/2}(\partial D_j)}^{3/2}
    \leq
    C_j
    \left(
        \|w\|_{L^1(D_j)}
        +
        \|\nabla w\|_{L^2(D_j)}^2
    \right),
\end{equation*}
where \(C_j=C_j(D_j,M)>0\).

Finally, since \(J<\infty\), we may set
    $C\coloneqq\max_{1\leq j\leq J}C_j$.
Using
$\partial D\subseteq \displaystyle\bigcup_{j=1}^J\partial D_j$,
and summing the componentwise estimates, we obtain
\begin{align}
    \|w\|_{L^{3/2}(\partial D)}^{3/2}
    \leq
    \sum_{j=1}^J
    \|w\|_{L^{3/2}(\partial D_j)}^{3/2}
    &\leq
    C\sum_{j=1}^J
    \left(
        \|w\|_{L^1(D_j)}
        +
        \|\nabla w\|_{L^2(D_j)}^2
    \right)
    \nonumber\\
    &=
    C
    \left(
        \|w\|_{L^1(D)}
        +
        \|\nabla w\|_{L^2(D)}^2
    \right).
\end{align}
This proves the desired estimate.
\end{proof}

\section*{Acknowledgments}
The authors thank Thomas M.~Surowiec for many interesting discussions and comments on the text.
BK and NRR were supported in part by the U.S. Department of Energy, Office of Science Early Career Research Program under Award Number DE-SC0024335 and by the Center for Information Geometric Mechanics and Optimization (CIGMO), a PSAAP-IV Focused Investigatory Center funded by the U.S. Department of Energy, National Nuclear Security Administration under Award Number DE-NA0004261. BK and HQ were also supported in part by the Alfred P. Sloan Foundation via a Sloan Research Fellowship in Mathematics.

\section*{Data Availability Statement}
The authors declare that the data supporting the findings of this study are available within the paper.

\printbibliography

\end{document}